\documentclass[11pt,letterpaper]{amsart}
\usepackage{dslihead}

\title{The plectic conjecture over local fields}

\author{Siyan Daniel Li-Huerta}
\address{Max-Planck-Institut f\"ur Mathematik\\Vivatsgasse 7\\53111 Bonn, Germany}
\email{siyan@mpim-bonn.mpg.de}


\setcounter{tocdepth}{1}

\swapnumbers
\theoremstyle{plain}

\newtheorem*{thm*}{Theorem}

\newtheorem*{lem*}{Lemma}
\newtheorem{prop}[subsection]{Proposition}
\newtheorem*{prop*}{Proposition}
\newtheorem*{conj*}{Conjecture}
\newtheorem{cor}[subsection]{Corollary}
\newtheorem*{cor*}{Corollary}

\newtheorem*{thmA}{Theorem A}
\newtheorem*{thmB}{Theorem B}
\newtheorem*{thmC}{Theorem C}
\newtheorem*{thmD}{Theorem D}

\theoremstyle{definition}
\newtheorem{defn}[subsection]{Definition}
\newtheorem*{defn*}{Definition}

\theoremstyle{remark}
\newtheorem{rem}[subsection]{Remark}
\newtheorem*{rem*}{Remark}
\newtheorem*{rems*}{Remarks}

\begin{document}

\begin{abstract}
Using a mixed-characteristic incarnation of fusion, we prove an analog of Nekov\'a\v{r}--Scholl's plectic conjecture for local Shimura varieties. We apply this to obtain results on the plectic conjecture for (global) Shimura varieties after restricting to a decomposition group. Along the way, we prove a $p$-adic uniformization theorem for the basic locus of abelian type Shimura varieties at hyperspecial level, which is of independent interest.
\end{abstract}

\dedicatory{In memory of Jan Nekov\'a\v{r}}

\maketitle
\tableofcontents

\section*{Introduction}

The \emph{plectic conjecture} was formulated by Nekov\'a\v{r}--Scholl \cite{NS16} as part of their program for constructing Euler systems beyond the rank-1 case. We begin by stating their conjecture for $\ell$-adic cohomology.\footnote{Nekov\'a\v{r}--Scholl \cite{NS16} observe that plectic phenomena seem to be motivic in nature. In particular, there are also archimedean \cite{NS17, Ai18, For24} and $p$-adic \cite{FGM22, FG23, LZ25} aspects, and they should all be related. However, we will say nothing about this.} Consider a Shimura datum $(G,X)$, and suppose that $G$ is the Weil restriction $\R_{F/\bQ}H$ of a connected reductive group $H$ over a totally real field $F$ of degree $d$. Then the action of the absolute Galois group $\Ga_\bQ$ on conjugacy classes of cocharacters of $G_{\ov\bQ}$ extends to an action of the \emph{plectic Galois group} $\Ga^{\plec}_{F/\bQ}\coloneqq\Aut_F(F\otimes_\bQ\ov\bQ)$. Write $\Ga^{[\mu]}_{F/\bQ}$ for the stabilizer of the inverse Hodge cocharacter $[\mu]$ in $\Ga^{\plec}_{F/\bQ}$, and note that the reflex field $E$ is characterized by $\Ga_E=\Ga_\bQ\cap\Ga^{[\mu]}_{F/\bQ}$. For any compact open subgroup $K$ of $G(\bA_f)$, write $\ov\Sh_K(G,X)$ for the minimal compactification of the Shimura variety $\Sh_K(G,X)$ at level $K$ over $E$.
\begin{conj*}[{\cite[Conjecture 6.1]{NS16}}\footnote{In \cite{NS16}, this conjecture was stated only for compact Shimura varieties, so it instead used $R\Ga(\Sh_K(G,X)_{\ov\bQ},\ov\bQ_\ell)$. For general Shimura varieties, $\IH(\ov\Sh_K(G,X)_{\ov\bQ},\ov\bQ_\ell)$ is a natural replacement, since the Kottwitz conjecture \cite[p.~201]{Kot90} predicts that the $\Ga_E$-action on its cohomology groups extends to a $\Ga^{[\mu]}_{F/\bQ}$-action. However, we note that \cite{DS20} proves that the $\Ga_\bQ$-action on the \'etale cohomology groups of \emph{open} Hilbert modular varieties extends to a $\Ga^{\plec}_{F/\bQ}$-action.}]
The complex $\IH(\ov\Sh_K(G,X)_{\ov\bQ},\ov\bQ_\ell)$ canonically lifts from an object of $D^b(\Ga_E,\ov\bQ_\ell)$ to an object of $D^b(\Ga^{[\mu]}_{F/\bQ},\ov\bQ_\ell)$.
\end{conj*}
Note that $\Ga^{\plec}_{F/\bQ}$ is a much larger profinite group than $\Ga_\bQ$. For example, when $F$ is Galois over $\bQ$, the restriction of $\Ga_\bQ\hra\Ga^{\plec}_{F/\bQ}\cong\Ga_F^d\rtimes\fS_d$ to $\Ga_F$ equals a conjugate of the diagonal embedding $\Ga_F\hra\Ga_F^d$.
\begin{rem*}
 We now outline the conjectural relation with rank-$d$ Euler systems. (For more details, see \cite[\S1.3]{mythesis}.) The plectic conjecture lets us take derived $\Ga^{[\mu]}_{F/\bQ}$-invariants of $\IH(\ov\Sh_K(G,X)_{\ov\bQ},\ov\bQ_\ell)$, which we apply to the case of CM points and to the case of Hilbert modular varieties. The plectic conjecture is known for CM points \cite[Proposition 6.4]{NS16}, and the resulting $\Ga^{[\mu]}_{F/\bQ}$-invariants can be analyzed explicitly using complex multiplication.

  We turn to Hilbert modular varieties. The second spectral sequence of hypercohomology, along with results of Brylinski--Labesse \cite{BL84} and Nekov\'a\v{r} \cite{Nek18}, shows that after localizing at a Hilbert cusp eigenform $f$, the derived $\Ga^{[\mu]}_{F/\bQ}$-invariants equal the $d$-th wedge power of the Selmer group associated with $f$. This receives a Gysin map from the $\Ga^{[\mu]}_{F/\bQ}$-invariants for CM points, and elements in the image of this map are expected to form a rank-$d$ Kolyvagin system in the sense of Mazur--Rubin \cite{MR16}. Moreover, the nonvanishing of this rank-$d$ Kolyvagin system is expected to be related to the $d$-th derivative of the $L$-function of $f$.
\end{rem*}

We proved a function field analog of the plectic conjecture in \cite{LH21}, which crucially used fusion in a moduli space of shtukas with $d$ legs. In light of this, the plectic conjecture can be viewed as a shadow of \emph{multiple-leg phenomena} over number fields. Given Yun--Zhang's formula \cite{YZ17} on interpreting the $d$-th derivative of an $L$-function in terms of cycles on precisely such a moduli space, this perspective helps explain the conjectural relation with rank-$d$ Euler systems outlined above. 

Currently, the plectic conjecture is wide open. The goal of this paper is to prove a version of the plectic conjecture after restricting to decomposition groups. For this, we use a mixed-characteristic incarnation of fusion in an essential way.

To state our results, we need some notation. Fix a prime $p\neq\ell$, and write $F_p$ for the \'etale $\bQ_p$-algebra $F\otimes_\bQ\bQ_p$. The choice of an embedding $\ov\bQ\hra\ov\bQ_p$ lets us realize the \emph{plectic decomposition group} $\Ga^{\plec}_{F_p/\bQ_p}\coloneqq\Aut_{F_p}(F\otimes_\bQ\ov\bQ_p)$ as a closed subgroup of $\Ga^{\plec}_{F/\bQ}$. Write $\Ga^{[\mu]}_{F_p/\bQ_p}$ for the stabilizer of $[\mu]$ in $\Ga^{\plec}_{F_p/\bQ_p}$, and note that the closure $E_v$ of $E$ in $\ov\bQ_p$ is characterized by $\Ga_{E_v}=\Ga_{\bQ_p}\cap\Ga^{\plec}_{F_p/\bQ_p}$.

Our first result concerns the basic locus $\Sh_K(G,X)_{\breve{E}_v}^b$ in the rigid analytification $\Sh_K(G,X)_{\breve{E}_v}^{\an}$ of $\Sh_K(G,X)$ over $\breve{E}_v$. Fix an isomorphism $\bC\ra^\sim\bC_p$.
\begin{thmA}
Suppose that $p\neq2$, $(G,X)$ is of abelian type, and that $G_{\bQ_p}$ is unramified. Let $K^p$ be a compact open subgroup of $G(\bA_f^p)$, and let $K_p$ be a hyperspecial compact open subgroup of $G(\bQ_p)$. Then the complex $R\Ga_c(\Sh_{K_pK^p}(G,X)_{\bC_p}^b,\ov\bQ_\ell)$ canonically lifts from an object of $D^b(\Ga_{E_v},\ov\bQ_\ell)$ to an object of $D^b(\Ga_{F_p/\bQ_p}^{[\mu]},\ov\bQ_\ell)$.
\end{thmA}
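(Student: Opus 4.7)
The strategy is to apply the $p$-adic uniformization theorem promised in the abstract to reduce Theorem~A to the cohomology of a local Shimura variety, and then to invoke the local analog of the plectic conjecture (also promised in the abstract) to produce the extended Galois action.

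First, the hypotheses on $(G,X)$, $K_p$, and $p$ are precisely those under which the $p$-adic uniformization theorem should apply to the basic locus. It would produce an isomorphism of rigid analytic spaces over $\bC_p$
\[
\Sh_{K_pK^p}(G,X)^b_{\bC_p} \;\cong\; I(\bQ) \Big\backslash \Big( M_{K_p}(G,b,\mu)_{\bC_p} \times G(\bA_f^p)/K^p \Big),
\]
where $b$ is the unique basic element of $B(G,\mu)$, $M_{K_p}(G,b,\mu)$ is the associated local Shimura variety at hyperspecial level, and $I$ is the inner form of $G$ attached to $b$. Passing to compactly supported $\ov\bQ_\ell$-cohomology and unwinding the quotient then expresses $R\Ga_c(\Sh_{K_pK^p}(G,X)^b_{\bC_p},\ov\bQ_\ell)$, as an object of $D^b(\Ga_{E_v},\ov\bQ_\ell)$, as a finite direct sum indexed by $I(\bQ)\backslash G(\bA_f^p)/K^p$ of pieces, each obtained from $R\Ga_c(M_{K_p}(G,b,\mu)_{\bC_p},\ov\bQ_\ell)$ by taking invariants under a finite subgroup of $I(\bQ)$.

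Next I would invoke the local analog of the plectic conjecture. Because $G=\R_{F/\bQ}H$, we have $G(\bQ_p)=H(F_p)=\prod_{v\mid p}H(F_v)$, and the local Shimura datum $(G,b,\mu)$ accordingly decomposes plectically as a tuple of local Shimura data for $H$ indexed by the places of $F$ above $p$. The local plectic theorem---proved via the mixed-characteristic incarnation of fusion obtained by moving legs along the Fargues--Fontaine curve---would canonically lift $R\Ga_c(M_{K_p}(G,b,\mu)_{\bC_p},\ov\bQ_\ell)$ from $D^b(\Ga_{E_v},\ov\bQ_\ell)$ to $D^b(\Ga^{[\mu]}_{F_p/\bQ_p},\ov\bQ_\ell)$. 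The $I(\bQ)$-action in the uniformization is built from the action on $G(\bA_f^p)/K^p$ together with a geometric automorphism of the local Shimura variety preserving its integral structure, both of which commute with the plectic Galois action. Hence the plectic lift descends through the $I(\bQ)$-quotient and assembles into the desired lift on the global side.

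I expect the main obstacle to be establishing the $p$-adic uniformization of the basic locus for abelian-type Shimura varieties at hyperspecial level. In the PEL case this is classical Rapoport--Zink theory; handling the Hodge-type case should go through Kisin's integral models together with Howard--Pappas-style arguments, and the further extension to abelian type requires care with connected components, which is presumably where the hypothesis $p\neq 2$ enters. Once the uniformization is in place, the compatibility between the plectic action on the local cohomology and the $I(\bQ)$-quotient structure should be essentially formal, since the plectic Galois action and the $I(\bQ)$-action touch disjoint coordinates of the uniformization datum.
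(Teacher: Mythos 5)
Your proposal follows essentially the same route as the paper: $p$-adic uniformization of the basic locus reduces the problem to the cohomology of the associated local Shimura variety, and the local plectic theorem (Theorem~C) provides the lift, with the $J_b(\bQ_p)$-equivariance built into Theorem~C doing the work of making the lift descend through the $G'(\bQ)$-quotient. The paper phrases the unwinding of the quotient as $R\Ga\bigl(G'(\bQ),\, R\Ga_c(\breve\cM,\La)\otimes^\bL_\La C_c(G(\bA_f^p)/K^p,\La)\bigr)$ rather than a finite direct sum over double cosets, but this is the same content once one notes the action is properly discontinuous (and for general $K^p$ one passes through sufficiently small level and takes invariants).

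There is, however, one genuine gap in the passage you labeled as ``essentially formal.'' Theorem~C produces a canonical lift to $D^b\bigl(J_b(\bQ_p)\times W^{[\mu]}_{F_p/\bQ_p},\ov\bQ_\ell\bigr)$ --- a \emph{Weil} group, not a Galois group. Your argument invokes it as if it directly produces an object of $D^b(\Ga^{[\mu]}_{F_p/\bQ_p},\ov\bQ_\ell)$, but the Weil group is only dense in the Galois group, and a Weil-equivariant complex does not automatically extend. The paper bridges this by observing (via \cite[(3.5.17)]{Hub96}) that the underlying complex of $R\Ga_c(\Sh^b,\La)$ is perfect, and a perfect complex with a continuous action of the relevant Weil-plectic group lies in the full subcategory $D^b(\Ga^{[\mu]}_{F_p/\bQ_p},\La)$. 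Without this step your lift lives only over the Weil version of the plectic group.

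Two smaller points: since $F_p = F\otimes_\bQ\bQ_p$ is a product of fields rather than a single field, you are implicitly using an extension of Theorem~C to finite \'etale $\bQ_p$-algebras --- this requires a K\"unneth argument on the plectic diagram (Remark~\ref{rem:fetalgebras} in the paper). And for the uniformization input, the paper's Appendix proceeds by deducing the abelian type case from Kim's results in the unramified Hodge type case via a theory of connected components (Kisin, Pappas--Rapoport, Chen--Kisin--Viehmann), rather than Howard--Pappas; your guess about where $p\neq 2$ enters is correct in spirit.
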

Our next result applies in certain ramified cases, when $\Sh_K(G,X)^{\an}_{\breve{E}_v}$ admits a $p$-adic uniformization by the Drinfeld tower. (For what this means, see \ref{ss:Drinfeldcase}. In particular, it implies that $\Sh_K(G,X)$ is projective over $E$.) Here, we obtain a result for all of $\Sh_K(G,X)$.
\begin{thmB}
Suppose that $\Sh_K(G,X)^{\an}_{\breve{E}_v}$ admits a $p$-adic uniformization by the Drinfeld tower as in \ref{ss:Drinfeldcase}. Let $K^p$ be a compact open subgroup of $G(\bA_f^p)$, and let $K_p$ be a maximal compact open subgroup of $G(\bQ_p)$. Then the complex $R\Ga(\Sh_K(G,X)_{\ov\bQ_p},\ov\bQ_\ell)$ canonically lifts from an object of $D^b(\Ga_{E_v},\ov\bQ_\ell)$ to an object of $D^b(\Ga_{F_p/\bQ_p}^{[\mu]},\ov\bQ_\ell)$.
\end{thmB}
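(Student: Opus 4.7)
My approach is to combine the $p$-adic uniformization hypothesis with the plectic enhancement of cohomology of local Shimura varieties that is established earlier in the paper.

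Since the Drinfeld uniformization hypothesis forces $\Sh_K(G,X)$ to be projective over $E$, the natural map $R\Ga_c\to R\Ga$ is an isomorphism, and it suffices to produce the canonical lift on compactly supported cohomology. I would then invoke the $p$-adic uniformization, which provides an isomorphism of rigid analytic spaces over $\breve{E}_v$ of the form
\[
\Sh_K(G,X)^{\an}_{\breve{E}_v}\;\cong\;I(\bQ)\backslash\bigl(M_{K_p}\times G(\bA_f^p)/K^p\bigr),
\]
equipped with a Weil descent datum to $E_v$, where $I$ is the relevant inner form of $G$ (anisotropic modulo center at $\infty$ and locally identified with the local datum at $p$) and $M_{K_p}$ denotes the level-$K_p$ Drinfeld local Shimura variety. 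Passing to $\ov\bQ_p$-cohomology and using the discreteness of $I(\bQ)$ inside $J_b(\bQ_p)\times G(\bA_f^p)$, this description expresses $R\Ga_c(\Sh_K(G,X)_{\ov\bQ_p},\ov\bQ_\ell)$ as a finite direct sum of complexes of the form $R\Ga_c(\Delta\backslash M_{K_p,\ov\bQ_p},\ov\bQ_\ell)$, where $\Delta$ runs over arithmetic subgroups of $I(\bQ)$ that act on $M_{K_p,\ov\bQ_p}$ via $I(\bQ)\to J_b(\bQ_p)$.

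By the local plectic theorem applied to the Drinfeld tower, $R\Ga_c(M_{K_p,\ov\bQ_p},\ov\bQ_\ell)$ canonically lifts from $D^b(\Ga_{E_v},\ov\bQ_\ell)$ to $D^b(\Ga_{F_p/\bQ_p}^{[\mu]},\ov\bQ_\ell)$. A key feature of this lift, built into its fusion-theoretic construction on the relevant $v$-stack of local shtukas, should be $J_b(\bQ_p)$-equivariance. Since the $\Delta$-action on $M_{K_p,\ov\bQ_p}$ factors through $J_b(\bQ_p)$, the plectic structure will commute with taking $\Delta$-quotients and thereby transfer to a canonical $\Ga_{F_p/\bQ_p}^{[\mu]}$-enhancement of each summand, yielding the desired lift for the global cohomology.

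The main obstacle I anticipate is verifying that the $J_b(\bQ_p)$-equivariance of the local plectic enhancement is compatible with the Weil descent datum furnished by the uniformization. While functoriality of the fusion construction in automorphisms of the local datum should give $J_b(\bQ_p)$-equivariance essentially for free, one still has to trace through how the Frobenius twist encoded in the Weil descent interacts with the plectic action, and confirm that restriction along $\Ga_{E_v}\subset\Ga_{F_p/\bQ_p}^{[\mu]}$ recovers the a priori $\Ga_{E_v}$-structure on the global cohomology. This reconciliation will require nontrivial bookkeeping with the inertia and Frobenius contributions on both sides.
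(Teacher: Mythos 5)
Your approach is essentially the same as the paper's: combine Varshavsky's uniformization (cited in the paper as Theorem~\ref{ss:Varshavskyuniformization}) with the $J_b(\bQ_p)$-equivariant plectic lift of the cohomology of the Drinfeld tower (Theorem~\ref{ss:maintheorem} / Corollary~\ref{ss:TheoremC} together with Remark~\ref{rem:fetalgebras}.e). Whether you phrase the global cohomology as the group cohomology $R\Ga(G'(\bQ), R\Ga_c(\breve\cM_{G_{\bQ_p},b,\mu,K_p},\La)\otimes^{\bL}_\La C_c(G(\bA_f^p)/K^p,\La))$ as the paper does, or as a finite direct sum of $R\Ga_c(\Delta\backslash M_{K_p})$ over arithmetic subgroups as you do, is a matter of bookkeeping; both devolve from the uniformization and properly discontinuous action via the K\"unneth formula.

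However, there is one genuine gap that you do not address: the local plectic theorem (Theorem C of the paper and its extension to finite \'etale $\bQ_p$-algebras in Remark~\ref{rem:fetalgebras}.e) furnishes the lift only on the level of the \emph{Weil} group $W^{[\mu]}_{F_p/\bQ_p}$, not the \emph{Galois} group $\Ga^{[\mu]}_{F_p/\bQ_p}$, because the output lives in $D_{\mathrm{lis}}([*/\ul{G_b(\bQ_p)}],\La)^{B\prod_{l,j}W^{\de_{l,j}}_{F_{l,j}}\rtimes\fS_{\de_{l,j}}}$. Theorem~B claims a lift to $D^b(\Ga^{[\mu]}_{F_p/\bQ_p},\La)$, so one must upgrade the Weil-equivariance to Galois-equivariance. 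The paper's proof does this by observing that the resulting complex is a perfect complex of $\La$-modules (it computes the cohomology of a proper algebraic variety), whence any $W^{[\mu]}_{F_p/\bQ_p}$-structure automatically extends continuously to $\Ga^{[\mu]}_{F_p/\bQ_p}$. Without some such argument your conclusion overshoots what the local theorem actually provides. On the other hand, the obstacle you flag about reconciling $J_b(\bQ_p)$-equivariance with the Weil descent datum is a non-issue in this formulation: both are encoded simultaneously in the statement that $f_{K\natural}(\prescript{\prime}{}{\IC}_\mu)$ lives in $D_{\mathrm{lis}}([*/\ul{G_b(\bQ_p)}],\La)^{BW_{E_v}}$, and the Weil descent datum on $\breve\cM_{G,b,\mu,K}$ is precisely what gives rise to the $W_{E_v}$-action. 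Finally, your argument is tacitly restricted to $K=K_pK^p$ with $K^p$ sufficiently small; the paper handles general $K$ by first proving the sufficiently-small product-level case and then taking derived invariants along a finite Galois cover $\Sh_{K_pK^p}\ra\Sh_K$.
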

We deduce Theorem A and Theorem B from the following analogous result for local Shimura varieties. Let $(G,b,\mu)$ be a local Shimura datum over $\bQ_p$, and suppose that $G$ is the Weil restriction $\R_{F_u/\bQ_p}H$ of a connected reductive group $H$ over a finite extension $F_u$ of $\bQ_p$. Form Weil group versions $W_{F_u/\bQ_p}^{[\mu]}\subseteq W_{F_u/\bQ_p}^{\plec}$ of the plectic Galois groups from before, and note that the reflex field $E_v$ is characterized by $W_{E_v}=W_{\bQ_p}\cap W^{[\mu]}_{F_u/\bQ_p}$. For any compact open subgroup $K$ of $G(\bQ_p)$, write $\breve\cM_{G,b,\mu,K}$ for the associated local Shimura variety at level $K$. Recall that $\breve\cM_{G,b,\mu,K}$ has an action of $J_b(\bQ_p)$.\footnote{In the body of the paper, we will use the notation $G_b$ instead of the usual $J_b$.}
\begin{thmC}
The complex $R\Ga_c(\breve\cM_{G,b,\mu,K,\bC_p},\ov\bQ_\ell)$ lifts canonically from an object of $D^b(J_b(\bQ_p)\times W_E,\ov\bQ_\ell)$ to an object of $D^b(J_b(\bQ_p)\times W^{[\mu]}_{F/\bQ_p},\ov\bQ_\ell)$.
\end{thmC}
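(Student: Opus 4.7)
The plan is to realize the cohomology using a multi-leg moduli of $H$-shtukas and to deploy mixed-characteristic fusion to obtain the plectic action.

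First, using the identification $G=\R_{F/\bQ_p}H$, I would invoke the Weil-restriction equivalence between $G$-bundles on the relative Fargues--Fontaine curve $X_S$ over $\bQ_p$ and $H$-bundles on its base change $X_{S,F}$ to $F$. Under this equivalence, a single leg on $X_S$ with meromorphy $\mu$ corresponds to $d=[F:\bQ_p]$ ordered legs on $X_{S,F}$, indexed by embeddings $\tau\colon F\hra\ov\bQ_p$, with meromorphies given by the components $\mu_\tau$ of $\mu$ under $G_{\ov\bQ_p}=\prod_\tau H_{\ov\bQ_p}$; likewise $b\in B(G)=B(H)_F$ breaks up over the embeddings. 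This identifies $\breve\cM_{G,b,\mu,K}$ over $\Spd\breve{E}$ with the pullback, along an appropriate morphism $\Spd\breve{E}\to\prod_\tau\Spd\breve{F}_\tau$ induced by the $d$ embeddings, of a moduli space of $H$-shtukas carrying one leg at each $\tau$ of meromorphy $\mu_\tau$.

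Next, I would apply the Fargues--Scholze fusion machinery: by the geometric Satake equivalence over $B^+_{\mathrm{dR}}$ and the resulting factorization structure on the Beilinson--Drinfeld Grassmannian over $\prod_\tau\Spd\breve{F}_\tau$, the $\ov\bQ_\ell$-cohomology of the multi-leg moduli carries a canonical action of $\prod_\tau W_F$, with each leg contributing a $W_F$-factor via smooth base change. In addition, any permutation of the embeddings preserving the tuple $(\mu_\tau)_\tau$ up to isomorphism acts canonically by permuting the legs. Together these assemble into an action of $W^{[\mu]}_{F/\bQ_p}$, since by definition this is the stabilizer of $[\mu]$ inside the plectic Weil group $W^{\plec}_{F/\bQ_p}$. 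Pulling back along the morphism from the first step produces the desired lift of $R\Ga_c(\breve\cM_{G,b,\mu,K,\bC_p},\ov\bQ_\ell)$.

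I would then check compatibility: the restriction of the $W^{[\mu]}_{F/\bQ_p}$-action to $W_E=W_{\bQ_p}\cap W^{[\mu]}_{F/\bQ_p}$ recovers the standard Weil group action, because $W_E$ sits inside $\prod_\tau W_F$ diagonally (combined with its permutation action on the embeddings) in precisely the manner matching the action of $W_E$ on $\Spd\breve{E}$ through its embeddings of $F$. Commutativity with the $J_b(\bQ_p)$-action is automatic, as $J_b(\bQ_p)$ acts on the shtuka data without interacting with the base and therefore commutes with all the fusion and permutation operations.

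The principal obstacle will be rigorously implementing the fusion and factorization structure for multi-leg $H$-shtukas in mixed characteristic---in particular, verifying that pullback along $\Spd\breve{E}\to\prod_\tau\Spd\breve{F}_\tau$ recovers the one-leg cohomology together with the full $W^{[\mu]}_{F/\bQ_p}$-action and not merely the $W_E$-subaction. This will require a careful deployment of Fargues--Scholze's $v$-sheaf six-functor formalism together with a precise multi-leg version of geometric Satake over $B^+_{\mathrm{dR}}$, including control of the nearby cycles and monodromy behavior as distinct legs collide along the diagonal.
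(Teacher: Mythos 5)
Your high-level blueprint (Weil restriction turns one $G$-leg into $d$ $H$-legs, fusion gives a plectic action) matches the paper's strategy, but there is a genuine gap in how you propose to implement the pullback, and it is not a routine technicality.

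You assert that a relative Cartier divisor $D$ on $X_{S,\bQ_p}$ corresponds to ``$d$ ordered legs on $X_{S,F}$, indexed by embeddings $\tau\colon F\hra\ov\bQ_p$,'' and that $\breve\cM_{G,b,\mu,K}$ is the pullback of an ordered multi-leg moduli space along a morphism $\Spd\breve{E}\to\prod_\tau\Spd\breve{F}_\tau$. This is the crux of the problem: the preimage $m^{-1}(D)$ under $m\colon X_{S,F}\to X_{S,\bQ_p}$ is a degree-$d$ divisor on $X_{S,F}$, not a canonically ordered $d$-tuple of points. Any splitting of $m^{-1}(D)$ into $d$ labeled degree-one legs is a choice, and the ambiguity is exactly a torsor under $\fS_d$. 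Consequently there is no natural morphism $\Div^1_{\bQ_p}\to(\Div^1_F)^d$, and your ``pull back and read off a $\prod_\tau W_F$-action'' step does not produce a canonical action. The paper's twisted diagonal $m^{-1}\colon\Div^1_{\bQ_p}\to[(\Div^1_F)^d/\fS_d]$ maps into the \emph{symmetrized} target precisely because of this, and the whole machinery of symmetrized local Hecke stacks, symmetrized Satake sheaves $\IC^{(d)I}_{\mu_\bullet}$, and the descended homology object $\mathrm{H}^{(d)I}_{\mu_\bullet}$ in \S\ref{s:symmetrizedSatakesheaves}--\S\ref{s:plectic} exists to repair it: one must first prove that the relative homology of the ordered $d$-leg moduli space descends along $(\Div^1_F)^d\to(\Div^1_F)^d/\fS_d$ as a \emph{local system on the stacky quotient}, which rests on the universal local acyclicity of the Satake sheaf (Fargues--Scholze's Proposition IX.3.2) together with the fact that the image of $m^{-1}$ lands in the free locus $\Div^{d,\circ}_F$ where sheafy and stacky quotients agree.

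Two smaller but related omissions: (i) you treat $W^{[\mu]}_{F/\bQ_p}$ informally as ``$\prod_\tau W_F$ plus permutations preserving $(\mu_\tau)$,'' but the proof needs the precise normal form $\prod_j W_{F_j}^{\de_j}\rtimes\fS_{\de_j}$ from the plectic-reflex analysis (Proposition \ref{ss:plecticreflexgroup}) so that it can be matched to the \'etale fundamental group of $\prod_j[(\Div^1_{F_j})^{\de_j}/\fS_{\de_j}]$ and so that $\Div^1_E$ is identified as a clopen piece of the fiber product (Proposition \ref{ss:divpullbackreflex}); and (ii) at the end you must still convert ``$R f_{K\natural}$'' data back into $R\Ga_c$ with the correct Tate twist, for which $d_\mu=\sum_j d_{\mu_j}\cdot\#\de_j$ shows the twist itself is plectically equivariant (Corollary \ref{ss:TheoremC}). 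You correctly flag symmetrization as the ``principal obstacle,'' but the fix is not a more careful implementation of fusion over $\prod_\tau\Spd\breve{F}_\tau$: it requires replacing the ordered target by the symmetrized one from the start and then proving a nontrivial descent statement for the cohomology sheaf.
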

\begin{rems*}\hfill
  \begin{enumerate}[(1)]
  \item Our methods apply more generally to the intersection homology complexes of \emph{moduli spaces of local shtukas} over any nonarchimedean local field. See Theorem \ref{ss:maintheorem}.
  \item Theorem A, Theorem B, and Theorem C also hold with integral coefficients. More precisely, we can replace $\ov\bQ_\ell$ with the ring of integers $\La$ of a finite extension of $\bQ_\ell$ containing $\sqrt{q}$. See Theorem \ref{ss:unramifiedabeliandecompositionplectic}, Theorem \ref{ss:Varshavskydecompositionplectic}, and Corollary \ref{ss:TheoremC}.
  \item A very similar approach was considered by Tamiozzo \cite{Tam19}. For example, he amazingly constructs a version of the plectic diagram (\ref{eq:plectic}) below for \emph{Hilbert modular surfaces} localized at a prime $p$ splitting in $F$, so that the twisted diagonal morphism (\ref{eq:twisteddiagonal}) below is replaced with the actual diagonal morphism $\Spd\bQ_p\ra(\Spd\bQ_p)^2$. While our plectic diagram involves local rather than global Shimura varieties, we can deduce the expected consequences for $\ell$-adic cohomology.
  \end{enumerate}
\end{rems*}
Let us sketch the proof of Theorem C. For simplicity, assume that $\mu$ is defined over $\bQ_p$, which implies that $W^{[\mu]}_{F_u/\bQ_p}=W^{\plec}_{F_u/\bQ_p}$. Write $d$ for $[F_u:\bQ_p]$. How can we canonically construct an action of $W^{\plec}_{F_u/\bQ_p}\cong W^d_{F_u}\rtimes\fS_d$ on the level of complexes? According to an adaption of Nekov\'a\v{r}--Scholl's vision \cite[(1.3)]{NS16} to local Shimura varieties, we want to find a Cartesian \emph{plectic diagram}
\begin{align*}
  \xymatrix{\breve\cM_{G,b,\mu,K}\ar[d]\ar@{-->}[r] & \text{``}\breve\cM_{G,b,\mu,K}^{\plec}\text{''}\ar@{-->}[d] \\
  \Spa\breve\bQ_p\ar@{-->}[r] & \text{``}(\Spa\breve\bQ_p)^{\plec}\text{''},
}
\end{align*}
where $\ov\bQ_\ell$-sheaves on $\text{``}(\Spa\breve\bQ_p)^{\plec}\text{''}$ yield representations of $W^{\plec}_{F_u/\bQ_p}$ over $\ov\bQ_\ell$.

After passing to Scholze's category of diamonds \cite{Sch17}, our proof begins along these lines. First, we use $\Spd\breve\bQ_p/\phi_{\bQ_p}$ instead of $\Spa\breve\bQ_p$, since this conveniently incorporates the Weil descent datum on $\breve{\cM}_{G,b,\mu,K}$, and also $\ov\bQ_\ell$-sheaves on $\Spd\breve\bQ_p/\phi_{\bQ_p}$ yield representations of $W_{\bQ_p}$ over $\ov\bQ_\ell$.

Next, we introduce a \emph{twisted diagonal morphism}
\begin{align}\label{eq:twisteddiagonal}
m^{-1}:\Spd\breve\bQ_p/\phi_{\bQ_p}\ra(\Spd\breve{F}_u/\phi_{F_u})^d/\fS_d,\tag{$\wr$}
\end{align}
as well as a \emph{symmetrized} version $\Sht^{(d)}_{H,b,\mu,K}\ra(\Spd\breve{F}_u/\phi_{F_u})^d/\fS_d$ of the moduli space of mixed-characteristic shtukas. Then, we use these to construct a Cartesian diagram
\begin{align}\label{eq:plectic}
\begin{split}
  \xymatrix{\Sht_{G,b,\mu,K}\ar[r]\ar[d] & \Sht^{(d)}_{H,b,\mu,K}\ar[d]\\
\Spd\breve\bQ_p/\phi_{\bQ_p}\ar[r]^-{m^{-1}} & (\Spd\breve{F}/\phi_F)^d/\fS_d,
  }
\end{split}\tag{$\Box$}
\end{align}
where $\Sht_{G,b,\mu,K}$ is the Weil descent of $\breve\cM_{G,b,\mu,K}$. This diagram says that $\Sht_{G,b,\mu,K}$ is obtained from restricting $\Sht^{(d)}_{H,b,\mu,K}$ along a (twisted) diagonal morphism, so we regard it as a mixed-characteristic incarnation of fusion.

We want to apply the plectic diagram to a ``relative $\IC$ sheaf'' on $\Sht^{(d)}_{H,b,\mu,K}$ over $(\Spd\breve{F}/\phi_F)^d/\fS_d$, except there is no general theory of $\IC$ sheaves for diamonds yet. To circumvent this, we define the desired sheaf by hand, and then we use results of Fargues--Scholze \cite{FS21} to prove that it is well-behaved.

Another problem is that not all $\ov\bQ_\ell$-sheaves on $(\Spd\breve{F}/\phi_F)^d/\fS_d$ yield representations of $W^{\plec}_{F/\bQ_p}$ over $\ov\bQ_\ell$. In order to prove that our sheaf (really, complex) does, we construct another Cartesian diagram
\begin{align*}
  \xymatrix{\Sht_{H,b,\mu,K}^{(d)}\ar[d] & \ar[l]\Sht^d_{H,b,\mu_\bullet,K}\ar[d]\\
(\Spd\breve{F}/\phi_F)^d/\fS_d &\ar[l](\Spd\breve{F}/\phi_F)^d.}
\end{align*}
Here, $\mu_\bullet$ is a $d$-tuple of conjugacy classes of cocharacters of $H_{\ov\bQ_p}$ derived from $\mu$, and $\Sht_{H,b,\mu_\bullet,K}^d\ra(\Spd\breve{F}/\phi_F)^d$ is the usual moduli space of mixed-characteristic shtukas with $d$ legs for $H$ over $F$. Using Fargues--Scholze's smoothness result for the intersection homology of $\Sht^d_{H,b,\mu_\bullet,K}$ \cite[Proposition IX.3.2]{FS21}, along with the fact that the image of $m^{-1}$ lies in the locus where $(\Spd\breve{F}/\phi_F)^d/\fS_d$ agrees with the stacky quotient $[(\Spd\breve{F}/\phi_F)^d/\fS_d]$, we conclude the proof of Theorem C.
\begin{rem*}
To prove Theorem A and Theorem B, we also need a generalization of Theorem C to finite \'etale $\bQ_p$-algebras $F$. See Remark \ref{rem:fetalgebras}.
\end{rem*}
With Theorem C in hand, let us return to the global context and sketch the proofs of Theorem A and Theorem B. Both proofs appeal to $p$-adic uniformization theorems for the basic locus of $\Sh_K(G,X)$. In the situation of Theorem B, the basic locus consists of all of $\Sh_K(G,X)$, and the relevant theorem is due to Rapoport--Zink \cite{RZ96} and Varshavsky \cite{Var98}. In the situation of Theorem A, we prove the following result.
\begin{thmD}
  Suppose that $p\neq2$ and $(G,X)$ is of abelian type. Let $K^p$ be a compact open subgroup of $G(\bA_f^p)$, and let $K_p$ be a hyperspecial compact open subgroup of $G(\bQ_p)$. We have an isomorphism
  \begin{align*}
    G'(\bQ)\bs(\breve\fM_{G,b,\mu_{\bC_p}}\times\ul{G(\bA_f^p)/K^p})\ra^\sim\fS\fh_{K_pK^p}(G,X)^b
  \end{align*}
  of formal schemes over $\breve\cO_{E_v}$ that is compatible with the Weil descent datum and varying $K^p$.
\end{thmD}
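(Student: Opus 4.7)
The plan is to establish Theorem D by reducing from abelian type to Hodge type, and then invoking the $p$-adic uniformization theorem in the Hodge type setting.

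First, I would prove the Hodge type case. For a Hodge type datum $(G_1, X_1)$ with hyperspecial $K_{1,p}$ and $p \neq 2$, Kisin's integral canonical model exists, and the formal completion of its reduction along the basic locus has been uniformized at hyperspecial level (building on Rapoport--Zink for PEL type, Howard--Pappas for GSpin, and extended by work of Kim, Zhou, and Hamacher). In this situation one constructs a morphism
\begin{align*}
\Theta_1: G_1'(\bQ)\bs(\breve\fM_{G_1,b_1,\mu_1,\bC_p}\times\ul{G_1(\bA_f^p)/K_1^p})\ra\fS\fh_{K_{1,p}K_1^p}(G_1,X_1)^{b_1}
\end{align*}
by sending a pair $(x,g)$ to the triple consisting of the abelian variety read off from $x$, its tensors, and a prime-to-$p$ level structure given by $g$, and then shows $\Theta_1$ is an isomorphism of formal schemes over $\breve\cO_{E_v}$ compatible with Weil descent and $K_1^p$. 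The argument proceeds by checking that $\Theta_1$ is a bijection on $\bar\bF_p$-points (which rests on Dieudonn\'e-theoretic classification of basic isogeny classes, due to Kisin's work on the Langlands--Rapoport conjecture) and then comparing the formal completions on both sides via Serre--Tate.

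Second, I would pass from Hodge to abelian type. Given $(G,X)$ of abelian type, choose an auxiliary Hodge type datum $(G_1,X_1)$ together with a central isogeny $G_1^{\der} \to G^{\der}$ inducing an isomorphism of connected Shimura data. Kisin's construction expresses $\fS\fh_{K_pK^p}(G,X)$ as a quotient of disjoint translates of connected components of $\fS\fh_{K_{1,p}K_1^p}(G_1,X_1)$ by an action of an arithmetic group built from the center. Since the basic Newton stratum is preserved under both taking connected components and the relevant finite quotients, the uniformization $\Theta_1$ descends to give a candidate morphism $\Theta$ for $(G,X)$. On the local side, the analogous operations relate $\breve\fM_{G_1,b_1,\mu_1}$ to $\breve\fM_{G,b,\mu}$ via the central isogeny, using functoriality of the local moduli spaces $\breve\fM_{-,-,-}$ under morphisms of local Shimura data.

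The main obstacle is the second step: precisely matching, under the passage from Hodge to abelian type, the various groups and quotients appearing on the two sides of the desired isomorphism. Concretely, one must verify that (i) the inner form $G'$ associated to $(G,b)$ is obtained from $G_1'$ by the same $Z$-twist that relates $G$ to $G_1$; (ii) the action of $G'(\bQ)$ on $\breve\fM_{G,b,\mu_{\bC_p}}\times\ul{G(\bA_f^p)/K^p}$ is compatible with the action of $G_1'(\bQ)$ on the Hodge-type analogue modulo the auxiliary quotient; and (iii) the Weil descent data, as well as naturality in $K^p$, are preserved throughout. Hyperspecial level plays a crucial role here, as it ensures that the relevant central subgroups intersect $K_p$ in a controlled way and that the integral models on both sides are compatible. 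The hypothesis $p \neq 2$ enters through Kisin's and Kim's constructions of the integral canonical models in the Hodge type case.
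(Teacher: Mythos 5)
Your high-level strategy matches the paper's: prove the Hodge type case (which the paper outsources to Kim via its Theorem \ref{ss:Kimspaces} and \cite[Theorem 4.7]{Kim18b}), then descend to abelian type using an auxiliary Hodge type datum with an isogeny on derived groups. However, the part you flag as ``the main obstacle'' --- precisely matching the groups, quotients, and Weil descent data on the two sides of the isomorphism under the Hodge-to-abelian passage --- is in fact the entire content of the appendix, and you do not explain how to resolve it. The difficulty is not merely bookkeeping: Kisin's theory gives a description of $\fS\fh_{G(\bZ_p)}(G,X)$ as $\ul{\sA_p(G)}\times^{\ul{\sA_p(G_3)^\circ}}\fS\fh_{G_3(\bZ_p)}(G_3,X_3)^{+}$, but for the descent to go through you need a \emph{parallel} description of the source $\breve\fU(G,X)$, and this does not come for free from ``functoriality of $\breve\fM_{-,-,-}$ under morphisms of local Shimura data.''

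Concretely, what is missing is the theory developed in \ref{ss:connectedcomponentsmap}--\ref{ss:mainequality}: one must (a) construct a connected components morphism $\fc_G:\breve\fU(G,X)\ra\ul{\sA_p(G)^\circ\bs\sA_p(G)}$ using the description of $\pi_0(\breve\fM_{G,b,\mu})\cong\pi_1(G)^\vp\cong G^{\simplyc}(\bQ_p)\bs G(\bQ_p)/G(\bZ_p)$ (Proposition \ref{ss:integrallocalshimuravarietiesconnectedcomponents} via Chen--Kisin--Viehmann and Gleason, and Lemma \ref{ss:pi1invariants}); (b) prove that the fiber $\breve\fU(G,X)^+$ depends only on $G^{\der}$ and $X^+$, which uses Pappas--Rapoport's ad-isomorphism theorem \cite[Theorem 5.1.2]{PR22} for the local side together with Kisin's \cite[(3.8.2)]{Kis17} on the special fiber; and (c) obtain the $\ul{\sA_p(G)}\times\Phi$-equivariant formula $\breve\fU(G,X)\cong\ul{\sA_p(G)}\times^{\ul{\sA_p(G_3)^\circ}}\breve\fU(G_3,X_3)^+$ (Theorem \ref{ss:mainequality}). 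Only once both sides have been reconstructed from their respective $^+$-pieces by the \emph{same} induction from $\sA_p(G_3)^\circ$ to $\sA_p(G)$ can the Hodge type isomorphism be glued to give the abelian type one, using the commutative square from \cite[(3.6.10)]{Kis17}. Your items (i)--(iii) are therefore not verifications that can be dispensed with by appealing to the hyperspecial hypothesis; they are where the argument actually lives.
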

Here, $\breve\fM_{G,b,\mu_{\bC_p}}$ denotes the associated integral local Shimura variety over $\breve\cO_{E_v}$ as in \cite[Theorem 2.5.4]{PR22}, $\fS\fh_{K_pK^p}(G,X)^b$ denotes the completion of the integral canonical model $\sS_{K_pK^p}(G,X)_{\breve\cO_{E_v}}$ as in \cite[(3.4.14)]{Kis10} along the basic locus as in \cite[p.~183]{SZ17}, and $G'$ denotes the canonical inner form of $G$ over $\bQ$ associated with $(G,X)$ as in \cite[Proposition 3.1]{Han21}.
\begin{rem*}
  If $(G,X)$ is of Hodge type, then $G$ is never of the form $\R_{F/\bQ}H$ unless $F=\bQ$. To see this, note that the $\bQ$-split rank of $Z_G=\R_{F/\bQ}Z_H$ equals the $F$-split rank of $Z_H$, while the $\bR$-split rank of $Z_G$ equals the sum of the $\bR$-split ranks of $Z_H$ for all embeddings $F\ra\bR$. In particular, this is at least $d$ times the $F$-split rank of $Z_H$. But the $\bQ$- and $\bR$-split ranks of $Z_G$ coincide when $(G,X)$ is of Hodge type, which forces $d=1$.

Consequently, when considering the plectic conjecture as stated, we must work beyond the Hodge type case. Alternatively, one could instead formulate a modification of the plectic conjecture along the lines of \cite{Leo19}, which includes some Hodge type cases, although we do not pursue this here.
\end{rem*}

We prove Theorem D by developing a theory of connected components for basic $p$-adic uniformization at hyperspecial level. This involves interweaving work of Kisin \cite{Kis17} over the special fiber with results of Pappas--Rapoport \cite{PR22}. We then apply this theory to deduce Theorem D from Kim's results \cite{Kim18} in the Hodge type case, concluding the proof of Theorem A.

\subsection*{Outline}
In \S\ref{s:FFcurves}, we recall facts about the Fargues--Fontaine curve, as well as introduce the twisted diagonal morphism (\ref{eq:twisteddiagonal}). In \S\ref{s:symmetrizedSatakesheaves}, we construct symmetrized Satake sheaves and prove that they are well-behaved. In \S\ref{s:localshtukas}, we introduce symmetrized moduli space of local shtukas. In \S\ref{s:plectic}, we construct the plectic diagram (\ref{eq:plectic}) and prove Theorem C. In \S\ref{s:Shimuravarieties}, we apply our results to Shimura varieties and prove Theorem A and Theorem B. Finally, in Appendix \ref{s:appendix} we prove Theorem D.

\subsection*{Acknowledgements}
The author thanks Mark Kisin for his advice about Shimura varieties and their connected components. The author would also like to thank David Hansen and Michael Rapoport for helpful discussions, and to thank the referee for their comments and suggestions.

During the revision of this work, the author was a visitor of the Max Planck Institute for Mathematics. The author thanks MPIM for its hospitality and excellent working conditions.

\section{Fargues--Fontaine curves}\label{s:FFcurves}
In this section, we begin by recalling some facts about the (relative) Fargues--Fontaine curve. Next, we introduce a \emph{twisted diagonal morphism} (\ref{eq:twisteddiagonal}) for divisors on the Fargues--Fontaine curve, which is essential for our main results. Finally, we recall the moduli of bundles on the Fargues--Fontaine curve and describe how it behaves under Weil restriction.

We freely use definitions from perfectoid geometry as in \cite{Sch17} and \cite{FS21}. Unless otherwise specified, we work on the v-site of affinoid perfectoid spaces over $\ov\bF_q$. Note that this is equivalent to considering v-stacks over $\Spd\ov\bF_q$.

\subsection{}
Let $Q$ be a nonarchimedean local field, and fix an algebraic closure $\ov{Q}$ of $Q$. Choose a uniformizer $\pi$ of $Q$, and write $\bF_q$ for $\cO_Q/\pi$. For any affinoid perfectoid space $S=\Spa(R,R^+)$ over $\ov\bF_q$ with pseudouniformizer $\vpi$, write $Y_{S,Q}$ for the sousperfectoid adic space
\begin{align*}
\Spa W_{\cO_Q}(R^+)\ssm \{\pi[\vpi]=0\}
\end{align*}
\cite[Proposition II.1.1]{FS21}. We get a morphism $Y_{S,Q}\ra\Spa\breve{Q}$, where $\breve{Q}$ denotes $W_{\cO_Q}(\ov\bF_q)[\textstyle\frac1\pi]$. Fix an embedding $\breve{Q}\ra\wh{\ov{Q}}$ over $Q$. Write $\vp=\vp_Q$ for the automorphism of $Y_{S,Q}$ induced by the lift of $q$-Frobenius on $W_{\cO_Q}(R^+)$, and finally write $X_{S,Q}$ for the quotient $Y_{S,Q}/\vp^\bZ$, which has a morphism $X_{S,Q}\ra\Spa{Q}$.

\subsection{}\label{ss:relativeCartierdivisors}
Write $\phi:\Spd\breve{Q}\ra\Spd\breve{Q}$ for the geometric $q$-Frobenius automorphism over $\Spd\ov\bF_q$. Note that $\phi$ acts freely by \cite[Proposition II.1.16]{FS21} and \cite[Proposition II.1.18]{FS21}. In particular, the sheaf-theoretic quotient $\Spd\breve{Q}/\phi^\bZ$ coincides with the stack-theoretic quotient.

Write $W_Q$ for the absolute Weil group of $Q$ with respect to $\ov{Q}$. For any topological space $Z$, write $\ul{Z}$ for the associated v-sheaf as in \cite[p.~52]{Sch17}, and note that we have a morphism $\Spd\breve{Q}/\phi^\bZ\ra[*/\ul{W_Q}]$ corresponding to the $\ul{W_Q}$-bundle $\Spd{\wh{\ov{Q}}}\ra\Spd\breve{Q}/\phi^\bZ$.

Recall that $S$-points of $\Spd\breve{Q}/\phi^\bZ$ correspond bijectively to closed relative Cartier divisors of $X_{S,Q}$ with degree $1$, where ``closed'' refers to the associated ideals being closed subsets of the structure sheaf when evaluating on affinoids \cite[Proposition 5.3.8]{SW20}. Write $\Div^1_{Q}$ for $\Spd\breve{Q}/\phi^\bZ$, and more generally, write $\Div^d_Q$ for the sheaf-theoretic quotient $(\Div^1_{Q})^d/\fS_d$. Then $S$-points of $\Div^d_Q$ indeed correspond bijectively to closed relative Cartier divisors of $X_{S,Q}$ with degree $d$ \cite[Proposition II.3.6]{FS21}.

\subsection{}\label{ss:FFcurvefieldextension}
The following description of how the Fargues--Fontaine curve depends on the base field plays a crucial role in defining our twisted diagonal morphism, so we explain it in detail. Let $F$ be a degree $d$ finite separable extension of $Q$. Write $F_0$ for the maximal unramified subextension of $Q$ in $F$, and write $r$ for $[F_0:Q]$.
\begin{prop*}
  We have a natural Cartesian square
\begin{align*}
  \xymatrix{X_{S,F}\ar[r]^-m\ar[d] & X_{S,Q}\ar[d]\\
  \Spa{F}\ar[r] & \Spa Q}
\end{align*}
compatible with base change in $S$.
\end{prop*}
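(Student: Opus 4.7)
The plan is to factor $F/Q$ as $Q\subset F_0\subset F$, with $F_0/Q$ unramified of degree $r$ and $F/F_0$ totally ramified of degree $e=d/r$, and handle the two cases by an explicit computation on the rings $W_{\cO_Q}(R^+)$. Compatibility with base change in $S$ is immediate from the functoriality of $R^+\mapsto W_{\cO_Q}(R^+)$, so it suffices to work over a fixed affinoid perfectoid $S=\Spa(R,R^+)$.

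For the unramified step, I would exploit that $R^+$ contains $\ov\bF_q\supset\bF_{q^r}=\cO_{F_0}/\pi$, so Teichm\"uller supplies an embedding $\cO_{F_0}=W_{\cO_Q}(\bF_{q^r})\hookrightarrow W_{\cO_Q}(R^+)$. Combined with the standard splitting $\cO_{F_0}\otimes_{\cO_Q}\cO_{F_0}\cong\prod_{\sigma\in\mathrm{Gal}(F_0/Q)}\cO_{F_0}$, this produces
\[
W_{\cO_Q}(R^+)\otimes_{\cO_Q}\cO_{F_0}\;\cong\;\prod_{\sigma\in\mathrm{Gal}(F_0/Q)}W_{\cO_{F_0}}(R^+),
\]
where I identify $W_{\cO_Q}(R^+)$ as an $\cO_{F_0}$-algebra with $W_{\cO_{F_0}}(R^+)$ using the uniqueness of a $\pi$-adically complete, $\pi$-torsion-free $\cO_{F_0}$-lift of a perfect $\bF_{q^r}$-algebra. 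Removing $\{\pi[\vpi]=0\}$ from each factor then identifies $Y_{S,Q}\times_{\Spa Q}\Spa F_0$ with $r$ copies of $Y_{S,F_0}$, and the Frobenius $\vp_Q\otimes\id$ cyclically permutes these $r$ factors (combined with $\vp_Q$ on each). Since a single $\vp_Q^\bZ$-orbit meets each factor exactly once, the quotient $(Y_{S,Q}\times_{\Spa Q}\Spa F_0)/\vp_Q^\bZ$ collapses to one copy of $Y_{S,F_0}/\vp_{F_0}^\bZ=X_{S,F_0}$, where $\vp_Q^r=\vp_{F_0}$ is the residual Frobenius. This yields the square in the unramified case.

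For the totally ramified step $F/F_0$, I would write $\cO_F=\cO_{F_0}[T]/P(T)$ for an Eisenstein polynomial $P$ with root a uniformizer $\varpi$, and verify via the universal property that
\[
W_{\cO_F}(R^+)\;\cong\;W_{\cO_{F_0}}(R^+)\otimes_{\cO_{F_0}}\cO_F,
\]
namely that the right-hand side is $\varpi$-adically complete, $\varpi$-torsion-free over $\cO_F$, and reduces to $R^+$ mod $\varpi$. Because $\pi$ and $\varpi^e$ differ by a unit of $\cO_F$, the deleted loci $\{\pi[\vpi]=0\}$ and $\{\varpi[\vpi]=0\}$ coincide, so $Y_{S,F}\cong Y_{S,F_0}\times_{\Spa F_0}\Spa F$; and since $\vp_F$ is $\cO_F$-linear and lifts $q^r$-Frobenius on $R^+$, it agrees with $\vp_{F_0}\otimes\id$. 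Passing to $\vp_F^\bZ$-quotients yields $X_{S,F}\cong X_{S,F_0}\times_{\Spa F_0}\Spa F$, and composing with the unramified case produces the Cartesian square.

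The main obstacle is the Frobenius bookkeeping in the unramified step: because the Teichm\"uller-lifted Galois action on the second $\cO_{F_0}$-factor coincides with powers of $\vp_Q$ on the first, the cyclic permutation of the $r$ factors by $\vp_Q\otimes\id$ is twisted by an internal $\vp_Q$ on each factor. One must verify carefully that this twist conspires to produce exactly $\vp_{F_0}$ on the surviving factor after collapsing the $\vp_Q^\bZ$-orbit, rather than a spurious Frobenius-twisted variant. The rest---the inclusion $W_{\cO_Q}(R^+)\hookrightarrow W_{\cO_F}(R^+)$ defining the map $m$, the cancellation of deleted loci, and naturality in $S$---is formal.
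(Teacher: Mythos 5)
Your proof is correct and takes essentially the same approach as the paper's: both factor $F/Q$ through the maximal unramified subextension $F_0$, identify $Y_{S,Q}\times_{\Spa Q}\Spa F$ with $r$ copies of $Y_{S,F}$, observe that $\vp_Q$ cyclically permutes these copies with $\vp_Q^r$ acting as $\vp_F$ on each, and quotient by $\vp^\bZ$. You make explicit the ramified-Witt-vector base-change compatibilities ($W_{\cO_{F_0}}(R^+)$ via uniqueness of the $\cO_{F_0}$-lift, $W_{\cO_F}(R^+)\cong W_{\cO_{F_0}}(R^+)\otimes_{\cO_{F_0}}\cO_F$ via the universal property, coincidence of the deleted loci) that the paper treats as known when it asserts the Cartesian square $\coprod_\io Y_{S,F}\to\coprod_\io Y_{S,Q}\to Y_{S,Q}$ over $\Spa F\to\Spa F_0\to\Spa Q$.
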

\begin{proof}
Note that we have Cartesian squares
\begin{align*}
  \xymatrix{\coprod_\io\Spa\breve{F}\ar[r]\ar[d] & \coprod_\io\Spa\breve{Q}\ar[r]\ar[d] & \Spa\breve{Q}\ar[d]\\
  \Spa{F}\ar[r] & \Spa{F_0}\ar[r] & \Spa{Q},}
\end{align*}
where $\io$ runs over the set $\Hom_Q(F_0,\breve{Q})$ of cardinality $r$. The base change of $\vp:\Spa\breve{Q}\ra\Spa\breve{Q}$ to $\coprod_\io\Spa\breve{Q}$ is induced by the automorphism $\prod_\io\breve{Q}\ra^\sim\textstyle\prod_\io\breve{Q}$ given by $(a_\io)_\io\mapsto(\vp(a_{\vp^{-1}\circ\io}))_\io$, and we have a similar description for the base change of $\vp$ to $\coprod_\io\Spa\breve{F}$. After further base changing to $Y_{S,Q}$, we get Cartesian squares
\begin{align*}
  \xymatrix{\coprod_\io Y_{S,F}\ar[r]\ar[d] & \coprod_\io Y_{S,Q}\ar[r]\ar[d] & Y_{S,Q}\ar[d]\\
  \Spa{F}\ar[r] & \Spa{F_0}\ar[r] & \Spa{Q}}
\end{align*}
along with an analogous description of the base change of $\vp:Y_{S,Q}\ra Y_{S,Q}$ to $\coprod_\io Y_{S,F}$. From this, we see that $\vp$ cyclically permutes the $r$ copies of $Y_{S,F}$, with $\vp^r$ acting on each copy as the automorphism induced by the lift of absolute $q^r$-Frobenius on $Y_{S,F}$. Hence quotienting the top row by $\vp^\bZ$ yields the desired Cartesian square.
\end{proof}

\subsection{}\label{ss:mapsfromfetcovers}
To define our twisted diagonal morphism, we will need the following v-stack version of \cite[Theorem (5.1)]{Ryd11}. Write $[d]$ for the finite set $\{1,\dotsc,d\}$.
\begin{lem*}
Let $Z$ be a v-stack. Then the prestack whose $S$-points parametrize finite \'etale morphisms $T\ra S$ of degree $d$ along with a morphism $T\ra Z$ is naturally isomorphic to the stack-theoretic quotient $[Z^d/\fS_d]$.
\end{lem*}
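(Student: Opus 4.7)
The plan is to exhibit mutually inverse morphisms between the prestack $\mathcal{P}$ of the lemma and $[Z^d/\fS_d]$, exploiting the fact that finite \'etale morphisms of degree $d$ are v-locally trivial.

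First I would construct a morphism $Z^d \to \mathcal{P}$ by sending $(z_1,\dotsc,z_d) \in Z^d(S)$ to the tautological pair $(T \to S,\, f \colon T \to Z)$ with $T = \coprod_{i \in [d]} S$ and $f|_{S_i} = z_i$. A permutation $\sigma \in \fS_d$ sends this data to a visibly isomorphic object of $\mathcal{P}(S)$ via the relabelling automorphism of the components of $T$, endowing the morphism with a canonical $\fS_d$-equivariant structure (trivial on the target) and hence descending to a morphism $\Phi \colon [Z^d/\fS_d] \to \mathcal{P}$.

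For the inverse, given $(T \to S,\, f \colon T \to Z) \in \mathcal{P}(S)$, I would pass to a v-cover $S' \to S$ over which $T \times_S S'$ splits as $\coprod_{i \in [d]} S'$; such an $S'$ exists because the Galois closure of $T \to S$ is a finite \'etale, and hence v-, cover of $S$ that trivializes $T$. A choice of trivialization then yields a morphism $S' \to Z^d$ by pairing $f$ with the $d$ tautological sections of $T \times_S S' \to S'$, and two trivializations differ by a section of $\ul{\fS_d}$, so the composite $S' \to [Z^d/\fS_d]$ is canonically independent of the trivialization. Comparing the two pullbacks to $S' \times_S S'$ shows that this morphism descends to the desired $\Psi \colon \mathcal{P} \to [Z^d/\fS_d]$, and the compositions $\Phi \circ \Psi$ and $\Psi \circ \Phi$ are canonically the identity after unwinding the construction over a trivializing v-cover.

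The main obstacle is the descent step for $\Psi$: one must verify that the construction on $S'$ is canonically independent of the trivialization (precisely absorbed by the quotient by $\fS_d$) and glues over $S' \times_S S'$. This mirrors Rydh's original argument for algebraic stacks, and the translation to the v-site is smoothed by the fact that finite \'etale covers of affinoid perfectoid spaces are v-covers whose geometric fibers have constant cardinality $d$; given this, the verification reduces to standard functoriality of the stack-theoretic quotient $[Z^d/\fS_d]$.
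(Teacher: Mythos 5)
Your argument is correct and amounts to the same construction as the paper's: the paper identifies $S$-points of $[Z^d/\fS_d]$ with pairs consisting of an $\fS_d$-bundle $M$ on $S$ and an $\fS_d$-equivariant morphism $M\to Z^d$, passing between such data and a finite \'etale $T\to S$ of degree $d$ via $M=\ul{\Iso}(\ul{[d]},T)$ and $T=M\times^{\fS_d}\ul{[d]}$, whereas you carry out the equivalent descent by hand over an explicit trivializing v-cover. The $\fS_d$-bundle packaging is slightly slicker since it absorbs the independence-of-trivialization and cocycle checks you flag as the main obstacle, but the underlying reasoning is identical.
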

\begin{proof}
Any such $T\ra S$ is \'etale-locally isomorphic to $\ul{[d]}_S$ on the target. As the v-topology is subcanonical \cite[Theorem 8.7]{Sch17}, this yields an $\fS_d$-bundle $\ul{\Iso}(\ul{[d]},T)$ on $S$, and the morphism $T\ra Z$ induces an $\fS_d$-equivariant morphism $\ul{\Iso}(\ul{[d]},T)\ra Z^d$.

Conversely, let $M$ be an $\fS_d$-bundle on $S$ equipped with an $\fS_d$-equivariant morphism $M\ra Z^d$. Then $M\times^{\fS_d}\ul{[d]}$ is v-locally isomorphic to $\ul{[d]}_S$ over $S$, so it is finite \'etale of degree $d$ \cite[Proposition 9.7]{Sch17}. Postcomposing $M\ra Z^d$ with projections yields a morphism $M\times\ul{[d]}\ra Z$, and the $\fS_d$-equivariance of $M\ra Z^d$ indicates that this induces a morphism $M\times^{\fS_d}\ul{[d]}\ra Z$.
\end{proof}

\subsection{}\label{ss:divpullback}
Maintain the notation of \ref{ss:FFcurvefieldextension}. Write $(\Div^1_F)^{d,\circ}\subseteq(\Div^1_F)^d$ for the complement of all diagonals in $(\Div^1_F)^d$, which is an open subsheaf since $\Div^1_F$ is separated \cite[Proposition II.1.21]{FS21}. The symmetric group $\fS_d$ acts freely on $(\Div^1_F)^{d,\circ}$, so the sheaf-theoretic quotient $\Div^{d,\circ}_F\coloneqq(\Div^1_F)^{d,\circ}/\fS_d$ coincides with the stack-theoretic quotient. Finally, write $e$ for $[F:F_0]$.

We now define our twisted diagonal morphism for divisors, which is essential for our main results.
\begin{prop*}
  We have a natural closed embedding
  \begin{align*}
    m^{-1}:\Div^1_Q\ra[(\Div^1_F)^d/\fS_d].
  \end{align*}
  Its image lies in $\Div^{d,\circ}_F$, and the composition $\Div^1_Q\ra^{m^{-1}}[(\Div^1_F)^d/\fS_d]\ra\Div^d_F$ sends a closed relative Cartier divisor $D$ on $X_{S,Q}$ to its preimage $m^{-1}(D)$ under $m:X_{S,F}\ra X_{S,Q}$.
\end{prop*}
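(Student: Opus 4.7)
The plan is to apply Lemma \ref{ss:mapsfromfetcovers} with $Z=\Div^1_F$: an $S$-point of $[(\Div^1_F)^d/\fS_d]$ is a finite \'etale degree $d$ cover $T\ra S$ equipped with a morphism $T\ra\Div^1_F$. Given $D\in\Div^1_Q(S)$, represented by a degree 1 relative Cartier divisor $D\subseteq X_{S,Q}$, set $T\coloneqq m^{-1}(D)\subseteq X_{S,F}$. By Proposition \ref{ss:FFcurvefieldextension}, the morphism $m$ is the base change of the finite \'etale degree $d$ morphism $\Spa F\ra\Spa Q$, so $T\ra D$ is finite \'etale of degree $d$; composing with the canonical isomorphism of diamonds $D\cong S$ gives the desired cover $T\ra S$. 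Since $T$ is finite \'etale over the perfectoid space $D$, it is itself perfectoid over $F$ with tilt $T^\flat$, and the closed immersion $T\hra X_{T^\flat,F}$ exhibits $T$ as a degree 1 Cartier divisor, producing the morphism $T^\flat\ra\Div^1_F$. Functoriality in $S$ is immediate.

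The composition and image claims follow by unwinding Lemma \ref{ss:mapsfromfetcovers}. The forgetful map $[(\Div^1_F)^d/\fS_d]\ra\Div^d_F$ sends a pair $(T\ra S,\,T\ra\Div^1_F)$ to the degree $d$ Cartier divisor on $X_{S,F}$ given by the image of the untilt section $T\ra X_{T^\flat,F}\ra X_{S,F}$; in our construction this image is precisely $m^{-1}(D)\subseteq X_{S,F}$. Moreover, as $T\ra S$ is \'etale and not merely flat, the $d$ constituent degree 1 divisors are pairwise disjoint, so the image of $m^{-1}$ lies in the open substack $\Div^{d,\circ}_F$.

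The closed embedding property is the main obstacle. Injectivity on $S$-points is immediate: $D$ is recovered as the image of $T$ under the finite surjective morphism $m$. For closedness of the image, I plan to argue v-locally on $S$. After a v-cover trivializing $T\ra S$ as $\bigsqcup_{i=1}^d S$, an $S$-point of $\Div^{d,\circ}_F$ amounts to a tuple of pairwise disjoint degree 1 divisors on $X_{S,F}$, equivalently untilts $S_i^\sharp$ of $S$ over $F$ modulo $\phi_F$. Such a tuple lies in the image of $m^{-1}$ precisely when there exists a common untilt $S^\sharp$ of $S$ over $Q$ with $S_i^\sharp\cong S^\sharp\otimes_{Q,\sigma_i}F$ for the $d$ embeddings $\sigma_i\colon F\hra\wh{\ov{Q}}$, equivalently when the local equations cutting out the $S_i^\sharp$ in $X_{S,F}$ jointly descend along $m$ to a single degree 1 equation in $X_{S,Q}$. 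This is a closed condition, and the technical point will be to ensure the v-local analysis is $\fS_d$-equivariant and descends cleanly to yield closedness on the stack quotient $[(\Div^1_F)^d/\fS_d]$.
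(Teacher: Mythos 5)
Your construction of $m^{-1}$ via Lemma \ref{ss:mapsfromfetcovers} is the right idea and is essentially the paper's approach, though the paper packages it more cleanly: it first observes that quotienting the finite \'etale map $\Spd\breve{F}\ra\Spd\breve{Q}$ (finite \'etale of degree $e$ by \cite[Lemma 15.6]{Sch17}) by suitable Frobenius powers yields a finite \'etale morphism $m:\Div^1_F\ra\Div^1_Q$ of degree $d$, and then defines $m^{-1}$ by sending $S\ra\Div^1_Q$ to the pullback of $m$ along it. This is the same datum you produce (your $T^\flat$ is $\Div^1_F\times_{\Div^1_Q}S$ under the identifications of \ref{ss:relativeCartierdivisors} and Proposition \ref{ss:FFcurvefieldextension}), but avoids the tilting gymnastics. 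Your identification of the composition with $D\mapsto m^{-1}(D)$, your argument that \'etaleness forces the image into $\Div^{d,\circ}_F$, and your injectivity observation all match the paper.

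The genuine gap is the closed-embedding claim, and you flag it yourself. Your plan to express membership in the image as a ``joint descent of local equations'' condition and to argue it is closed v-locally is both unproven and, as stated, not obviously well-posed: closedness of the image is not the same as being a closed immersion of v-stacks (one needs the map to be a proper monomorphism, i.e.\ representable fiber products that are closed subspaces, not merely a closed set-theoretic image), and turning a descent condition into a Zariski-closed locus on a v-cover is exactly the kind of thing that tends to require real work. The paper sidesteps all of this with a soft argument: both $\Div^1_Q$ and $[(\Div^1_F)^d/\fS_d]$ are proper over $\Spd\ov\bF_q$ by \cite[Proposition II.1.21]{FS21} (the latter because $(\Div^1_F)^d$ is and the quotient by a finite group preserves properness), so $m^{-1}$ is proper by cancellation, and a proper injection is a closed embedding. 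You should replace your final paragraph with this properness argument; as written your proof does not establish the closed-embedding claim.
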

By abuse of notation, we also write $m^{-1}$ for $\Div^1_Q\ra[(\Div^1_F)^d/\fS_d]\ra\Div^d_F$.
\begin{proof}
By \cite[Lemma 15.6]{Sch17}, $\Spd\breve{F}\ra\Spd\breve{Q}$ is finite \'etale of degree $e$, so quotienting by appropriate powers of $\phi$ yields a finite \'etale morphism $m:\Div^1_F\ra\Div^1_Q$ of degree $d$. By Lemma \ref{ss:mapsfromfetcovers}, we get a morphism as desired via sending $S\ra\Div^1_Q$ to the data given by the Cartesian square
\begin{align*}
\xymatrix{T\ar[r]\ar[d] & \Div^1_F\ar[d]^-m \\
S\ar[r] & \Div^1_Q.}
\end{align*}
Unraveling the identifications in \ref{ss:relativeCartierdivisors} and the proof of Proposition \ref{ss:FFcurvefieldextension} yields our description of $\Div^1_Q\ra^{m^{-1}}[(\Div^1_F)^d/\fS_d]\ra\Div^d_F$. From here, the fact that $m$ is finite \'etale shows that the image of $m^{-1}$ lies in $\Div^{d,\circ}_F$. Therefore $m^{-1}$ is injective, and as its target and source are proper \cite[Proposition II.1.21]{FS21}, $m^{-1}$ is also proper and hence a closed embedding.
\end{proof}

\subsection{}\label{ss:BunG}
Let us recall some facts about bundles on the Fargues--Fontaine curve. Let $G$ be a connected reductive group over $Q$, and write $\Bun_G$ for the Artin v-stack over $\Spd\ov\bF_q$ whose $S$-points parametrize $G$-bundles on $X_{S,Q}$ \cite[Theorem IV.1.19]{FS21}.

For any $b$ in $B(G)$, write $G_b$ for the associated connected reductive group over $Q$ as in \cite[proposition (1.12)]{RZ96}\footnote{While \cite{RZ96} only treats $p$-adic $Q$, the proof immediately adapts to any $Q$. Also, we use the notation $G_b$ instead of the usual $J_b$ to emphasize its dependence on $G$.}, write $\cE_b$ for the associated $G$-bundle on $X_{S,Q}$ as in \cite[p.~89]{FS21}, and write $\wt{G}_b$ for the v-sheaf $\ul{\Aut}(\cE_b)$. We have a natural morphism $\ul{G_b(Q)}\ra\wt{G}_b$, which is an isomorphism if and only if $b$ is basic \cite[Proposition III.5.1]{FS21}.

The map $B(G)\ra|\Bun_G|$ given by $b\mapsto\cE_b$ is bijective \cite[Theorem III.2.2]{FS21}, and its inverse is continuous for the order topology on $B(G)$ by \cite[Theorem III.2.3]{FS21} and \cite[Theorem III.2.7]{FS21}. For any $b$ in $B(G)$, the resulting locally closed substack of $\Bun_G$ is naturally isomorphic to $[*/\wt{G}_b]$ \cite[Proposition III.5.3]{FS21}.

\begin{rem*}
The map $B(G)\ra|\Bun_G|$ is even a homeomorphism \cite[Theorem 1.1]{Vie21}, but we will not need this.
\end{rem*}

\subsection{}\label{ss:Weilrestrictionbundles}
Bundles satisfy the following compatibility with Weil restriction. Maintain the notation of \ref{ss:divpullback}. Let $H$ be a connected reductive group over $F$, and take $G$ to be the Weil restriction $\R_{F/Q}H$. Let $U$ be an adic space over $X_{S,Q}$, and assume that the fiber product $m^{-1}(U)\coloneqq U\times_{X_{S,Q}}X_{S,F}$ is also an adic space. Proposition \ref{ss:FFcurvefieldextension} implies that $G_U$ is naturally isomorphic to the Weil restriction $\R_{m^{-1}(U)/U}(H_U)$.
\begin{prop*}
The groupoid of $G$-bundles on $U$ is naturally equivalent to the groupoid of $H$-bundles on $m^{-1}(U)$.
\end{prop*}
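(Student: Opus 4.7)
The plan is to exhibit quasi-inverse functors, using that by Proposition~\ref{ss:FFcurvefieldextension} and its proof, $m\colon X_{S,F}\to X_{S,Q}$ is finite \'etale of degree $d$, hence so is $m^{-1}(U)\to U$. In one direction, I would define a functor $\Psi$ from $H$-bundles on $m^{-1}(U)$ to $G$-bundles on $U$ by Weil restriction of torsors: for an $H$-bundle $\cF$ on $m^{-1}(U)$, let $\Psi(\cF)$ be the sheaf on $U$ whose $V$-points, for $V\to U$, are the sections of $\cF$ over $m^{-1}(V)\coloneqq V\times_U m^{-1}(U)$. The universal property of Weil restriction, combined with the identification $G_U\cong\R_{m^{-1}(U)/U}(H_{m^{-1}(U)})$ recalled in the statement, equips $\Psi(\cF)$ with a canonical $G_U$-action.

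In the other direction, given a $G$-bundle $\cE$ on $U$, I would define $\Phi(\cE)$ as the contracted product
\[
\Phi(\cE)\coloneqq\cE|_{m^{-1}(U)}\times^{G_{m^{-1}(U)},\varepsilon}H_{m^{-1}(U)},
\]
where $\varepsilon\colon G_{m^{-1}(U)}\to H_{m^{-1}(U)}$ is the counit of the Weil restriction adjunction applied to $H_{m^{-1}(U)}$. This is an $H$-bundle on $m^{-1}(U)$ by the usual change-of-structure-group construction.

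Finally, I would check that $\Phi$ and $\Psi$ are quasi-inverse and that $\Psi(\cF)$ is genuinely $G$-torsorial, not merely a $G$-equivariant sheaf. Both statements are v-local on $U$, and since $m^{-1}(U)\to U$ is finite \'etale and hence a v-cover, we may pass to a v-cover of $U$ over which it splits as $\coprod_{i=1}^d U$. In this trivialized situation $G_U\cong\prod_{i=1}^d H_U$, and the equivalence reduces to the tautology that a $\prod_i H_U$-bundle on $U$ is the same data as a $d$-tuple of $H_U$-bundles on $U$. The main obstacle is essentially formal bookkeeping: the analogous statement is classical over schemes, and we only need to verify that it descends correctly in the v-sheaf framework of \cite{FS21}. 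Since every step of the construction is compatible with arbitrary base change and $m$ is a v-cover, the verification reduces to the split case above, and the essential ingredient is just the identification $G_U\cong\R_{m^{-1}(U)/U}(H_{m^{-1}(U)})$ coming from Proposition~\ref{ss:FFcurvefieldextension}.
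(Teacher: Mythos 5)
Your two functors $\Phi$ and $\Psi$ are precisely the paper's two constructions: $\Phi(\cE)$ is the pushforward $\ve_*(\cE|_{m^{-1}(U)})$ along the counit (which is what the contracted product by $H_{m^{-1}(U)}$ along $\varepsilon$ means), and $\Psi$ is the Weil restriction $\R_{m^{-1}(U)/U}$. So the approach is the same; the only genuine addition in your write-up is the explicit verification that the two constructions are quasi-inverse and produce honest torsors, carried out by passing to a finite \'etale cover of $U$ over which $m^{-1}(U)$ splits as $\coprod_{i=1}^d U$ (so that $G$ trivializes to $\prod_{i=1}^d H$), whereas the paper records only the key structural fact needed — that Weil restriction preserves finite products, hence sends $H$-torsors to $G$-torsors — and leaves the rest implicit. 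Your extra check is correct and harmless; it is just spelled out more than the paper found necessary.
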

\begin{proof}
  Let $\cG$ be a $G$-bundle on $U$, and write $\ve:G_{m^{-1}(U)}\ra H_{m^{-1}(U)}$ for the counit of the base change-Weil restriction adjunction. Because $\ve$ is a morphism of group adic spaces over $m^{-1}(U)$, we can form the $H$-bundle $\ve_*(\cG_{m^{-1}(U)})$ on $m^{-1}(U)$.

  Conversely, let $\cH$ be an $H$-bundle on $m^{-1}(U)$. Since Weil restrictions preserve finite products, we see that $\R_{m^{-1}(U)/U}\cH$ is a $G$-bundle on $U$.
\end{proof}

\begin{cor}\label{cor:geometrizeShapiro}
We have a natural isomorphism $c:\Bun_G\ra^\sim\Bun_H$ compatible with the bijection $B(G)\ra^\sim B(H)$ from \cite[1.10]{Kot85}\footnote{In \cite{Kot85}, Kottwitz only considers $p$-adic $Q$. However, everything works for general $Q$.}.
\end{cor}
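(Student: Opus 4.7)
The plan is to deduce the corollary directly from Proposition \ref{ss:Weilrestrictionbundles} by taking $U = X_{S,Q}$ for every affinoid perfectoid $S$ over $\ov\bF_q$, since in that case $m^{-1}(U) = X_{S,F}$ is an adic space. This gives, functorially in $S$, an equivalence between the groupoid of $G$-bundles on $X_{S,Q}$ and the groupoid of $H$-bundles on $X_{S,F}$. By the definitions of $\Bun_G$ and $\Bun_H$ recalled in \ref{ss:BunG}, these are the groupoids of $S$-points, so we obtain a morphism $c: \Bun_G \to \Bun_H$ of v-stacks. To see that it is an isomorphism, I would construct the inverse by the same procedure: for an $H$-bundle $\cH$ on $X_{S,F}$, apply $\R_{X_{S,F}/X_{S,Q}}$ as in the proof of Proposition \ref{ss:Weilrestrictionbundles}. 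Since base change along $m$ and Weil restriction along $m$ are mutually inverse equivalences of groupoids (which is the content of that proposition), these two constructions are mutually inverse, yielding the desired isomorphism.

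Next I would check compatibility with the Kottwitz bijection $B(G) \to^\sim B(H)$. By \ref{ss:BunG}, the underlying map on topological spaces $|c|:|\Bun_G| \to |\Bun_H|$ corresponds to a bijection $B(G) \to B(H)$, and I must identify it with Kottwitz's. It suffices to check this on a geometric point $S = \Spa(C,C^+)$ with $C$ an algebraically closed perfectoid field over $\ov\bF_q$. On such a point, Fargues' classification identifies the isomorphism classes of $G$-bundles on the Fargues--Fontaine curve $X_{C,Q}$ with $B(G)$, and similarly for $H$ over $X_{C,F}$. The image of $b \in B(G)$ under $|c|$ is computed by unwinding the proof of Proposition \ref{ss:Weilrestrictionbundles}: one forms the pushout $\ve_*(\cE_{b,m^{-1}(X_{C,Q})})$ along the counit $\ve: G_{X_{C,F}} \to H_{X_{C,F}}$, which is precisely the geometric avatar of the Shapiro construction.

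To finish, I would match this with Kottwitz's definition of $B(G) \to^\sim B(H)$ in \cite[1.10]{Kot85}, which (in the $\R_{F/Q}H$ case) sends a Frobenius-conjugacy class in $G(\breve Q)$ to its image under $G(\breve Q) = H(F \otimes_Q \breve Q) \cong \prod_\io H(\breve F)$, using the decomposition from Proposition \ref{ss:FFcurvefieldextension}, followed by the Frobenius-twisted multiplication that collapses $\fS_d$-orbits; this is compatible with the cyclic $\vp$-permutation of the $r$ copies of $Y_{S,F}$ inside $Y_{S,Q}$ described in that proof. The compatibility then reduces to tracking how a trivialization of $\cE_b$ on $Y_{C,Q}$ pulls back to a trivialization on the components of $Y_{C,F}$.

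The main obstacle is this last bookkeeping step: carefully identifying the geometric pushout construction with Kottwitz's cocycle-level Shapiro isomorphism. Everything else is formal from Proposition \ref{ss:Weilrestrictionbundles} and the explicit description of the Frobenius on $Y_{S,F}$ relative to $Y_{S,Q}$ obtained in the proof of Proposition \ref{ss:FFcurvefieldextension}; the footnote indicating that Kottwitz's arguments go through in equal characteristic ensures there is no characteristic-dependent subtlety in matching the two bijections.
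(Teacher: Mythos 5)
Your proposal follows the same route as the paper: apply Proposition~\ref{ss:Weilrestrictionbundles} to $U=X_{S,Q}$, note that the equivalence is functorial in $S$ to get the isomorphism $c$, and observe that compatibility with Kottwitz's bijection comes from the fact that the construction geometrizes Shapiro's lemma. The paper's proof is just a two-sentence version of yours; the extra detail you give about unwinding the Shapiro isomorphism on geometric points (and the explicit construction of an inverse, which is already implicit in the word ``equivalence'' in Proposition~\ref{ss:Weilrestrictionbundles}) is reasonable elaboration but not a different argument.
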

\begin{proof}
Apply Proposition \ref{ss:Weilrestrictionbundles} to $U=X_{S,Q}$, using the fact that the equivalence commutes with base change in $S$. Proposition \ref{ss:Weilrestrictionbundles} geometrizes Shapiro's lemma, which is precisely how $B(G)\ra^\sim B(H)$ is constructed, so compatibility with this bijection follows.
\end{proof}

\section{Symmetrized Satake sheaves}\label{s:symmetrizedSatakesheaves}
The geometric Satake correspondence is an equivalence of categories between algebraic representations of the Langlands dual of $G$ and perverse sheaves on the \emph{local Hecke stack}. Under this equivalence, highest weight modules correspond to $\IC$ sheaves of affine Schubert varieties, so we will call the latter \emph{Satake sheaves}.

In their work, Fargues--Scholze prove a version of the geometric Satake correspondence in the context of diamonds. For our main results, we need to generalize their Satake sheaves to the symmetrized setting, but there is no general theory of $\IC$ sheaves yet for diamonds, so we must construct symmetrized Satake sheaves by hand. This is our goal for this section.

To this end, we introduce symmetrized local Hecke stacks and Beilinson--Drinfeld affine Grassmannians. (Conveniently, we also use them in \S\ref{s:localshtukas} to define symmetrized moduli spaces of local shtukas.) We then consider symmetrized affine Schubert varieties, whose basic properties we prove by bootstrapping from work of Scholze--Weinstein. Finally, we define symmetrized Satake sheaves and use work of Fargues--Scholze to prove that they are well-behaved.

\subsection{}
Let $D$ be a closed relative Cartier divisor $D$ on $X_{S,Q}$. When $D$ is affinoid, write $B^+_{\dR}(D)$ for the ring of global sections of the completion of $\cO_{X_{S,Q}}$ along $D$, and write $B_{\dR}(D)$ for its the punctured version as in \cite[p.~192]{FS21}. In general, there exists an open cover of $S$ on which $D$ is affinoid \cite[Proposition VI.1.2]{FS21}. Let $I$ be a finite set.
\begin{defn*}
  Write $\Hloc^{(d)I}_G$ for the small v-stack over $\Spd\ov\bF_q$ that is the v-stackification\footnote{By the above comment, it suffices to take the stackification in the analytic topology.} of the prestack sending $S$ to the data of
  \begin{enumerate}[i)]
  \item for all $i$ in $I$, a point $D_i$ of $(\Div_Q^d)(S)$ that is affinoid,
  \item two $G$-bundles $\cE$ and $\cE'$ on $\Spec B^+_{\dR}(\sum_{i\in I}D_i)$,
  \item an isomorphism $\al:\cE|_{\Spec B_{\dR}(\sum_{i\in I}D_i)}\ra^\sim\cE'|_{\Spec B_{\dR}(\sum_{i\in I}D_i)}$ of $G$-bundles.
  \end{enumerate}
Write $f:\Hloc^{(d)I}_G\ra(\Div^d_Q)^I$  for the morphism sending the above data to $(D_i)_{i\in I}$. When $d=1$ or $I$ is a singleton, we omit it from our notation.
\end{defn*}

\begin{defn}\label{defn:BDaffineGrassmannian}
  Write $\Gr^{(d)I}_G$ for the small v-sheaf over $\Spd\ov\bF_q$ whose $S$-points parametrize data consisting of
  \begin{enumerate}[i)]
  \item for all $i$ in $I$, a point $D_i$ of $(\Div_Q^d)(S)$,
  \item a $G$-bundle $\cE'$ on $X_{S,Q}$,
  \item an isomorphism $\al:\cE_1|_{X_{S,Q}\ssm\sum_{i\in I}D_i}\ra^\sim\cE'|_{X_{S,Q}\ssm\sum_{i\in I}D_i}$ of $G$-bundles that is meromorphic along $\sum_{i\in I}D_i$ as in \cite[Definition 5.3.5]{SW20}.
  \end{enumerate}
Write $f:\Gr^{(d)I}_G\ra(\Div^d_Q)^I$  for the morphism sending the above data to $(D_i)_{i\in I}$.
\end{defn}
By the Beauville--Lazslo theorem, Definition \ref{defn:BDaffineGrassmannian} agrees with the pullback of \cite[Definition VI.1.8]{FS21} via $(\Div^d_Q)^I\ra\Div^{d\cdot\#I}_Q$ \cite[p.~97]{FS21}. Under this identification, the natural v-cover $\Gr^{(d)I}_G\ra\Hloc^{(d)I}_G$ restricts ii) (respectively iii)) to the formal neighborhood (respectively punctured formal neighborhood) of $\sum_{i\in I}D_i$.

\subsection{}
The symmetrized spaces are related to the unsymmetrized ones as follows. Write $d\times I$ for $[d]\times I$. We have Cartesian squares
\begin{align*}
  \xymatrix{\Hloc^{(d)I}_{G}\ar[d]^-f &\ar[l]_-\Sg \Hloc^{d\times I}_{G}\ar[d]^-f\\
  (\Div^d_Q)^I & \ar[l]_-\Sg (\Div^1_Q)^{d\times I}
  }\mbox{ and }\xymatrix{\Gr^{(d)I}_{G}\ar[d]^-f &\ar[l]_-\Sg \Gr^{d\times I}_{G}\ar[d]^-f\\
  (\Div^d_Q)^I & \ar[l]_-\Sg (\Div^1_Q)^{d\times I},
  }
\end{align*}
where the $\Sg$ send $(x_{h,i})_{h\in[d],i\in I}$ to $(\sum_{h=1}^dx_{h,i})_{i\in I}$ and preserve all other data. Note that $\fS_d^I$ acts on the right-hand sides via permuting the $(x_{h,i})_{h\in[d],i\in I}$, and the $\Sg$ are invariant with respect to this action.

\subsection{}\label{ss:affineSchubertcells}
We consider a symmetrized version of affine Schubert varieties. Let $T$ be a maximal subtorus of $G$ over $Q$, and write $X_*^+(T)$ for the set of dominant cocharacters of $T_{\ov{Q}}$ with respect to a fixed Borel subgroup $B\subseteq G_{\ov{Q}}$ containing $T_{\ov{Q}}$. Identify $X_*^+(T)$ with the set of conjugacy classes of cocharacters of $G_{\ov{Q}}$.

Let $\de_1,\dotsc,\de_k$ be a partition of $d\times I$ refining the partition given by the $\big\{d\times\{i\}\big\}_{i\in I}$, let $\mu_1,\dotsc,\mu_k$ be in $X_*^+(T)$, and write $E_j$ for the field of definition of $\mu_j$. Write $\mu_\bullet=(\mu_{h,i})_{h\in[d],i\in I}$ for the element of $X_*^+(T)^{d\times I}$ defined by setting $\mu_{h,i}=\mu_j$ for all $(h,i)$ in $\de_j$.
\begin{defn*}\hfill
  \begin{enumerate}[a)]
  \item Write $\Hloc^{d\times I}_{G,\leq\mu_\bullet}$ for the substack of
    \begin{align*}
    \Hloc^{d\times I}_G\times_{(\Div_Q^1)^{d\times I}}\textstyle\prod_{j=1}^k(\Div^1_{E_j})^{\de_j}
    \end{align*}
    whose $S$-points consist of $((x_{h,i})_{h\in[d],i\in I},\cE,\cE',\al)$ such that, for all geometric points $\ov{s}$ of $S$ and $(h,i)$ in $d\times I$, the relative position of $\al_{\ov{s}}$ at $x_{h,i,\ov{s}}$ is bounded by $\sum_{(h',i')}\mu_{h',i'}$, where $(h',i')$ runs over elements of $d\times I$ that satisfy $x_{h',i',\ov{s}} = x_{h,i,\ov{s}}$. Write $\Hloc^{d\times I}_{G,\mu_\bullet}\subseteq\Hloc^{d\times I}_{G,\leq\mu_\bullet}$ for the substack where the relative position of $\al_{\ov{s}}$ at $x_{h,i,\ov{s}}$ equals $\sum_{(h',i')}\mu_{h',i'}$.
  \item Write $\Hloc^{(d)I}_{G,\leq\mu_\bullet}$ for the image of $\Hloc^{d\times I}_{G,\leq\mu_\bullet}$ in $\Hloc^{(d)I}_G\times_{(\Div^d_Q)^I}\prod_{j=1}^k\Div^{\#\de_j}_{E_j}$ under $\Sg$, and write $\Hloc^{(d)I}_{G,\mu_\bullet}\subseteq\Hloc^{(d)I}_{G,\leq\mu_\bullet}$ for the image of $\Hloc^{d\times I}_{G,\mu_\bullet}$.
  \end{enumerate}
  Write $\Gr^{(d)I}_{G,\leq\mu_\bullet}$ (respectively $\Gr^{(d)I}_{G,\mu_\bullet}$) for the preimage of $\Hloc^{(d)I}_{G,\leq\mu_\bullet}$ (respectively $\Hloc^{(d)I}_{G,\mu_\bullet}$) under
  \begin{align*}
    \Gr^{(d)I}_G\times_{(\Div^d_Q)^I}\textstyle\prod_{j=1}^k\Div^{\#\de_j}_{E_j}\ra\Hloc^{(d)I}_G\times_{(\Div^d_Q)^I}\textstyle\prod_{j=1}^k\Div^{\#\de_j}_{E_j}.
  \end{align*}
\end{defn*}

\subsection{}\label{lem:properdescends}
The following lemma enables us to check properness after passing to certain v-covers on the source.
\begin{lem*}
Let $f:Z'\ra Z$ be a morphism of v-stacks. Let $\wt{Z}\ra Z$ be a surjection that is representable in locally spatial diamonds, and write $\wt{f}:\wt{Z}'\ra\wt{Z}$ for the base change of $f$ to $\wt{Z}$. If $\wt{f}$ is proper, then $f$ is proper.
\end{lem*}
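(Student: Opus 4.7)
The plan is to verify each of the four conditions making up properness of a morphism of v-stacks in the Scholze--Fargues--Scholze sense---representability in locally spatial diamonds, separatedness, qcqs-ness, and universal closedness---by pulling back along $\wt{Z}\to Z$ and invoking v-descent for each property individually. Since the hypothesis gives us all four for $\wt{f}$, it suffices to know that each descends along the v-cover $\wt{Z}\to Z$.

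First I would establish representability in locally spatial diamonds. Given any affinoid perfectoid $S$ with a map $S\to Z$, set $\wt{S}\coloneqq S\times_Z\wt{Z}$. Since $\wt{Z}\to Z$ is a surjection representable in locally spatial diamonds, $\wt{S}\to S$ is a v-surjection and $\wt{S}$ is itself a locally spatial diamond. The identification
\begin{align*}
(Z'\times_ZS)\times_S\wt{S}\;\cong\;\wt{Z}'\times_{\wt{Z}}\wt{S}
\end{align*}
exhibits the left-hand side as a locally spatial diamond via the representability packaged into properness of $\wt{f}$. Applying v-descent for the property of being a locally spatial diamond (as in \cite[Proposition 13.4]{Sch17}) to the v-cover $\wt{S}\to S$ then yields that $Z'\times_ZS$ is a locally spatial diamond.

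With representability established, separatedness (i.e.\ the diagonal $Z'\to Z'\times_ZZ'$ being a closed immersion), qcqs-ness, and universal closedness are all properties of morphisms of locally spatial diamonds that are v-local on the target. For each, I would pull back along $\wt{Z}\to Z$ to reduce to the corresponding property of $\wt{f}$: closed immersions descend along v-covers, so separatedness transfers to $f$; qcqs-ness is similarly v-local; and universal closedness transfers using surjectivity of $|\wt{Z}|\to|Z|$ on underlying topological spaces.

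The step I expect to be the main obstacle is v-descent for universal closedness. Representability and separatedness follow rather formally from the descent lemmas in \cite{Sch17}, but universal closedness is a condition about underlying spectral spaces, so one genuinely needs $\wt{Z}\to Z$ to be surjective on points; this is where the hypothesis that it is representable in locally spatial diamonds (rather than merely a v-surjection of stacks) is used in an essential way, and it is the place where I would expect the author to apply the most detailed technical input from the theory of diamonds.
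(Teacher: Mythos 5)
Your outline of the argument is mostly on the right track, but there are two issues, one spurious step and one genuine gap in the part you correctly flag as the crux.

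First, properness for a morphism of v-stacks in Scholze's framework is defined as quasicompact, separated, and universally closed; representability in locally spatial diamonds is not part of the definition, so your first step is verifying a condition that is not required. The paper simply cites \cite[Proposition 10.11]{Sch17} to get quasicompactness and separatedness at once, which matches your treatment of those two conditions via descent lemmas.

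The real issue is with universal closedness. You correctly identify it as the main obstacle and correctly sense that the representability hypothesis on $\wt{Z}\to Z$ must enter in an essential way, but you misidentify what is actually needed: you assert that surjectivity of $|\wt{Z}|\to|Z|$ on points suffices, and it does not. To deduce that $|S'|\to|S|$ is closed from closedness of $|\wt{S}'|\to|\wt{S}|$, you need $|\wt{S}|\to|S|$ (and likewise $|\wt{S}'|\to|S'|$) to be a \emph{quotient map}, not merely surjective: given a closed $C\subseteq|S'|$, you pull back to $|\wt{S}'|$, push forward to a closed subset of $|\wt{S}|$ using universal closedness of $\wt{f}$, observe that this closed subset equals the preimage of the image of $C$ in $|S|$, and then conclude that the image of $C$ is closed precisely because $|\wt{S}|\to|S|$ is a quotient map. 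Surjectivity alone gives no control over which subsets of $|S|$ are closed. The paper extracts the quotient map property from the fact that $\wt{S}=\wt{Z}\times_ZS$ is a locally spatial diamond surjecting onto the perfectoid space $S$, via \cite[Proposition 11.18.(i)]{Sch17}, \cite[Lemma 12.11]{Sch17}, and \cite[Lemma 2.5]{Sch17}. This is the precise technical input you were reaching for, and without it the descent of universal closedness does not go through.
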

\begin{proof}
By \cite[Proposition 10.11]{Sch17}, we see that $f$ is quasicompact and separated. As for universally closed, let $S\ra Z$ be a morphism from a perfectoid space $S$, and consider $S'\coloneqq Z'\times_ZS\ra S$. Because $\wt{S}\coloneqq\wt{Z}\times_ZS$ is a locally spatial diamond with a surjective morphism $\wt{S}\ra S$, the map $|\wt{S}|\ra|S|$ is a quotient map by \cite[Proposition 11.18.(i)]{Sch17}, \cite[Lemma 12.11]{Sch17}, and \cite[Lemma 2.5]{Sch17}. The same holds for the base change $\wt{S}'\ra S'$ of $\wt{Z}'\ra Z$. Since $\wt{f}$ is universally closed, we see that $|\wt{S'}|\ra|\wt{S}|$ is closed. This implies that $|S'|\ra|S|$ is closed, as desired.
\end{proof}

\begin{prop}\label{prop:closedaffineSchubertcells}
The morphisms $\Hloc^{(d)I}_{G,\leq\mu_\bullet}\ra\Hloc^{(d)I}_G\times_{(\Div^d_Q)^I}\textstyle\prod_{j=1}^k\Div^{\#\de_j}_{E_j}$ and $\Gr^{(d)I}_{G,\leq\mu_\bullet}\ra\Gr^{(d)I}_G\times_{(\Div^d_Q)^I}\textstyle\prod_{j=1}^k\Div^{\#\de_j}_{E_j}$ are closed embeddings, and the morphism $f:\Gr^{(d)I}_{G,\leq\mu_\bullet}\ra\textstyle\prod_{j=1}^k\Div^{\#\de_j}_{E_j}$ is proper, representable in spatial diamonds, and of finite $\dimtrg$.
\end{prop}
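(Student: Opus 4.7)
The plan is to reduce each assertion to its analog for the unsymmetrized spaces $\Hloc^{d\times I}_{G,\leq\mu_\bullet}$ and $\Gr^{d\times I}_{G,\leq\mu_\bullet}$, which fall within the scope of the geometric Satake theory developed in \cite{FS21}. The reduction is carried out by descent along the surjection
\begin{align*}
\Sg:\textstyle\prod_{j=1}^k(\Div^1_{E_j})^{\de_j}\ra\prod_{j=1}^k\Div^{\#\de_j}_{E_j},
\end{align*}
which exhibits the right-hand side as the sheaf-theoretic $\prod_j\fS_{\#\de_j}$-quotient of the source, combined with Lemma \ref{lem:properdescends}.

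The first step is to observe that $\Hloc^{d\times I}_{G,\leq\mu_\bullet}$ is invariant under the natural action of $\prod_j\fS_{\#\de_j}\subseteq\fS_d^I$ permuting coordinates within each $\de_j$. Indeed, $\mu_\bullet$ is constant on each $\de_j$ by construction, so the pointwise bound on the relative position of $\al_{\ov s}$ is preserved under such permutations. Together with the definition of $\Hloc^{(d)I}_{G,\leq\mu_\bullet}$ as the image of $\Hloc^{d\times I}_{G,\leq\mu_\bullet}$ under $\Sg$, this yields a Cartesian square
\begin{align*}
\xymatrix{
\Hloc^{d\times I}_{G,\leq\mu_\bullet}\ar@{^{(}->}[r]\ar[d] & \Hloc^{d\times I}_G\times_{(\Div^1_Q)^{d\times I}}\prod_j(\Div^1_{E_j})^{\de_j}\ar[d] \\
\Hloc^{(d)I}_{G,\leq\mu_\bullet}\ar@{^{(}->}[r] & \Hloc^{(d)I}_G\times_{(\Div^d_Q)^I}\prod_j\Div^{\#\de_j}_{E_j},
}
\end{align*}
whose vertical arrows are base changes of $\Sg$. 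The top horizontal arrow is a closed embedding by the unsymmetrized theory. The bottom arrow is a monomorphism by construction, and its base change along $\Sg$ is proper; hence Lemma \ref{lem:properdescends} gives properness of the bottom arrow, which combined with the monomorphism property yields the asserted closed embedding. The closed embedding for $\Gr^{(d)I}_{G,\leq\mu_\bullet}$ is then immediate, since by definition it is the pullback of $\Hloc^{(d)I}_{G,\leq\mu_\bullet}$ along a morphism of v-stacks.

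An entirely parallel argument handles $f:\Gr^{(d)I}_{G,\leq\mu_\bullet}\ra\prod_j\Div^{\#\de_j}_{E_j}$. Its base change along $\Sg$ is the unsymmetrized morphism $\Gr^{d\times I}_{G,\leq\mu_\bullet}\ra\prod_j(\Div^1_{E_j})^{\de_j}$, which is proper, representable in spatial diamonds, and of finite $\dimtrg$ by the unsymmetrized theory. Properness descends via Lemma \ref{lem:properdescends}; representability in spatial diamonds and finiteness of $\dimtrg$ descend by standard v-cover arguments.

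The main obstacle I anticipate is the bookkeeping needed to verify that $\Sg$ meets the hypotheses of Lemma \ref{lem:properdescends} — surjectivity and representability in locally spatial diamonds — and to confirm that the square above is genuinely Cartesian rather than merely commutative. Both reduce to the $\prod_j\fS_{\#\de_j}$-invariance noted above together with elementary properties of the symmetric powers of $\Div^1_{E_j}=\Spd\breve E_j/\phi_{E_j}^\bZ$; the former amounts to saying that $\Sg$ is v-locally on the target isomorphic to a finite disjoint union of copies of the target, which follows from the same analysis that underlies \ref{ss:mapsfromfetcovers}.
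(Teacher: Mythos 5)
Your proposal follows the same overall strategy as the paper: reduce to the unsymmetrized spaces $\Hloc^{d\times I}_{G,\leq\mu_\bullet}$, $\Gr^{d\times I}_{G,\leq\mu_\bullet}$ by descent along $\Sg$ and Lemma \ref{lem:properdescends}, then invoke the known unsymmetrized statements (in the paper, via \cite[Proposition 20.5.4]{SW20}). The only structural difference is the order in which the two closed embeddings are handled: you establish the $\Hloc$ embedding first and obtain the $\Gr$ embedding by base change, whereas the paper reduces the $\Hloc$ statement to the $\Gr$ statement using the v-cover $\Gr^{(d)I}_G\ra\Hloc^{(d)I}_G$ and then proves everything at the level of $\Gr$. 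Both directions are legitimate.

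One substantive inaccuracy worth flagging: in the last paragraph you justify the hypotheses of Lemma \ref{lem:properdescends} for $\Sg$ by asserting that it is ``v-locally on the target isomorphic to a finite disjoint union of copies of the target.'' This is false along the diagonals of $\Div^{\#\de_j}_{E_j}$, where the $\fS_{\#\de_j}$-action on $(\Div^1_{E_j})^{\de_j}$ is not free, so $\Sg$ is genuinely ramified there and not a split cover. The correct (and slightly weaker) fact, which is all the lemma needs, is that $(\Div^1_{E_j})^{\de_j}\ra\Div^{\#\de_j}_{E_j}$ is a quasi-pro-\'etale cover by \cite[Proposition II.3.6]{FS21} and is separated, hence representable in locally spatial diamonds by \cite[Proposition 13.6]{Sch17}. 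This repairs the step without affecting the rest of your argument. The Cartesianity of your $\Sg$-square also deserves a sentence: it holds because the pointwise relative-position bound defining $\Hloc^{d\times I}_{G,\leq\mu_\bullet}$ is $\prod_j\fS_{\de_j}$-invariant (as $\mu_\bullet$ is constant on each $\de_j$), so the bounded substack descends along the quasi-pro-\'etale surjection $\Sg$ rather than merely mapping into the image.
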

\begin{proof}
  By applying \cite[Proposition 10.11.(i)]{Sch17} to the Cartesian square
  \begin{align*}
    \xymatrix{\Gr^{(d)I}_{G,\leq\mu_\bullet}\ar[r]\ar[d] & \Hloc^{(d)I}_{G,\leq\mu_\bullet}\ar[d]\\
    \Gr^{(d)I}_{G}\times_{(\Div^d_Q)^I}\textstyle\prod_{j=1}^k\Div^{\#\de_j}_{E_j}\ar[r] & \Hloc^{(d)I}_{G}\times_{(\Div^d_Q)^I}\textstyle\prod_{j=1}^k\Div^{\#\de_j}_{E_j},
    }
  \end{align*}
  we see that it suffices to prove the statements about $\Gr^{(d)I}_{G,\leq\mu_\bullet}$. Note that the $(\Div^1_{E_j})^{\de_j}\ra\Div^{\#\de_j}_{E_j}$ are quasi-pro-\'etale covers \cite[Proposition II.3.6]{FS21} and separated, so they are representable in locally spatial diamonds \cite[Proposition 13.6]{Sch17}. From here, the Cartesian square
  \begin{align*}
      \xymatrix{\Gr^{(d)I}_{G,\leq\mu_\bullet}\ar[d]^-{f} & \ar[l]_-\Sg\Gr^{d\times I}_{G,\leq\mu_\bullet}\ar[d]^-{f}\\
  \prod_{j=1}^k\Div^{\#\de_j}_{E_j} & \ar[l]_-\Sg \prod_{j=1}^k(\Div^1_{E_j})^{\de_j},}
  \end{align*}
Lemma \ref{lem:properdescends}, \cite[Proposition 10.11.(o)]{Sch17}, and \cite[Proposition 13.4.(iv)]{Sch17} reduce this to the analogous statements for $\Gr^{d\times I}_{G,\leq\mu_\bullet}$. The latter follow from \cite[Proposition 20.5.4]{SW20} by further pulling back to $\prod_{j=1}^k(\Spd\breve{E}_j)^{\de_j}$.
\end{proof}
\begin{cor}\label{cor:openaffineSchubertcells}
The morphisms $\Hloc^{(d)I}_{G,\mu_\bullet}\ra\Hloc^{(d)I}_{G,\leq\mu_\bullet}$ and $\Gr^{(d)I}_{G,\mu_\bullet}\ra\Gr^{(d)I}_{G,\leq\mu_\bullet}$ are open embeddings, and the morphism $f:\Gr^{(d)I}_{G,\mu_\bullet}\ra\textstyle\prod_{j=1}^k\Div^{\#\de_j}_{E_j}$ is compactifiable, representable in locally spatial diamonds, and of finite $\dimtrg$.
\end{cor}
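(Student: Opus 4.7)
The plan is to follow the template of Proposition \ref{prop:closedaffineSchubertcells}, reducing the claim about $\Hloc$ to the claim about $\Gr$ via v-descent, and then reducing from the symmetrized side to the unsymmetrized side via $\Sg$. First, I would form the Cartesian square obtained by pulling $\Hloc^{(d)I}_{G,\mu_\bullet}\hra\Hloc^{(d)I}_{G,\leq\mu_\bullet}$ back along the v-cover $\Gr^{(d)I}_G\ra\Hloc^{(d)I}_G$. Invoking v-descent for open embeddings (the open analog of \cite[Proposition 10.11.(i)]{Sch17}) reduces the claim about $\Hloc^{(d)I}_{G,\mu_\bullet}\hra\Hloc^{(d)I}_{G,\leq\mu_\bullet}$ to the analogous claim for $\Gr$.

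To handle the open embedding $\Gr^{(d)I}_{G,\mu_\bullet}\hra\Gr^{(d)I}_{G,\leq\mu_\bullet}$, I would base change along the quasi-pro-\'etale cover
\begin{align*}
\Sg\colon\textstyle\prod_{j=1}^k(\Div^1_{E_j})^{\de_j}\ra\textstyle\prod_{j=1}^k\Div^{\#\de_j}_{E_j}
\end{align*}
and reduce to showing that $\Gr^{d\times I}_{G,\mu_\bullet}\hra\Gr^{d\times I}_{G,\leq\mu_\bullet}$ is an open embedding, which follows from \cite[Proposition 20.5.4]{SW20} after further base change to $\prod_{j=1}^k(\Spd\breve{E}_j)^{\de_j}$. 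The key compatibility is that since $\mu_{h,i}$ depends only on the index $j$ with $(h,i)\in\de_j$, the subspace $\Gr^{d\times I}_{G,\mu_\bullet}$ is stable under the $\prod_j\fS_{\#\de_j}$-action permuting the $x_{h,i}$ within each $\de_j$, and is in fact the full preimage of $\Gr^{(d)I}_{G,\mu_\bullet}$ under $\Sg$; this is what allows the descent to go through.

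The remaining claims about $f$ then follow by composing this open embedding with the morphism $\Gr^{(d)I}_{G,\leq\mu_\bullet}\ra\prod_{j=1}^k\Div^{\#\de_j}_{E_j}$ from Proposition \ref{prop:closedaffineSchubertcells}, which is proper, representable in spatial diamonds, and of finite $\dimtrg$. Indeed, this factorization directly exhibits $f$ as compactifiable; representability in locally spatial diamonds follows from the fact that an open subspace of a spatial diamond is locally spatial; and the finite $\dimtrg$ bound for $f$ is inherited, since open embeddings have zero $\dimtrg$. The main obstacle I anticipate is verifying v-descent for open embeddings along $\Sg$ together with the preimage identification $\Sg^{-1}(\Gr^{(d)I}_{G,\mu_\bullet})=\Gr^{d\times I}_{G,\mu_\bullet}$; this should parallel the closed embedding step in the previous proposition, but deserves care since it must be checked that $\Sg$-saturation holds on the nose and not merely up to taking image.
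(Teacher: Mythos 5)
Your argument is correct and amounts to a careful fleshing-out of the paper's terse one-line justification ("This follows from Proposition \ref{prop:closedaffineSchubertcells} and the separatedness of $\Div^a_{E_j}$"). The overall route is the same — reduce $\Hloc$ to $\Gr$ along the v-cover, reduce the symmetrized side to the unsymmetrized side along $\Sg$, invoke \cite[Proposition 20.5.4]{SW20}, and then obtain the assertions about $f$ by factoring it as an open embedding followed by the proper morphism from Proposition \ref{prop:closedaffineSchubertcells}. Your flag about the $\Sg$-saturation being on the nose, rather than merely up to image, is precisely the subtle point, and your justification — that $\mu_{h,i}$ depends only on the block $\de_j$ containing $(h,i)$, so $\Gr^{d\times I}_{G,\mu_\bullet}$ is $\prod_j\fS_{\de_j}$-stable and hence equals $\Sg^{-1}(\Gr^{(d)I}_{G,\mu_\bullet})$ — is exactly right.

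One small stylistic note on mechanism: rather than invoking a hypothetical "open analog" of \cite[Proposition 10.11.(i)]{Sch17}, it is cleaner to pass to complements. The complement $\Hloc^{(d)I}_{G,\leq\mu_\bullet}\setminus\Hloc^{(d)I}_{G,\mu_\bullet}$ pulls back along $\Gr\ra\Hloc$ (and along $\Sg$) to the unsymmetrized complement, which is closed by \cite[Proposition 20.5.4]{SW20}; closed immersions descend by \cite[Proposition 10.11.(i)]{Sch17}, so the complement is closed and the inclusion of $\Hloc^{(d)I}_{G,\mu_\bullet}$ is open. This is presumably what the paper's mention of the separatedness of $\Div^a_{E_j}$ is feeding into (the relevant quotient stacks and boundary strata being well-behaved), but your account via direct descent of the open locus and its saturation is an equally valid and, in fact, more explicit way to write it down.
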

\begin{proof}
This follows from Proposition \ref{prop:closedaffineSchubertcells} and the separatedness of $\Div_{E_j}^a$.
\end{proof}

\subsection{}\label{ss:symmetrizedSatakesheaves}
We now define Satake sheaves on our symmetrized local Hecke stacks. Write $j_{\mu_\bullet}:\Hloc^{(d)I}_{G,\mu_\bullet}\ra\Hloc^{(d)I}_G\times_{(\Div^d_Q)^I}\textstyle\prod_{j=1}^k\Div^{\#\de_j}_{E_j}$ for the locally closed embedding, write $2\rho$ for the sum of all positive roots of $G$, and set $d_{\mu_\bullet}\coloneqq\sum_{h\in[d],i\in I}\ang{2\rho,\mu_{h,i}}$. Let $\La$ be the ring of integers of a finite extension of $\bQ_\ell$ containing $\sqrt{q}$. Recall the perverse $t$-structure on
\begin{align*}
D_{\et}(\Hloc^{(d)I}_G\times_{(\Div^d_Q)^I}\textstyle\prod_{j=1}^k\Div^{\#\de_j}_{E_j},\Lambda)^{\mathrm{bd}},
\end{align*}
from \cite[Definition VI.7.1]{FS21}, where $(-)^{\mathrm{bd}}$ refers to being supported on a bounded substack as in \cite[Definition VI.2.6]{FS21}. Write $\IC_{\mu_\bullet}^{(d)I}$ for the complex $\prescript{p}{}\cH^0(j_{\mu_\bullet!}\La[d_{\mu_\bullet}])$.\footnote{While $\IC$ sheaves are usually defined as the image of $\prescript{p}{}\cH^0(j_!\La[d])\ra\prescript{p}{}\cH^0(j_*\La[d])$, for affine Schubert varieties this morphism is expected to be injective \cite[Proposition 8.2]{MV07}. For example, using \cite[Proposition VI.7.5]{FS21}, one can prove that it is an isomorphism after inverting $\ell$.}
\begin{prop*}
Our $\IC^{(d)I}_{\mu_\bullet}$ is flat perverse and universally locally acyclic over $\textstyle\prod_{j=1}^k\Div^{\#\de_j}_{E_j}$. Moreover, its pullback under $\Sg$ is naturally isomorphic to $\IC^{d\times I}_{\mu_\bullet}$, and the latter is isomorphic to the fusion product $\mathlarger{\mathlarger{\ast}}_{h\in[d],i\in I}\IC_{\mu_{h,i}}$.
\end{prop*}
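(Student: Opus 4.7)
The plan is to deduce everything from the unsymmetrized case by descending along $\Sigma$. The last assertion, that $\IC^{d\times I}_{\mu_\bullet}\cong\ast_{(h,i)}\IC_{\mu_{h,i}}$, is essentially Fargues--Scholze's formulation in their geometric Satake chapter: both sides are defined as $\prescript{p}{}\cH^0$ of the $!$-extension from the open stratum $\Hloc^{d\times I}_{G,\mu_\bullet}$, which over the complement of diagonals matches the external product of open affine Schubert cells. Once this identification is in hand, the general properties of fusion products from \cite{FS21} yield flat perversity and universal local acyclicity of $\IC^{d\times I}_{\mu_\bullet}$ over $\prod_j(\Div^1_{E_j})^{\delta_j}$.

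Next I would establish the middle claim, $\Sigma^*\IC^{(d)I}_{\mu_\bullet}\cong\IC^{d\times I}_{\mu_\bullet}$. The partition $\delta_\bullet$ is chosen precisely so that $\Gamma\coloneqq\prod_{j=1}^k\fS_{\#\delta_j}$ is the subgroup of $\fS_d^I$ stabilizing $\mu_\bullet$, and the Cartesian squares in \ref{ss:affineSchubertcells} identify $\Sigma^{-1}(\Hloc^{(d)I}_{G,\mu_\bullet})$ with $\Hloc^{d\times I}_{G,\mu_\bullet}$. Since $j_!$ along an open embedding commutes with arbitrary base change, this immediately gives $\Sigma^*j_{\mu_\bullet!}\Lambda[d_{\mu_\bullet}]\cong j_{\mu_\bullet!}\Lambda[d_{\mu_\bullet}]$ on the unsymmetrized side. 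Applying $\prescript{p}{}\cH^0$ then yields the desired isomorphism, \emph{provided} that $\Sigma^*$ is perversely $t$-exact. Granting this, flat perversity and universal local acyclicity of $\IC^{(d)I}_{\mu_\bullet}$ follow by descent along the surjective $\Gamma$-invariant v-cover $\Sigma$, for instance by passing through $\Gamma$-equivariant categories.

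The main obstacle is the perverse $t$-exactness of $\Sigma^*$. On the complement of all diagonals, $\Gamma$ acts freely on the source, so $\Sigma$ is finite \'etale there and exactness is immediate. The danger lies along diagonals, where $\Gamma$ has nontrivial stabilizers and $\Sigma$ ceases to be \'etale. To deal with this, I would use that $(\Div^1_{E_j})^{\delta_j}\to\Div^{\#\delta_j}_{E_j}$ is the quotient by a finite group of the separated quasi-pro-\'etale cover produced in Proposition \ref{ss:divpullback}, hence $\ell$-cohomologically smooth of relative dimension $0$ in the sense of \cite{FS21}; this then propagates to the Hecke-stack version of $\Sigma$ via the Cartesian square and forces perverse $t$-exactness. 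Should this fail along the diagonal locus, an elegant alternative is to \emph{define} $\IC^{(d)I}_{\mu_\bullet}$ as the $\Gamma$-equivariant descent of $\IC^{d\times I}_{\mu_\bullet}$, exploiting the inherent $\fS_d^I$-symmetry of the fusion product, and then verify agreement with $\prescript{p}{}\cH^0(j_{\mu_\bullet!}\Lambda[d_{\mu_\bullet}])$ on the open dense locus where $\Sigma$ is genuinely \'etale, bypassing the diagonal subtleties altogether.
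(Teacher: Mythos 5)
Your overall strategy—reducing everything to the unsymmetrized side via $\Sg$—matches the paper's, and you correctly identify perverse $t$-exactness of $\Sg^*$ as the crux. However, both routes you propose for establishing it have problems, and the claimed order of the argument introduces a likely circularity.

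First, the argument via $\ell$-cohomological smoothness does not hold up. The quotient $(\Div^1_{E_j})^{\de_j}\ra\Div^{\#\de_j}_{E_j}$ is indeed quasi-pro-\'etale \cite[Proposition II.3.6]{FS21}, but quasi-pro-\'etale morphisms are not in general $\ell$-cohomologically smooth; the map degenerates precisely along the diagonals where $\fS_{\de_j}$ has nontrivial stabilizers, which is exactly where you need control. What the paper actually uses is \cite[Proposition VI.7.4]{FS21}: Fargues--Scholze's perverse $t$-structure on local Hecke stacks over $\prod_j\Div^{\#\de_j}_{E_j}$ is characterized via constant term functors in a way compatible with pullback, and this directly gives that $\Sg^*$ commutes with $\prescript{p}{}\cH^0$. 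No smoothness assertion is needed. Second, the ``elegant alternative'' is flawed on its own terms: $\prescript{p}{}\cH^0(j_{\mu_\bullet!}\La[d_{\mu_\bullet}])$ is not the intermediate extension $j_{\mu_\bullet!*}$, so it is \emph{not} determined by its restriction to a dense open locus; agreement over $\Div^{d,\circ}$ says nothing by itself about agreement everywhere.

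Third, your order of operations—identify $\IC^{d\times I}_{\mu_\bullet}$ with the fusion product first, then read off ULA and flat perversity from ``general properties of fusion products''—is likely circular. The paper's tool for obtaining the fusion identification on all of $(\Div^1_Q)^{d\times I}$ (rather than just the complement of diagonals) is \cite[Proposition VI.9.3]{FS21}, which presupposes ULA. The paper therefore first establishes ULA and flat perversity directly: after reducing to the split case, it computes the constant term $\CT_B(j_{\mu_\bullet!}\La[d_{\mu_\bullet}])[\deg]$, shows via the Witt affine Grassmannian and Mirkovi\'c--Vilonen cycles that $\CT_B(\IC^{d\times I}_{\mu_\bullet})[\deg]$ is locally finite free over $\La$, and then invokes \cite[Proposition VI.6.4]{FS21} and \cite[Proposition VI.7.7]{FS21}. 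Only then does it prove the fusion identification. Your proposal replaces this substantive step with a gesture toward ``general properties,'' which both omits the real content and, given the circularity, does not constitute an argument.
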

\begin{proof}
By applying \cite[Proposition 22.19]{Sch17} and \cite[Proposition VI.7.4]{FS21} to the Cartesian square
\begin{align*}
      \xymatrix{\Hloc^{(d)I}_{G,\mu_\bullet}\ar[d]^-{j_{\mu_\bullet}} & \ar[l]_-\Sg\Hloc^{d\times I}_{G,\mu_\bullet}\ar[d]^-{j_{\mu_\bullet}}\\
  \Hloc^{(d)I}_G\times_{(\Div^d_Q)^I}\prod_{j=1}^k\Div_{E_j}^{\#\de_j} & \ar[l]_-\Sg \Hloc^{d\times I}_G\times_{(\Div^1_Q)^{d\times I}}\prod_{j=1}^k(\Div^1_{E_j})^{\de_j},}
\end{align*}
we see that $\Sg^*(\IC^{(d)I}_{\mu_\bullet}) = \IC^{d\times I}_{\mu_\bullet}$. By \cite[Corollary VI.7.6]{FS21} and \cite[Proposition IV.2.5]{FS21}, we can work v-locally on the base, so the Cartesian square
\begin{align*}
      \xymatrix{\Hloc^{(d)I}_{G,\mu_\bullet}\ar[d]^-{f} & \ar[l]_-\Sg\Hloc^{d\times I}_{G,\mu_\bullet}\ar[d]^-{f}\\
  \prod_{j=1}^k\Div^{\#\de_j}_{E_j} & \ar[l]_-\Sg \prod_{j=1}^k(\Div^1_{E_j})^{\de_j}}
\end{align*}
indicates that it suffices to prove the statements about $\IC^{d\times I}_{\mu_\bullet}$. After passing to a further v-cover, we may assume that $G$ is split over $Q$, so the $E_j=Q$.

Note that $j_{\mu_\bullet!}\La[d_{\mu_\bullet}]$ lies in $\prescript{p}{}D_{\et}^{\leq0}(\Hloc^{d\times I}_G,\La)^{\mathrm{bd}}$. Using the notation and results of \cite[Proposition VI.7.4]{FS21}, this implies that the shifted constant term $\CT_B(j_{\mu_\bullet!}\La[d_{\mu_\bullet}])[\deg]$ lies in $D_{\et}^{\leq0}(\Gr_T^{d\times I},\La)$. By specializing to the Witt affine Grassmannian and comparing with Mirkovi\'c--Vilonen cycles as in the proof of \cite[Proposition VI.7.5]{FS21}, we see that
\begin{align*}
  \CT_B(\IC^{d\times I}_{\mu_\bullet})[\deg] = \cH^0(\CT_B(j_{\mu_\bullet!}\La[d_{\mu_\bullet}])[\deg])
\end{align*}
is locally finite free over $\La$, where the equality holds via \cite[Proposition VI.7.4]{FS21}. Thus \cite[Proposition VI.6.4]{FS21} shows that $\IC^{d\times I}_{\mu_\bullet}$ is universally locally acyclic over $(\Div^1_Q)^I$, and \cite[Proposition VI.7.7]{FS21} shows it is flat perverse.

To identify $\IC^{d\times I}_{\mu_\bullet}$ with the fusion product, \cite[Proposition VI.9.3]{FS21} indicates that it suffices to restrict to $(\Div^1_Q)^{d\times I,\circ}$. Here the proof of \cite[Proposition VI.9.4]{FS21} shows that the fusion product equals the restriction of the exterior tensor product, so the result follows from the K\"unneth formula.
\end{proof}

\section{Symmetrized moduli spaces of local shtukas}\label{s:localshtukas}
To form the plectic diagram in \S\ref{s:plectic}, we need to generalize moduli spaces of local shtukas to the symmetrized setting. We begin by defining them and proving their basic properties. Next, we prove a smoothness theorem for their homology by reducing to the unsymmetrized special case, where it is a result of Fargues--Scholze.

At this point, we turn to the special case of local Shimura varieties. We recall Scholze--Weinstein's comparison with Rapoport--Zink spaces in the (P)EL case, and we conclude by proving an analogous comparison with Rapoport--Zink spaces at hyperspecial level in the unramified Hodge type case. We use this comparison in Appendix \ref{s:appendix} to prove $p$-adic uniformization results for global Shimura varieties in the unramified abelian type case.

\subsection{}
Let $b$ be in $B(G)$. We start by defining the unbounded version.
\begin{defn*}
  Write $\Sht_{G,b}^{(d)I}$ for the small v-sheaf over $\Spd\ov\bF_q$ whose $S$-points parametrize data consisting of
  \begin{enumerate}[i)]
  \item for all $i$ in $I$, a point $D_i$ of $(\Div^d_Q)(S)$,
  \item an isomorphism $\al:\cE_1|_{X_{S,Q}\ssm\sum_{i\in I}D_i}\ra^\sim\cE_b|_{X_{S,Q}\ssm\sum_{i\in I} D_i}$ of $G$-bundles that is meromorphic along $\sum_{i\in I}D_i$.
  \end{enumerate}
  In other words, $\Sht^{(d)I}_{G,b}$ is defined by the Cartesian square
  \begin{align*}
    \xymatrix{\Sht^{(d)I}_{G,b}\ar[r]\ar[d] & \Gr^{(d)I}_G\ar[d] \\
    \ast \ar[r]^-{\cE_b} & \Bun_G,}
  \end{align*}
where the right arrow sends $((D_i)_{i\in I},\cE',\al)$ to $\cE'$. Hence $\Sht^{(d)I}_{G,b}$ has commuting actions of $\ul{G(Q)}$ and $\ul{G_b(Q)}$ via \ref{ss:BunG} and composition with $\al$. Write $f:\Sht^{(d)I}_{G,b}\ra(\Div^d_Q)^I$ for the morphism sending the above data to $(D_i)_{i\in I}$. 
\end{defn*}

\subsection{}
Next, we use Definition \ref{ss:affineSchubertcells} to define the bounded version.
\begin{defn*}
Write $\Sht^{(d)I}_{G,b,\leq\mu_\bullet}$ for the preimage of $\Gr^{(d)I}_{G,\leq\mu_\bullet}$ under
\begin{align*}
\Sht^{(d)I}_{G,b}\times_{(\Div^d_Q)^I}\textstyle\prod_{j=1}^k\Div^{\#\de_j}_{E_j}\ra\Gr^{(d)I}_G\times_{(\Div^d_Q)^I}\textstyle\prod_{j=1}^k\Div^{\#\de_j}_{E_j}.
\end{align*}
Note that $\Sht^{(d)I}_{G,b,\leq\mu_\bullet}$ is preserved under the actions of $\ul{G(Q)}$ and $\ul{G_b(Q)}$. For any compact open subgroup $K$ of $G(Q)$, write $\Sht^{(d)I}_{G,b,\leq\mu_\bullet,K}$ for the quotient $\Sht^{(d)I}_{G,b,\leq\mu_\bullet}/\ul{K}$, and write $f_K:\Sht^{(d)I}_{G,b,\leq\mu_\bullet,K}\ra\textstyle\prod_{j=1}^k\Div^{\#\de_j}_{E_j}$ for the induced morphism.
\end{defn*}
By Proposition \ref{prop:closedaffineSchubertcells}, the morphism
\begin{align*}
\Sht^{(d)I}_{G,b,\leq\mu_\bullet}\ra\Sht^{(d)I}_{G,b}\times_{(\Div^d_Q)^I}\textstyle\prod_{j=1}^k\Div^{\#\de_j}_{E_j}
\end{align*}
is a closed embedding.

\begin{rem*}
We consider $\Sht^{(d)I}_{G,b,\leq\mu_\bullet,K}$ over $\textstyle\prod_{j=1}^k\Div^{\#\de_j}_{E_j}$ instead of pulling back to $\textstyle\prod_{j=1}^k(\Spd\breve{E}_j)^{\de_j}$ because it naturally incorporates partial Frobenius morphisms. For example, this yields a cleaner plectic diagram in Proposition \ref{ss:shtukaplecticdiagram}.
\end{rem*}

\begin{prop}
The morphism $f_K:\Sht^{(d)I}_{G,b,\leq\mu_\bullet,K}\ra\textstyle\prod_{j=1}^k\Div^{\#\de_j}_{E_j}$ is representable in locally spatial diamonds.
\end{prop}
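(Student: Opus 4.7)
The plan is to reduce to the unsymmetrized analogue via v-descent along the symmetrization cover, mirroring the strategy of Proposition \ref{prop:closedaffineSchubertcells}. The argument has three steps.

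\textbf{Step 1 (Cartesian square).} I would first record the Cartesian square
\begin{align*}
  \xymatrix{\Sht^{(d)I}_{G,b,\leq\mu_\bullet,K}\ar[d]^-{f_K} & \ar[l]_-{\Sg}\Sht^{d\times I}_{G,b,\leq\mu_\bullet,K}\ar[d]^-{f_K}\\
\prod_{j=1}^k\Div^{\#\de_j}_{E_j} & \ar[l]_-{\Sg}\prod_{j=1}^k(\Div^1_{E_j})^{\de_j},}
\end{align*}
inherited from the pullback diagrams for $\Gr^{(d)I}_G$ and the $\ul{K}$-quotient. As observed in the proof of Proposition \ref{prop:closedaffineSchubertcells}, the bottom horizontal $\Sg$ is a separated surjection obtained from the quasi-pro-\'etale covers $(\Div^1_{E_j})^{\de_j}\ra\Div^{\#\de_j}_{E_j}$ of \cite[Proposition II.3.6]{FS21}, so it is representable in locally spatial diamonds by \cite[Proposition 13.6]{Sch17}.

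\textbf{Step 2 (Unsymmetrized input).} Next, I would invoke the unsymmetrized case: the right vertical map $\Sht^{d\times I}_{G,b,\leq\mu_\bullet,K}\ra\prod_{j=1}^k(\Div^1_{E_j})^{\de_j}$ is representable in locally spatial diamonds. This is a theorem of Fargues--Scholze (compare \cite[\S III.1]{FS21}); its essential inputs are the representability of the bounded affine Grassmannian in spatial diamonds (Proposition \ref{prop:closedaffineSchubertcells}), the description of $\ast\ra\Bun_G$ at $\cE_b$ as a $\wt{G}_b$-torsor with $\wt{G}_b$ a locally spatial diamond (\ref{ss:BunG}), and the freeness of the $\ul{K}$-action after passing to a sufficiently deep normal subgroup $K'\triangleleft K$ (so that the quotient by the finite group $K/K'$ stays inside the class).

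\textbf{Step 3 (v-descent).} Finally, I would descend along $\Sg$. For any perfectoid $T\ra\prod_j\Div^{\#\de_j}_{E_j}$, set $\wt T=T\times_{\prod_j\Div^{\#\de_j}_{E_j}}\prod_j(\Div^1_{E_j})^{\de_j}$. Then $\wt T\ra T$ is a surjection of small v-sheaves representable in locally spatial diamonds, and the pullback of $f_K$ to $\wt T$ is a locally spatial diamond by Step 2. Combining the transitivity statements in \cite[Proposition 13.4(iv)]{Sch17} with the v-local nature of spatiality (as employed in Lemma \ref{lem:properdescends}) shows that the pullback of $f_K$ to $T$ is itself a locally spatial diamond.

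\textbf{Main obstacle.} The delicate point is Step 3: unlike the universally closed condition handled in Lemma \ref{lem:properdescends}, being a locally spatial diamond is a structural rather than a purely topological property, so one must unwind the definitions to see that it descends along separated surjections that are themselves representable in locally spatial diamonds. This is implicitly used throughout \cite{Sch17} and \cite{FS21}, and in our situation it should follow from the combination of \cite[Proposition 11.18]{Sch17} and \cite[Proposition 13.4]{Sch17}, but a careful verification is the one nontrivial point in the argument.
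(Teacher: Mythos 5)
Your strategy --- reducing to the unsymmetrized case along the Cartesian square under $\Sg$, using that $(\Div^1_{E_j})^{\de_j}\to\Div^{\#\de_j}_{E_j}$ is a quasi-pro-\'etale cover --- is exactly the paper's, and your ``Main obstacle'' paragraph correctly locates the delicate point without resolving it. The resolution is precisely the ingredient you leave out: the descent criterion \cite[Proposition 13.4.(iv)]{Sch17} applies to $f_K$ only once one knows that $f_K$ is separated, and this is not automatic. The paper establishes it first, by unwinding an $S$-point of $\Sht^{(d)I}_{G,b}/\ul{K}$ (a tuple of Cartier divisors, a $G$-bundle $\cE$ on $X_{S,Q}$ with pointwise trivial geometric fibers, a modification $\al$ to $\cE_b$, and a $\ul{K}$-bundle $\cP$ on $S$ together with an identification of its pushforward along $\ul{K}\to\ul{G(Q)}$ with the $\ul{G(Q)}$-bundle attached to $\cE$), and then invoking the separatedness of $[*/\ul{G(Q)}]\ra*$ and $[*/\ul{K}]\ra[*/\ul{G(Q)}]$ from \cite[Proposition 10.11.(ii)]{Sch17}. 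Once separatedness is in hand, \cite[Proposition 13.4.(iv)]{Sch17} reduces the claim to the unsymmetrized morphism $\Sht^{d\times I}_{G,b,\leq\mu_\bullet,K}\to\prod_{j=1}^k(\Div^1_{E_j})^{\de_j}$, which is covered by \cite[Theorem 23.1.4]{SW20} together with one more application of \cite[Proposition 13.4.(iv)]{Sch17} to pass between $(\Div^1_{E_j})^{\de_j}$ and $(\Spd\breve{E}_j)^{\de_j}$; your citation to \cite[\S III.1]{FS21} for this input is not the reference actually used, though your sketch of the underlying mechanism is reasonable.
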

\begin{proof}
Note that an $S$-point of $\Sht^{(d)I}_{G,b}/\ul{K}$ consists of
\begin{enumerate}[i)]
\item for all $i$ in $I$, a point $D_i$ of $\Div^d_Q(S)$,
\item a $G$-bundle $\cE$ on $X_{S,Q}$ such that, for all geometric points $\ov{s}$ of $S$, the fiber $\cE_{\ov{s}}$ is trivial,
\item an isomorphism $\al:\cE|_{X_{S,Q}\ssm\sum_{i\in I}D_i}\ra^\sim\cE_b|_{X_{S,Q}\ssm\sum_{i\in I}D_i}$ of $G$-bundles that is meromorphic along $\sum_{i\in I}D_i$,
\item a $\ul{K}$-bundle $\cP$ on $S$ whose pushforward along $\ul{K}\ra\ul{G(Q)}$ equals the $\ul{G(Q)}$-bundle on $S$ corresponding to $\cE$ via \ref{ss:BunG}.
\end{enumerate}
Because $[*/\ul{G(Q)}]\ra*$ and $[*/\ul{K}]\ra[*/\ul{G(Q)}]$ are separated by \cite[Proposition 10.11.(ii)]{Sch17}, we see that $f_K$ is separated. As the $(\Div^1_{E_j})^{\de_j}\ra\Div^{\#\de_j}_{E_j}$ are quasi-pro-\'etale covers \cite[Proposition II.3.6]{FS21}, the Cartesian square
\begin{align*}
  \xymatrix{\Sht^{(d)I}_{G,b,\leq\mu_\bullet,K}\ar[d]^-{f_K} & \ar[l]_-\Sg\Sht^{d\times I}_{G,b,\leq\mu_\bullet,K}\ar[d]^-{f_K}\\
  \prod_{j=1}^k\Div^{\#\de_k}_{E_j} & \ar[l]_-\Sg \prod_{j=1}^k(\Div^1_{E_j})^{\de_j}}
\end{align*}
and \cite[Proposition 13.4.(iv)]{Sch17} show that it suffices to prove that the right arrow is representable in locally spatial diamonds. But this follows from \cite[Theorem 23.1.4]{SW20}\footnote{While \cite{SW20} only discusses the $Q=\bQ_p$ case, the proofs adapt to any $Q$. Indeed, this is already implicitly used in \cite[Proposition XI.3.2]{FS21}.}, upon further applying \cite[Proposition 13.4.(iv)]{Sch17}.
\end{proof}

\subsection{}\label{ss:homologylocalsystem}
Recall the following result of Fargues--Scholze for the homology of $\Sht^I_{G,b,\leq\mu_\bullet,K}$. Write $\prescript{\prime}{}{\IC}_{\mu_\bullet}^{(d)I}$ for the object of $D_{\mathsmaller{\blacksquare}}(\Sht^{(d)I}_{G,b,\leq\mu_\bullet,K},\La)$ obtained from $\IC_{\mu_\bullet}^{(d)I}$ by first applying the double-dual embedding as in \cite[p.~264]{FS21} and then pulling back to $\Sht^{(d)I}_{G,b,\leq\mu_\bullet,K}$.
\begin{thm*}
  The object $f_{K\natural}(\prescript{\prime}{}{\IC}_{\mu_\bullet}^I)$ of $D_{\mathsmaller{\blacksquare}}([*/\ul{G_b(Q)}]\times\prod_{i\in I}\Div^1_{E_i},\La)$ arises via pullback from
  \begin{align*}
    D_{\mathrm{lis}}([*/\ul{G_b(Q)}],\La)^{B\prod_{i\in I}W_{E_i}}\subseteq D_{\mathsmaller{\blacksquare}}([*/\ul{G_b(Q)}]\times[*/\ul{\textstyle\prod_{i\in I}W_{E_i}}],\La).
  \end{align*}
\end{thm*}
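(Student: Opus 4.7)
The statement is a recollection of \cite[Proposition IX.3.2]{FS21}, so the plan is to unpack notation and verify the translation. First, I would confirm that $\prescript{\prime}{}{\IC}^I_{\mu_\bullet}$, defined here via the double-dual embedding of the IC sheaf on the local Hecke stack followed by pullback to $\Sht^I_{G,b,\leq\mu_\bullet,K}$, agrees with the complex considered in the cited proposition. This amounts to identifying our defining Cartesian square for $\Sht^I_{G,b,\leq\mu_\bullet,K}$ (obtained by pulling back $\Gr^I_{G,\leq\mu_\bullet}\subseteq\Gr^I_G$ along the basepoint $\cE_b:\ast\to\Bun_G$ and then quotienting by $\ul K$) with Fargues--Scholze's construction, and using compatibility of the double-dual embedding with smooth pullback.

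Second, I would translate their conclusion into the form stated here. The cited proposition yields that $f_{K\natural}(\prescript{\prime}{}{\IC}^I_{\mu_\bullet})$ is lisse over $\prod_{i\in I}\Div^1_{E_i}$ with stalks in $D_{\mathrm{lis}}([\ast/\ul{G_b(Q)}],\La)$. Since each $\Div^1_{E_i}=\Spd\breve{E}_i/\phi_{E_i}^{\bZ}$ is covered by the $\ul{W_{E_i}}$-torsor $\Spd\wh{\overline{E}_i}\to\Div^1_{E_i}$ recalled in \ref{ss:relativeCartierdivisors}, a lisse $\La$-complex on $\prod_i\Div^1_{E_i}$ with values in a target $\infty$-category corresponds to an object of that category equipped with a continuous $\prod_iW_{E_i}$-action. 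Applied to the target $D_{\mathrm{lis}}([\ast/\ul{G_b(Q)}],\La)$, this is precisely an object of $D_{\mathrm{lis}}([\ast/\ul{G_b(Q)}],\La)^{B\prod_iW_{E_i}}$, and its image in $D_{\mathsmaller{\blacksquare}}([\ast/\ul{G_b(Q)}]\times[\ast/\ul{\prod_iW_{E_i}}],\La)$ is the pullback of $f_{K\natural}(\prescript{\prime}{}{\IC}^I_{\mu_\bullet})$ under $[\ast/\ul{G_b(Q)}]\times\prod_i\Div^1_{E_i}\to[\ast/\ul{G_b(Q)}]\times[\ast/\ul{\prod_iW_{E_i}}]$.

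The technical heart of \cite[Proposition IX.3.2]{FS21} is the universal local acyclicity of $\IC^I_{\mu_\bullet}$ over $(\Div^1_Q)^I$ (which is Proposition \ref{ss:symmetrizedSatakesheaves} in the unsymmetrized case $d=1$, matching \cite[Proposition VI.7.5]{FS21}), together with a smooth base-change argument exploiting the compactifiable, finite-$\dimtrg$ structure of $f_K$ on bounded Schubert strata and the representability results of \cite[Theorem 23.1.4]{SW20}. All these ingredients are already established, so the main work reduces to the bookkeeping above; the principal subtlety is to check that the partial Frobenius structure (which organizes the $\prod_iW_{E_i}$-action factor-by-factor) is correctly matched with the individual $\Div^1_{E_i}$ factors, and this is handled in \cite[Chapter IX]{FS21}.
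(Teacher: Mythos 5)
Your proposal correctly identifies that \cite[Proposition IX.3.2]{FS21} is the intended main input, but it mischaracterizes the work needed to apply it, and in doing so misses the essential idea. Proposition IX.3.2 of \cite{FS21} is not a general statement about ULA complexes on local Hecke stacks: it is formulated for the \emph{Satake sheaves} $\cS_V$ attached to representations $V$ of $\prod_{i}\wh{G}\rtimes W_{E_i}$ via the geometric Satake equivalence (Theorem VI.11.1 of \cite{FS21}). Since $\IC_{\mu_\bullet}$ is here defined by hand as $\prescript{p}{}\cH^0(j_{\mu_\bullet!}\La[d_{\mu_\bullet}])$, applying IX.3.2 requires first exhibiting a representation $V$ whose Satake sheaf agrees with $\IC_{\mu_\bullet}$. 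The paper's argument takes $V=\mathlarger{\mathlarger{\boxtimes}}_{i\in I}V_{\mu_i}$, the exterior tensor product of Weyl modules (each carrying a natural $W_{E_i}$-action extending the Galois action on $\wh{G}$), and invokes Proposition \ref{ss:symmetrizedSatakesheaves} (and behind it, \cite[Proposition VI.7.5]{FS21}) to identify $\cS_V$ with $\IC_{\mu_\bullet}$.

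Your description of the verification as ``identifying our defining Cartesian square for $\Sht^I_{G,b,\leq\mu_\bullet,K}$ with Fargues--Scholze's construction, and using compatibility of the double-dual embedding with smooth pullback'' addresses only the match on the level of shtuka spaces, not the match between your hand-constructed $\IC_{\mu_\bullet}$ and the Satake sheaf of a representation. That latter match is the nontrivial step, and it is also exactly what supplies the $W_{E_i}$-equivariant structure over $\Div^1_{E_i}$: the $W_{E_i}$-action on $V_{\mu_i}$ is what makes the Satake sheaf naturally live over $\Div^1_{E_i}$ rather than $\Spd\breve{E}_i$, which is why the paper can assert the result over $\prod_i\Div^1_{E_i}$ directly. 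Without identifying $\IC_{\mu_\bullet}$ with a Satake sheaf, there is no representation-theoretic $W_{E_i}$-structure to transport, and the citation to IX.3.2 does not yet land. The rest of your outline (the torsor $\Spd\wh{\ov{E}}_i\to\Div^1_{E_i}$, ULA, partial Frobenius, and the translation of ``lisse over the base'' into an object of $D_{\mathrm{lis}}([*/\ul{G_b(Q)}],\La)^{B\prod_i W_{E_i}}$) is consistent with the paper once this identification is in place.
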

\begin{proof}
  Recall that $[*/\ul{G_b(Q)}]$ arises from quotienting $f_K$ by $\ul{G_b(Q)}$. View the $\mu_i$ as dominant coweights of $\wh{G}$, and recall that their associated Weyl modules $V_{\mu_i}$ naturally have a $W_{E_i}$-action compatible with the $W_{E_i}$-action on $\wh{G}$. Hence \cite[Theorem VI.11.1]{FS21} associates with $V\coloneqq\mathlarger{\mathlarger{\boxtimes}}_{i\in I}V_{\mu_i}$ a flat perverse sheaf on $\Hloc_G^I\times_{(\Div^1_Q)^I}\prod_{i\in I}\Div^1_{E_i}$ that is universally locally acyclic over $\prod_{i\in I}\Div^1_{E_i}$. Proposition \ref{ss:symmetrizedSatakesheaves} indicates that this complex is precisely $\IC_{\mu_\bullet}$. Thus the desired result is \cite[Proposition IX.3.2]{FS21}, except we opt to work over $\prod_{i\in I}\Div^1_{E_i}$ rather than pulling back to $\prod_{i\in I}\Spd\breve{E}_i$.
\end{proof}

\subsection{}\label{ss:descendedhomology}
We descend Theorem \ref{ss:homologylocalsystem} to the symmetrized setting as follows. Note that $\textstyle\prod_{j=1}^k\fS_{\de_j}$ acts on $\Sht^{d\times I}_{G,b,\leq\mu_\bullet,K}$ via permuting the $(x_{h,i})_{h\in[d],i\in I}$, and $f_K:\Sht^{d\times I}_{G,b,\leq\mu_\bullet,K}\ra\textstyle\prod_{j=1}^k(\Div^1_{E_j})^{\de_j}$ is equivariant with respect to this action. Hence $f_{K\natural}(\prescript{\prime}{}{\IC}^{d\times I}_{\mu_\bullet})$ descends to an object $\mathrm{H}^{(d)I}_{\mu_\bullet}$ of $D_{\mathsmaller{\blacksquare}}([*/\ul{G_b(F)}]\times\textstyle\prod_{j=1}^k[(\Div^1_{E_j})^{\de_j}/\fS_{\de_j}],\La)$ by \cite[Proposition 17.3]{Sch17}.
\begin{prop*}
  The object $\mathrm{H}^{(d)I}_{\mu_\bullet}$ arises via pullback from
  \begin{align*}
    D_{\mathrm{lis}}([*/\ul{G_b(F)}],\La)^{B\prod_{j=1}^kW^{\de_j}_{E_j}\rtimes\fS_{\de_j}}\subseteq D_{\mathsmaller{\blacksquare}}([*/\ul{G_b(F)}]\times[*/\ul{\textstyle\prod_{j=1}^kW^{\de_j}_{E_j}\rtimes\fS_{\de_j}}],\La).
  \end{align*}
Moreover, the restriction of $\mathrm{H}^{(d)I}_{\mu_\bullet}$ to $[*/\ul{G_b(F)}]\times\textstyle\prod_{j=1}^k\Div^{\#\de_j,\circ}_{E_j}$ is naturally isomorphic to the restriction of $f_{K\natural}(\prescript{\prime}{}{\IC}^{(d)I}_{\mu_\bullet})$.
\end{prop*}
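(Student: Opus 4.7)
The plan is to descend Theorem~\ref{ss:homologylocalsystem} along the $\prod_{j=1}^k\fS_{\de_j}$-action, the main content being that the geometric permutation of legs intertwines correctly with the permutation of Weil group factors. Since $\mu_{h,i}=\mu_j$ for $(h,i)\in\de_j$, its field of definition is $E_j$, so Theorem~\ref{ss:homologylocalsystem} applied with index set $d\times I$ asserts that $f_{K\natural}(\prescript{\prime}{}\IC^{d\times I}_{\mu_\bullet})$ is pulled back from a lisse object $V\in D_{\mathrm{lis}}([*/\ul{G_b(Q)}],\La)^{B\prod_j W_{E_j}^{\de_j}}$ along the natural morphism $\prod_j(\Div^1_{E_j})^{\de_j}\to[*/\ul{\prod_j W_{E_j}^{\de_j}}]$. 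This morphism is equivariant for the $\prod_j\fS_{\de_j}$-action permuting indices in each block on the left and permuting Weil group factors on the right, and taking quotients yields the commutative square
\begin{align*}
\xymatrix{\prod_j (\Div^1_{E_j})^{\de_j}\ar[r]\ar[d] & [*/\ul{\prod_j W_{E_j}^{\de_j}}]\ar[d]\\
\prod_j[(\Div^1_{E_j})^{\de_j}/\fS_{\de_j}]\ar[r] & [*/\ul{\prod_j W_{E_j}^{\de_j}\rtimes\fS_{\de_j}}],}
\end{align*}
whose vertical arrows are $\prod_j\fS_{\de_j}$-torsors.

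For Part~(1), the key step will be to verify that the $\prod_j\fS_{\de_j}$-equivariant structure on $f_{K\natural}(\prescript{\prime}{}\IC^{d\times I}_{\mu_\bullet})$ coming from the geometric permutation of legs in $\Sht^{d\times I}_{G,b,\leq\mu_\bullet,K}$ corresponds, under Theorem~\ref{ss:homologylocalsystem}, to the natural $\prod_j\fS_{\de_j}$-action on $V$ obtained by permuting Weil group factors in the square above. Granted this compatibility, since $\mathrm{H}^{(d)I}_{\mu_\bullet}$ is by construction the descent of $f_{K\natural}(\prescript{\prime}{}\IC^{d\times I}_{\mu_\bullet})$ along the left vertical arrow via \cite[Proposition 17.3]{Sch17}, it will be the pullback of the descent of $V$ along the right vertical arrow, and the latter descent lies in $D_{\mathrm{lis}}([*/\ul{G_b(Q)}],\La)^{B\prod_j W_{E_j}^{\de_j}\rtimes\fS_{\de_j}}$ as required. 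The compatibility itself should follow from the construction in \cite[Theorem~VI.11.1, Proposition~IX.3.2]{FS21}: the associated $\wh{G}$-representation is the external tensor product $\boxtimes_{(h,i)}V_{\mu_{h,i}}$, on which, within each block $\de_j$, the permutation of factors is manifestly the natural symmetric group action on $V_{\mu_j}^{\boxtimes\de_j}$.

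For Part~(2), I would restrict to the open locus $U\coloneqq\prod_j\Div^{\#\de_j,\circ}_{E_j}$, where the $\prod_j\fS_{\de_j}$-action on $\tilde U\coloneqq\prod_j(\Div^1_{E_j})^{\de_j,\circ}$ is free as in~\ref{ss:divpullback}, so the stacky and sheafy quotients coincide and $\tilde U\to U$ is an \'etale $\prod_j\fS_{\de_j}$-torsor. Using that Proposition~\ref{ss:symmetrizedSatakesheaves} gives $\Sg^*\IC^{(d)I}_{\mu_\bullet}=\IC^{d\times I}_{\mu_\bullet}$ (which, via the Cartesian square induced by $\Sg$ at the shtuka level, carries over after pulling back along $\Sht^{d\times I}_{G,b,\leq\mu_\bullet,K}\to\Sht^{(d)I}_{G,b,\leq\mu_\bullet,K}$), together with proper base change justified by Proposition~\ref{prop:closedaffineSchubertcells}, I would obtain a natural identification $\Sg^*f_{K\natural}(\prescript{\prime}{}\IC^{(d)I}_{\mu_\bullet})\cong f_{K\natural}(\prescript{\prime}{}\IC^{d\times I}_{\mu_\bullet})$ over $\tilde U$. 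The left-hand side descends along the \'etale torsor $\tilde U\to U$ to $f_{K\natural}(\prescript{\prime}{}\IC^{(d)I}_{\mu_\bullet})|_U$, while the right-hand side descends by construction to $\mathrm{H}^{(d)I}_{\mu_\bullet}|_U$, giving the desired natural isomorphism.

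The hard part will be making the equivariance compatibility in Part~(1) precise: one must verify that the $\fS_{\de_j}$-action on the shtuka space, after being processed through the Fargues--Scholze smoothness result, realizes the expected permutation action on the Weil group factors of the Satake local system. This compatibility is geometrically natural, but requires careful bookkeeping in the diamond-theoretic formulation of \cite{FS21}.
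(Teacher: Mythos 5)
Your proposal follows essentially the same route as the paper. Part~(1) is Theorem~\ref{ss:homologylocalsystem} plus \cite[Proposition~17.3]{Sch17} and the commutative square relating $\Sg$ on the Div side to $W\rtimes\fS$-quotients on the classifying-stack side, and Part~(2) descends an identification $\Sg^*f_{K\natural}(\prescript{\prime}{}\IC^{(d)I}_{\mu_\bullet})\cong f_{K\natural}(\prescript{\prime}{}\IC^{d\times I}_{\mu_\bullet})$ over the free locus $\prod_j(\Div^1_{E_j})^{\de_j,\circ}$, exactly as in the paper.

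Two small points of precision. First, in Part~(2) you invoke ``proper base change justified by Proposition~\ref{prop:closedaffineSchubertcells}''; the correct tool here is not proper base change for $Rf_!$ but base change for the relative homology functor $f_\natural$ along the Cartesian square $\Sht^{d\times I}_{G,b,\leq\mu_\bullet,K}=\Sg^*\Sht^{(d)I}_{G,b,\leq\mu_\bullet,K}$, which is \cite[Proposition~VII.3.1.(iii)]{FS21}. Likewise, moving from $\Sg^*\IC^{(d)I}_{\mu_\bullet}=\IC^{d\times I}_{\mu_\bullet}$ (Proposition~\ref{ss:symmetrizedSatakesheaves}) to $\Sg^*(\prescript{\prime}{}\IC^{(d)I}_{\mu_\bullet})=\prescript{\prime}{}\IC^{d\times I}_{\mu_\bullet}$ requires compatibility of the double-dual embedding and the subsequent pullback to the shtuka space with base change, for which the paper cites \cite[Proposition~IV.2.15]{FS21} and \cite[Proposition~VII.4.1]{FS21}. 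Second, the equivariance compatibility you flag in Part~(1) as ``the hard part'' is not a genuine obstruction: since $\Sg:\prod_j(\Div^1_{E_j})^{\de_j}\to\prod_j[(\Div^1_{E_j})^{\de_j}/\fS_{\de_j}]$ and $[*/\ul{\prod_jW_{E_j}^{\de_j}}]\to[*/\ul{\prod_jW_{E_j}^{\de_j}\rtimes\fS_{\de_j}}]$ are both $\prod_j\fS_{\de_j}$-torsors and the square is Cartesian, the descent data coming from the geometric $\fS_{\de_j}$-action on $\Sht^{d\times I}_{G,b,\leq\mu_\bullet,K}$ automatically matches what \cite[Proposition~17.3]{Sch17} needs once one knows $f_{K\natural}(\prescript{\prime}{}\IC^{d\times I}_{\mu_\bullet})$ is pulled back from a lisse object. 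Your instinct to check the $\wh G$-representation side against the geometric permutation of legs is reasonable caution but more than the argument requires.
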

\begin{proof}
  The first claim follows from Theorem \ref{ss:homologylocalsystem} and \cite[Proposition 17.3]{Sch17}, since we have a commutative square
  \begin{align*}
    \xymatrix{\prod_{j=1}^k[(\Div^1_{E_j})^{\de_j}/\fS_{\de_j}]\ar[d] & \ar[l]_-\Sg \prod_{j=1}^k(\Div^1_{E_j})^{\de_j}\ar[d]\\
    [*/\ul{\prod_{j=1}^kW_{E_j}^{\de_j}\rtimes\fS_{\de_j}}] & \ar[l]_-\Sg [*/\ul{\prod_{j=1}^kW_{E_j}^{\de_j}}].
    }
  \end{align*}
  To prove the second claim, it suffices to see that the pullback under $\Sg$ of the restriction of $f_{K\natural}(\prescript{\prime}{}{\IC}^{(d)I}_{\mu_\bullet})$ to $\textstyle\prod_{j=1}^k\Div^{\#\de_j,\circ}_{E_j}$ is naturally isomorphic to the restriction of $f_{K\natural}(\prescript{\prime}{}{\IC}^{d\times I}_{\mu_\bullet})$ to $\prod_{j=1}^k(\Div^1_{E_j})^{\de_j,\circ}$. Corollary \ref{ss:symmetrizedSatakesheaves}, \cite[Proposition IV.2.15]{FS21}, and \cite[Proposition VII.4.1]{FS21} indicate that $\Sg^*(\prescript{\prime}{}{\IC}^{(d)I}_{\mu_\bullet})=\prescript{\prime}{}{\IC}^{d\times I}_{\mu_\bullet}$. So the result follows from the Cartesian square
  \begin{align*}
    \xymatrix{\Sht^{(d)I}_{G,b,\leq\mu_\bullet,K}\ar[d]^-{f_K} & \ar[l]_-\Sg \Sht^{d\times I}_{G,b,\leq\mu_\bullet,K}\ar[d]^-{f_K}\\
    \prod_{j=1}^k\Div^{\#\de_j}_{E_j} & \ar[l]_-\Sg \prod_{j=1}^k(\Div^1_{E_j})^{\de_j}
    }
  \end{align*}
combined with \cite[Proposition VII.3.1.(iii)]{FS21}.
\end{proof}

\subsection{}\label{ss:localShimuravarieties}
Let us recall the special case of local Shimura varieties. In this subsection, take $Q=\bQ_p$, $d=1$, and $I$ to be a singleton. Suppose that $(G,b,\mu)$ is a local Shimura datum as in \cite[Definition 5.1]{RV14}. Then there exists a unique smooth rigid space $\breve\cM_{G,b,\mu,K}$ over $\breve{E}$ for which $\breve\cM^{\diamondsuit}_{G,b,\mu,K}$ is isomorphic to $\Sht_{G,b,\leq\mu,K}\times_{\Div^1_E}\Spd\breve{E}$ over $\Spd\breve{E}$ \cite[p.~225]{SW20}. Note that the \'etale period morphism implies that $\breve\cM_{G,b,\mu,K}$ has dimension $d_\mu$.

By applying \cite[Proposition 10.2.3]{SW20} to the isomorphism
\begin{align*}
\Sht_{G,b,\leq\mu,K}\times_{\Div^1_E}\Spd\breve{E}\ra^\sim\vp^*_E(\Sht_{G,b,\leq\mu,K}\times_{\Div^1_E}\Spd\breve{E})
\end{align*}
arising via pulling back from $\Div^1_E$, we get a Weil descent datum on $\breve\cM_{G,b,\mu,K}$ over $\breve{E}$. We similarly obtain an action of $G_b(\bQ_p)$ on $\breve\cM_{G,b,\mu,K}$ over $\breve{E}$.

Write $\breve\cM_{G,b,\mu}$ for the locally spatial diamond $\Sht_{G,b,\leq\mu}\times_{\Div^1_E}\Spd\breve{E}$.

\subsection{}\label{ss:RapoportZinkspaces}
Maintain the notation of \ref{ss:localShimuravarieties}. EL type and PEL type Rapoport--Zink spaces can be viewed as local Shimura varieties as follows. In the EL case, let $\fD=(B,\cO_B,V,b,\mu,\cL)$ be an EL datum relative to $\ov\bF_q$ in the sense of \cite[definition (3.18)]{RZ96}. In the PEL case, where $p\neq2$, additionally include $(\cdot,\cdot)$ and $*$ as in \cite[definition (3.18)]{RZ96}. Suppose that $G$ is the associated connected reductive group over $\bQ_p$ as in \cite[(1.38)]{RZ96}, and for any compact open subgroup $K$ of $G(\bQ_p)$, write $\breve{\mathrm{M}}_{\fD,K}$ for the associated smooth rigid space over $\breve{E}$ as in \cite[(5.34)]{RZ96}. Recall that $\breve{\mathrm{M}}_{\fD,K}$ enjoys a Weil descent datum over $\breve{E}$ \cite[(5.47)]{RZ96} as well as an action of $G_b(\bQ_p)$ \cite[(5.35)]{RZ96}.
\begin{thm*}
  We have a natural isomorphism $\breve{\mathrm{M}}_{\fD,K}\ra^\sim\breve\cM_{G,b,\mu,K}$ that is compatible with the Weil descent datum, the $G_b(\bQ_p)$-action, and varying $K$.
\end{thm*}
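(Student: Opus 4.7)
The strategy is to verify the isomorphism after passage to diamonds, and then descend to rigid spaces. Both $\breve{\mathrm{M}}_{\fD,K}$ and $\breve\cM_{G,b,\mu,K}$ are smooth rigid spaces over $\breve E$, and the diamond functor is fully faithful on such spaces by \cite[Proposition 10.2.3]{SW20}. Moreover, $\breve\cM_{G,b,\mu,K}$ is characterized in \ref{ss:localShimuravarieties} by having diamond $\Sht_{G,b,\leq\mu,K}\times_{\Div^1_E}\Spd\breve E$. So it suffices to construct a natural isomorphism of diamonds between $\breve{\mathrm{M}}_{\fD,K}^\diamondsuit$ and $\Sht_{G,b,\leq\mu,K}\times_{\Div^1_E}\Spd\breve E$ over $\Spd\breve E$.

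For this, I would match moduli problems over an affinoid perfectoid $S = \Spa(R,R^+)$ over $\Spd\breve E$. An $S$-point of $\breve{\mathrm{M}}_{\fD,K}^\diamondsuit$ amounts, via the chosen untilt $R^{+\sharp}$, to a $p$-divisible group $X$ over $R^{+\sharp}$ equipped with an $\cO_B$-action satisfying the determinant condition determined by $\mu$, a $K$-level structure on its rational Tate module, and a quasi-isogeny $\rho : \bX \otimes_{\ov\bF_p} R^+/p \dashrightarrow X \bmod p$, where $\bX$ is the framing $p$-divisible group associated with $b$. Scholze--Weinstein's equivalence between $p$-divisible groups over integral perfectoid rings and minuscule shtukas, developed in \cite[Lecture 17]{SW20}, translates $X$ into a shtuka on $X_{S,\bQ_p}$ with one leg bounded by $\mu$; the $\cO_B$-action promotes this to a $G$-shtuka, the quasi-isogeny $\rho$ identifies its framing with $\cE_b$, and the $K$-level structure becomes a reduction of structure group to $K$. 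This is exactly the moduli problem defining $\Sht_{G,b,\leq\mu,K}\times_{\Div^1_E}\Spd\breve E$.

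The compatibilities follow by tracking the extra structures through this dictionary. The Weil descent datum on $\breve{\mathrm{M}}_{\fD,K}$ is induced by the Frobenius on its formal model over $\breve\cO_E$ as in \cite[(5.47)]{RZ96}, while on $\breve\cM_{G,b,\mu,K}$ it comes from pulling back Frobenius on $\Div^1_E$; both implement the same Frobenius twist on the underlying $G$-shtuka. The $G_b(\bQ_p)$-action on $\breve{\mathrm{M}}_{\fD,K}$ acts by precomposing $\rho$ with self-quasi-isogenies of $\bX$, which under the equivalence becomes the $G_b(\bQ_p)$-action on $\Sht_{G,b,\leq\mu,K}$ by composition with $\al$ as in \ref{ss:BunG}. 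Compatibility with shrinking $K$ is immediate from the definition of level structures on either side. I expect the main technical obstacle to lie in the PEL case, where the polarization on $X$ together with the involution $*$ on $B$ impose additional symmetry: one must check separately that the Scholze--Weinstein equivalence transports these to the correct symplectic/orthogonal reduction of structure group on the associated $G$-shtuka, which is where the hypothesis $p\neq 2$ enters. This is a careful but standard dictionary relying on the compatibility of the Dieudonn\'e module functor with duality.
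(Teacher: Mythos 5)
The paper's proof is a one-line citation to \cite[Corollary 24.3.5]{SW20} and its proof, which is precisely the comparison you are reconstructing: matching the Rapoport--Zink moduli problem with the shtuka moduli problem via the Scholze--Weinstein equivalence between $p$-divisible groups over integral perfectoid rings and minuscule shtukas, then descending to rigid spaces via full faithfulness of the diamond functor on seminormal rigid spaces. Your proposal unpacks that cited result correctly and takes the same route; you could simply have invoked the corollary, but the sketch is sound, including the observation that the descent, the $G_b(\bQ_p)$-action via precomposition with self-quasi-isogenies, and the level-structure compatibility all transport cleanly through the dictionary.
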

\begin{proof}
This follows from \cite[Corollary 24.3.5]{SW20} and its proof.
\end{proof}

\subsection{}\label{ss:Kimspaces}
Maintain the notation of \ref{ss:localShimuravarieties}. Unramified Hodge type Rapoport--Zink spaces can also be viewed as local Shimura varieties as follows. Assume that $p\neq2$, adopt the notation of \ref{ss:integrallocalshimuravarieties}, and suppose that $(G,b,\mu)$ is of Hodge type as in \cite[Definition 2.5.5]{Kim18}. Write $\mathtt{RZ}_{G,b}$ for the associated formally smooth formal scheme over $\breve\cO_E$ as in \cite[Theorem 4.9.1]{Kim18}. Recall that $\mathtt{RZ}_{G,b}$ enjoys a Weil descent datum over $\breve\cO_E$ \cite[Definition 7.3.5]{Kim18} as well as an action of $G_b(\bQ_p)$ \cite[Section 7.2]{Kim18}.
\begin{thm*}
We have a natural isomorphism $\mathtt{RZ}_{G,b}\ra^\sim\breve\fM_{G,b,\mu}$ that is compatible with the Weil descent datum and the $G_b(\bQ_p)$-action.
\end{thm*}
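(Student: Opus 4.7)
The plan is to reduce to the PEL case handled by Theorem \ref{ss:RapoportZinkspaces} via a Hodge embedding. Since $(G,b,\mu)$ is of unramified Hodge type, I would choose a closed immersion of local Shimura data $(G,b,\mu)\hookrightarrow(\widetilde G,\widetilde b,\widetilde\mu)$ into a PEL datum, together with Tate tensors $(s_\alpha)$ cutting out $G$ inside $\widetilde G$. Kim's construction in \cite{Kim18} realizes $\mathtt{RZ}_{G,b}$ as a closed formal subscheme of $\mathtt{RZ}_{\widetilde G,\widetilde b}$ classifying $p$-divisible groups whose crystalline tensors are horizontal and reduce to $(s_\alpha)$ at the base point. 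On the other hand, Pappas--Rapoport's construction in \cite{PR22} realizes $\breve\fM_{G,b,\mu}$ as a closed formal subscheme of $\breve\fM_{\widetilde G,\widetilde b,\widetilde\mu}$, and the latter is identified with $\mathtt{RZ}_{\widetilde G,\widetilde b}$ by Theorem \ref{ss:RapoportZinkspaces}. Thus both $\mathtt{RZ}_{G,b}$ and $\breve\fM_{G,b,\mu}$ live naturally inside a common ambient formal scheme.

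Having made this reduction, I would compare the two closed formal subschemes in two steps. First, on rigid-analytic generic fibers both subspaces pull back to the common locally closed subdiamond $\breve\cM_{G,b,\mu}\subseteq\breve\cM_{\widetilde G,\widetilde b,\widetilde\mu}$: for $\mathtt{RZ}_{G,b}$ this is Kim's generic fiber comparison, and for $\breve\fM_{G,b,\mu}$ it is built into the Pappas--Rapoport construction via \ref{ss:localShimuravarieties}. Second, both $\mathtt{RZ}_{G,b}$ and $\breve\fM_{G,b,\mu}$ are normal, flat, and formally smooth over $\breve\cO_E$ of the same dimension $d_\mu$, and hence each is determined, as a closed formal subscheme of $\mathtt{RZ}_{\widetilde G,\widetilde b}$, by its generic fiber together with its underlying reduced special fiber. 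The latter is in both cases the affine Deligne--Lusztig variety $X_\mu(b)$ sitting inside $X_{\widetilde\mu}(\widetilde b)$, so the closed subschemes coincide.

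Compatibility with the Weil descent datum and the $G_b(\bQ_p)$-action follows by functoriality: both structures are inherited from the ambient PEL Rapoport--Zink space, where they already agree under the isomorphism of Theorem \ref{ss:RapoportZinkspaces}. Compatibility with the Hodge embedding is a standard feature of both Kim's and Pappas--Rapoport's constructions.

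The main obstacle is the integral-level comparison between Kim's crystalline-Dieudonn\'e moduli description and the prismatic/v-sheaf description of Pappas--Rapoport. To handle this, I would test on $\Spf\cO_C$-points for algebraically closed nonarchimedean extensions $C/\breve E$, where both moduli problems classify the same linear-algebra data: Breuil--Kisin--Fargues modules equipped with a $G$-structure of type $\mu$. This follows from the crystalline--prismatic comparison together with the Tannakian description of $G$-torsors via tensors. Once pointwise agreement is established, v-descent and the representability of both functors yield the claimed isomorphism of formal schemes with all the required equivariance.
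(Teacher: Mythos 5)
Your plan is essentially the same as the paper's: choose a Hodge embedding $G\hookrightarrow\GL_h$ (though note the ambient Rapoport--Zink space is of EL type, not PEL, arising from the datum $(\bQ_p,\bZ_p,\La\otimes\bQ_p,b',\mu',\{p^\bZ\La\})$), identify the ambient spaces via Theorem \ref{ss:RapoportZinkspaces}, realize $\mathtt{RZ}_{G,b}$ (Kim) and $\breve\fM_{G,b,\mu}$ (Pappas--Rapoport, via \cite[Proposition 3.6.2]{PR21}) as closed subobjects of the common ambient, and then compare on $\Spf\cO_C$-points using Breuil--Kisin--Fargues modules with $G$-structure; equivariance comes by functoriality from the ambient. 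This is exactly how the paper proceeds, appealing to \cite[Proposition 18.4.1]{SW20} to work at the level of diamonds, to \cite[Lemma 12.5]{Sch17} to reduce the equality of closed subdiamonds to a check on geometric points, and to \cite[Theorem 17.5.2]{SW20} together with \cite[Theorem 9.10]{An22} for the pointwise identification.

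One flag on your second paragraph: you assert that $\breve\fM_{G,b,\mu}$ is \emph{formally smooth} over $\breve\cO_E$, but the Pappas--Rapoport input \cite[Theorem 2.5.4]{PR22} only gives normal, flat, and locally formally of finite type; formal smoothness of $\breve\fM_{G,b,\mu}$ is a \emph{consequence} of the theorem you are trying to prove (via the isomorphism with Kim's formally smooth $\mathtt{RZ}_{G,b}$), so invoking it here is circular. The generic-fiber-plus-reduced-special-fiber comparison is also not needed once you have flatness: a flat closed formal subscheme of a formal $\breve\cO_E$-scheme is the flat (schematic) closure of its rigid generic fiber, so knowing the generic fibers agree would already suffice—but establishing that generic fiber agreement as closed subspaces is itself nontrivial (the paper explicitly sidesteps it; see the remark after the theorem). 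Your fourth paragraph, where you drop down to geometric/$\Spf\cO_C$-points, is the argument that actually closes the gap and is the one the paper runs.
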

\begin{proof}
  By \cite[Proposition 18.4.1]{SW20}, it suffices to construct this isomorphism between the associated diamonds. Let $\La$ be the faithful algebraic representation of $G$ over $\bZ_p$ as in \cite[Definition 2.5.5]{Kim18}. By definition, the corresponding closed embedding $G\ra\GL_h$ of group schemes over $\bZ_p$ induces a morphism $(G,b,\mu)\ra(\GL_h,b',\mu')$ of local Shimura data, where the latter arises from an EL datum $(\bQ_p,\bZ_p,\La\otimes_{\bZ_p}\bQ_p,b',\mu',\{p^\bZ\La\})$ relative to $\ov\bF_p$ as in \ref{ss:RapoportZinkspaces}.

  Note that \cite[Theorem 25.1.2]{SW20} and \cite[Theorem 4.9.1]{Kim18} give us a closed embedding $\mathtt{RZ}_{G,b}\ra\breve\fM_{\GL_h,b',\mu'}$, and \cite[Proposition 3.6.2]{PR21} gives us another closed embedding $\breve\fM_{G,b,\mu}\ra\breve\fM_{\GL_h,b',\mu'}$. Applying \cite[Lemma 12.5]{Sch17} to the inclusions of $\breve\fM_{G,b,\mu}\cap\mathtt{RZ}_{G,b}^{\diamondsuit}$ into $\breve\fM_{G,b,\mu}$ and $\mathtt{RZ}_{G,b}^{\diamondsuit}$ enables us to check the desired result on geometric points, where it follows immediately from \cite[Theorem 17.5.2]{SW20} and \cite[Theorem 9.10]{An22}.
\end{proof}
\begin{rem*}
  We could similarly obtain a natural isomorphism between the smooth rigid space $\mathtt{RZ}^K_{G,b}$ over $\breve{E}$ as in \cite[p.~91]{Kim18} and $\breve\cM_{G,b,\mu,K}$ that is compatible with the Weil descent datum, the $G_b(\bQ_p)$-action, and varying $K$. However, we will not need this.
\end{rem*}

\section{The plectic conjecture over local fields}\label{s:plectic}
Our goal for this section is to prove Theorem C. First, we apply the twisted diagonal morphism from \S\ref{s:FFcurves} to construct the plectic diagram for local Hecke stacks. For this, we elaborate on an observation of Nekov\'a\v{r}--Scholl to generalize the twisted diagonal morphism to account for \emph{plectic reflex fields}. We then prove that the symmetrized Satake sheaves from \S\ref{s:symmetrizedSatakesheaves} are compatible with the plectic diagram.

From here, we deduce the plectic diagram for moduli spaces of local shtukas, as well as the plectic conjecture over local fields. Finally, we specialize to the case of local Shimura varieties, proving Theorem C. We also explain how to extend our results to general finite \'etale $Q$-algebras $F$, which we use in \S\ref{s:Shimuravarieties} to prove our results on global Shimura varieties.

\subsection{}\label{ss:Hlocplecticdiagram}
We begin with the local Hecke stack incarnation of the plectic diagram. Recall the notation of Proposition \ref{ss:Weilrestrictionbundles}.
\begin{prop*}
  We have a natural Cartesian square
  \begin{align*}
    \xymatrix{\Hloc^I_G\ar[r]^-{(m^{-1})^I}\ar[d]^-f & \Hloc^{(d)I}_H\ar[d]^-f\\
    \prod_{i\in I}\Div_Q^1\ar[r]^-{(m^{-1})^I} & \prod_{i\in I}\Div_F^d.}
  \end{align*}
\end{prop*}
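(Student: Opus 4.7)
The plan is to realize the top horizontal arrow as the functor induced by the geometric Shapiro-type equivalence of Proposition \ref{ss:Weilrestrictionbundles}, after which the Cartesian property reduces to a direct comparison on $S$-points. Concretely, I would define $(m^{-1})^I$ by sending an $S$-point $((D_i)_{i\in I},\cE,\cE',\al)$ of $\Hloc^I_G$ to the $S$-point $((m^{-1}(D_i))_{i\in I},c(\cE),c(\cE'),c(\al))$ of $\Hloc^{(d)I}_H$, where $m^{-1}(D_i)\in\Div^d_F(S)$ is the pullback divisor from Proposition \ref{ss:divpullback} and $c$ denotes the equivalence of groupoids provided by Proposition \ref{ss:Weilrestrictionbundles}.

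For this prescription to be well-defined, I need to verify that the formal neighborhood of $\sum_i m^{-1}(D_i)$ in $X_{S,F}$ agrees with the pullback under $m:X_{S,F}\ra X_{S,Q}$ of the formal neighborhood of $\sum_i D_i$ in $X_{S,Q}$, and similarly for the punctured versions. Granting this, Proposition \ref{ss:Weilrestrictionbundles} applied to $U=\Spec B^+_{\dR}(\sum_i D_i)$ and its punctured analogue yields the desired natural equivalence between $G$-bundles on $\Spec B^+_{\dR}(\sum_i D_i)$ and $H$-bundles on $\Spec B^+_{\dR}(\sum_i m^{-1}(D_i))$, functorial in the data $(\cE,\cE',\al)$. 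The Cartesian property is then immediate on $S$-points: an $S$-point of the fiber product $\Hloc^{(d)I}_H\times_{\prod_i\Div^d_F}\prod_i\Div^1_Q$ consists of a tuple $(D_i)_i$ in $\prod_i\Div^1_Q(S)$ together with $H$-bundle data on the formal neighborhood of $\sum_i m^{-1}(D_i)$, and the equivalence above identifies the latter with $G$-bundle data on the formal neighborhood of $\sum_i D_i$, i.e., an $S$-point of $\Hloc^I_G$. Naturality of Proposition \ref{ss:Weilrestrictionbundles} promotes this bijection to an isomorphism of v-stacks.

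The main technical point is the base change identification of formal neighborhoods, namely that $\Spec B^+_{\dR}(m^{-1}(D))$ coincides with $\Spec B^+_{\dR}(D)\times_{X_{S,Q}}X_{S,F}$ for $D=\sum_i D_i$. This should follow from the fact that $m$ is finite \'etale of degree $d$ (via Proposition \ref{ss:FFcurvefieldextension} together with \cite[Lemma 15.6]{Sch17}), so passing to the completion along $D$ commutes with the base change along $m$. This is what allows one to invoke Proposition \ref{ss:Weilrestrictionbundles} in the formal setting of $\Hloc$, even though its statement is phrased for adic spaces; alternatively, one could first prove the analogous statement for $\Gr^I_G\to\Gr^{(d)I}_H$, where bundles live on $X_{S,Q}$ and $X_{S,F}$ directly and Proposition \ref{ss:Weilrestrictionbundles} applies verbatim, and then transfer across via Beauville--Laszlo.
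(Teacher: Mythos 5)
Your proposal is correct and matches the paper's proof almost verbatim: the paper likewise unpacks the $S$-points of $\Hloc^I_G$, applies Proposition \ref{ss:Weilrestrictionbundles} to $\Spec B^+_{\dR}(\sum_i D_i)$ and $\Spec B_{\dR}(\sum_i D_i)$ using Proposition \ref{ss:FFcurvefieldextension} for the base-change identification, and concludes by matching against the $S$-points of the fiber product. Your extra remark about Proposition \ref{ss:Weilrestrictionbundles} being stated for adic spaces while $\Spec B^+_{\dR}$ is a scheme is a genuine technical wrinkle that the paper does not address explicitly, and both of the remedies you offer (noting the argument is base-agnostic, or routing through $\Gr$ via Beauville--Laszlo) are sound.
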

\begin{proof}
  Recall that an $S$-point of $\Hloc^I_G$ consists of
  \begin{enumerate}[i)]
  \item for all $i$ in $I$, a point $x_i$ of $(\Div^1_Q)(S)$,
  \item two $G$-bundles $\cE$ and $\cE'$ on $\Spec B^+_{\dR}(\sum_{i\in I}x_i)$,
  \item an isomorphism $\al:\cE_{\Spec B_{\dR}(\sum_{i\in I}x_i)}\ra^\sim\cE'|_{\Spec B_{\dR}(\sum_{i\in I}x_i)}$ of $G$-bundles.
  \end{enumerate}
  By applying Proposition \ref{ss:Weilrestrictionbundles} to $\Spec B_{\dR}^+(\sum_{i\in I}x_i)$ and $\Spec B_{\dR}(\sum_{i\in I}x_i)$ and using Proposition \ref{ss:FFcurvefieldextension}, we see that ii) and iii) are equivalent to two $H$-bundles $\cF$ and $\cF'$ on $\Spec B_{\dR}^+(\sum_{i\in I}m^{-1}(x_i))$ along with an isomorphism
  \begin{align*}
    \be:\cF|_{\Spec B_{\dR}(\sum_{i\in I}m^{-1}(x_i))}\ra^\sim\cF'|_{\Spec B_{\dR}(\sum_{i\in I}m^{-1}(x_i))}
  \end{align*}
  of $H$-bundles. Together with i), this is precisely the data parametrized by the above fiber product.
\end{proof}

\subsection{}\label{ss:plecticreflexgroup}
To proceed, we need the following elaboration of \cite[Proposition 5.2]{NS16} on \emph{plectic reflex groups}. Let $S$ be a maximal subtorus of $H$ over $F$, and take $T$ to be the Weil restriction $\R_{F/Q}S$. Then the $W_Q$-set $X_*^+(T)$ is induced from the $W_F$-set $X_*^+(S)$, so the $W_Q$-action on $X_*^+(T)$ naturally extends to a $W_{F/Q}^{\plec}$-action via the canonical map $W_Q\ra W_{F/Q}^{\plec}$ \cite[(4.1.5)]{Leo19}. For any $\mu$ in $X_*^+(T)$, write $W^{[\mu]}_{F/Q}$ for the stabilizer of $\mu$ in $W^{\plec}_{F/Q}$. 
\begin{prop*}
There exists an enumeration of representatives for $W_Q/W_F$ in $W_Q$ such that, under the resulting identification $W^{\plec}_{F/Q}\cong W_F^d\rtimes\fS_d$, the subgroup $W^{[\mu]}_{F/Q}$ corresponds to $\prod_{j=1}^kW_{F_j}^{\de_j}\rtimes\fS_{\de_j}$ for some partition $\de_1,\dotsc,\de_k$ of $[d]$ and finite separable extensions $F_1,\dotsc,F_k$ of $F$.
\end{prop*}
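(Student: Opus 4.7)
The plan is to unwind the plectic identifications explicitly, then adapt the choice of coset representatives to $\mu$ so that the coordinates of $\mu$ become constant on each $W_F$-orbit that appears.

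First, I would fix any initial system $g_1,\dotsc,g_d$ of representatives for $W_Q/W_F$ in $W_Q$. This yields the standard identifications $W^{\plec}_{F/Q}\cong W_F^d\rtimes\fS_d$ and $X_*^+(T)\cong X_*^+(S)^d$, under which $W_F^d$ acts coordinatewise through the $W_F$-action on $X_*^+(S)$, and $\fS_d$ permutes the coordinates. Writing $\mu=(\mu_i)_{i\in[d]}$ in these coordinates, the stabilizer $W^{[\mu]}_{F/Q}\subseteq W_F^d\rtimes\fS_d$ consists of exactly those pairs $((w_i)_{i\in[d]},\sigma)$ satisfying $w_i\mu_{\sigma^{-1}(i)}=\mu_i$ for every $i\in[d]$.

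Next, I would partition $[d]$ into blocks $\de_1,\dotsc,\de_k$ via the equivalence relation declaring $i\sim j$ if and only if $\mu_i$ and $\mu_j$ lie in the same $W_F$-orbit of $X_*^+(S)$, and then normalize the coordinates of $\mu$ within each block. For each $\de_j$, pick a reference index $i_0(j)\in\de_j$, set $\nu_j\coloneqq\mu_{i_0(j)}$, and choose $h_i\in W_F$ with $h_i\mu_i=\nu_j$ for each $i\in\de_j$. Replacing each $g_i$ by $g_ih_i^{-1}$ (still within the coset $g_iW_F$) yields a new system of representatives; since the identification $X_*^+(T)\cong X_*^+(S)^d$ depends on $g_i$ through the Galois twist used to identify each factor $X_*(S^{\io_i})$ with $X_*^+(S)$, the $i$-th coordinate of $\mu$ under the new identification is exactly $\nu_j$ for every $i\in\de_j$.

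With this normalization, the condition $w_i\mu_{\sigma^{-1}(i)}=\mu_i$ forces $\sigma$ to preserve the partition $\{\de_j\}$, since the $\nu_j$'s in distinct blocks lie in distinct $W_F$-orbits and so cannot be matched by any $w_i\in W_F$. Restricted to each block $\de_j$ the condition becomes $w_i\in\operatorname{Stab}_{W_F}(\nu_j)$, with the $\fS_{\de_j}$-component unconstrained. Because $X_*^+(S)$ is a discrete $W_F$-set, $\operatorname{Stab}_{W_F}(\nu_j)$ is open of finite index in $W_F$, hence equals $W_{F_j}$ for a unique finite separable extension $F_j$ of $F$. Assembling yields the desired identification $W^{[\mu]}_{F/Q}\cong\prod_{j=1}^kW_{F_j}^{\de_j}\rtimes\fS_{\de_j}$ inside $W_F^d\rtimes\fS_d$.

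The main obstacle will be the bookkeeping step of verifying that the substitution $g_i\mapsto g_ih_i^{-1}$ produces exactly the transformation $\mu_i\mapsto h_i\mu_i$ on coordinates. Making this rigorous requires carefully tracking how the identification $X_*^+(T)=\prod_i X_*(S^{\io_i})\cong X_*^+(S)^d$ depends on $g_i$, specifically through the $\ov Q$-isomorphism $S^{\io_i}\cong S^{\io_0}\otimes_{\ov Q,g_i}\ov Q$; everything else in the argument is a direct computation.
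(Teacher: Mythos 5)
Your proposal is correct and takes essentially the same approach as the paper's: both normalize the coset representatives so that $\mu$ has constant coordinates on each $W_F$-orbit block, then read off the stabilizer directly. The paper's version is terser, leaving the $g_i\mapsto g_ih_i^{-1}$ bookkeeping and the identification of $\operatorname{Stab}_{W_F}(\nu_j)$ with some $W_{F_j}$ implicit, whereas you spell both out.
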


\begin{proof}
Any enumeration of representatives for $W_Q/W_F$ in $W_Q$ yields an identification $X_*^+(T)\cong X_*^+(S)^d$ such that the action of $W^{\plec}_{F/Q}\cong W_F^d\rtimes\fS_d$ is the natural one. View $\mu=(\mu_h)_{h\in[d]}$ as an element of $X_*^+(S)^d$, and choose our enumeration of representatives such that, if $\mu_h$ and $\mu_{h'}$ lie in the same $W_F$-orbit, then $\mu_h=\mu_{h'}$. Partitioning $[d]$ according to these $W_F$-orbits and letting $W_{F_j}$ be their corresponding stabilizers in $W_F$ yields the desired result.
\end{proof}

\subsection{}\label{ss:divpullbackreflex}
Maintain the notation of Proposition \ref{ss:plecticreflexgroup}, and write $E$ for the field of definition of $\mu$. Let us introduce a version of the twisted diagonal morphism that encapsulates the relation between reflex fields and plectic reflex fields.
\begin{prop*}
  We have a natural commutative square
  \begin{align*}
    \xymatrix{\Div^1_E\ar[r]^-{m^{-1}} \ar[d] & \prod_{j=1}^k[(\Div^1_{F_j})^{\de_j}/\fS_{\de_j}]\ar[d] \\
    \Div^1_Q\ar[r]^-{m^{-1}} & [(\Div^1_F)^d/\fS_d]
    }
  \end{align*}
that realizes $\Div^1_E$ as an open and closed subspace of the fiber product. Moreover, we also obtain a commutative square with the same property after replacing the right arrow with $\prod_{j=1}^k\Div^{\#\de_j}_{F_j}\ra\Div^d_F$.
\end{prop*}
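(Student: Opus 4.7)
The plan is to construct all four arrows in the square, verify commutativity, and then identify $\Div^1_E$ as a component of the fiber product by unwinding everything via Lemma \ref{ss:mapsfromfetcovers}. The key structural input is the Galois-theoretic identification $W_E=W_Q\cap W^{[\mu]}_{F/Q}$ inside $W^{\plec}_{F/Q}$.

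First I would define the right vertical arrow $\prod_{j=1}^k[(\Div^1_{F_j})^{\de_j}/\fS_{\de_j}]\ra[(\Div^1_F)^d/\fS_d]$ from the finite \'etale maps $\Div^1_{F_j}\ra\Div^1_F$ of degree $[F_j:F]$ (constructed exactly as in Proposition \ref{ss:divpullback}) together with the Young subgroup inclusion $\prod_j\fS_{\de_j}\hra\fS_d$, and the left vertical arrow $\Div^1_E\ra\Div^1_Q$ analogously as the degree $[E:Q]$ \'etale map coming from $\Spd\breve{E}\ra\Spd\breve{Q}$. For the top arrow: given $S\ra\Div^1_E$, the bottom composition produces via Lemma \ref{ss:mapsfromfetcovers} the degree $d$ \'etale cover $T\coloneqq S\times_{\Div^1_Q}\Div^1_F\ra S$ equipped with its canonical map $T\ra\Div^1_F$. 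The $E$-structure refines this as
\begin{align*}
T\cong S\times_{\Div^1_E}\bigl(\Div^1_E\times_{\Div^1_Q}\Div^1_F\bigr)\cong\textstyle\coprod_l\bigl(S\times_{\Div^1_E}\Div^1_{F_l''}\bigr),
\end{align*}
where $E\otimes_QF=\prod_lF_l''$ is the decomposition into finite extensions of $F$ (i.e., into the $W_E$-orbits on $W_Q/W_F$). By Proposition \ref{ss:plecticreflexgroup} each such orbit is contained in a unique part $\de_j$, so that $F_{j(l)}$ sits naturally inside $F_l''$ and one obtains a map $\Div^1_{F_l''}\ra\Div^1_{F_{j(l)}}$. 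Aggregating the pieces $T_l''\coloneqq S\times_{\Div^1_E}\Div^1_{F_l''}$ over those $l$ with $j(l)=j$ produces a degree $|\de_j|$ cover $U_j\ra S$ with a map $U_j\ra\Div^1_{F_j}$, which via Lemma \ref{ss:mapsfromfetcovers} defines the desired $S$-point.

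Commutativity of the square is automatic by construction, since both routes $\Div^1_E\ra[(\Div^1_F)^d/\fS_d]$ reassemble to the same pair $(T\ra S,T\ra\Div^1_F)$. For the open and closed embedding property, I would unwind the fiber product via Lemma \ref{ss:mapsfromfetcovers}: its $S$-points classify a degree $d$ cover $T\ra S$ with a map to $\Div^1_F$, together with a compatible partition $T=\coprod_jU_j$ into $|\de_j|$-sized pieces each refining to $\Div^1_{F_j}$. The possible partition types form a discrete set parametrized by matchings of $W_E$-orbits on $W_Q/W_F$ with the plectic parts $(F_j,\de_j)$, and $\Div^1_E$ corresponds to the canonical matching, hence embeds as one of the open and closed components. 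Finally, the version with $\prod_j\Div^{\#\de_j}_{F_j}\ra\Div^d_F$ in place of the stacky quotient follows immediately, since by Proposition \ref{ss:divpullback} the image of $m^{-1}$ already lies in $\Div^{d,\circ}_F$, where the stacky and sheaf quotients coincide.

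The main obstacle is the Galois-theoretic matching between the decomposition $E\otimes_QF=\prod_lF_l''$ and the plectic data $(F_j,\de_j)$: verifying that each $l$-orbit is contained in a unique $\de_j$ and that the canonical inclusion $F_j\hra F_l''$ producing the map $\Div^1_{F_l''}\ra\Div^1_{F_j}$ is well-defined. This amounts to translating between double cosets $W_E\backslash W_Q/W_F$, $W_E$-orbits on $W_Q/W_F$, and the stabilizer description of $\mu$ from Proposition \ref{ss:plecticreflexgroup}.
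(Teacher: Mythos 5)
Your proposal is correct and rests on the same essential Galois-theoretic input as the paper — the identification $W_E = W_Q \cap \prod_j W_{F_j}^{\de_j}\rtimes\fS_{\de_j}$ from Proposition \ref{ss:plecticreflexgroup} — but it executes at a different level of abstraction. The paper works entirely with finite \'etale fundamental groups: it invokes the identification $\pi_1^{\mathrm{f\acute{e}t}}(\Div^1_Q)\cong\Ga_Q$ and $\pi_1^{\mathrm{f\acute{e}t}}([(\Div^1_F)^d/\fS_d])\cong\Ga_F^d\rtimes\fS_d$ (via the proof of \cite[Theorem 16.3.1]{SW20}), identifies the two vertical covers with the open subgroups $\Ga_E$ and $\prod_j\Ga_{F_j}^{\de_j}\rtimes\fS_{\de_j}$, and reads off the decomposition of the pullback as a decomposition of a $\Ga_Q$-set. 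You instead construct the top arrow explicitly via Lemma \ref{ss:mapsfromfetcovers} and the Shapiro-type decomposition $E\otimes_QF=\prod_lF_l''$, then classify components of the fiber product by matching types. Your route is more concrete at the level of moduli descriptions but pushes the Galois bookkeeping into the obstacle you flag at the end. That step does go through: if $\gamma_{m(l)}$ represents the $W_E$-orbit $O_l\subseteq\de_{j(l)}$, then any $g\in W_E\cap\gamma_{m(l)}W_F\gamma_{m(l)}^{-1}$ fixes $\gamma_{m(l)}W_F$, so $g_{m(l)}\coloneqq\gamma_{m(l)}^{-1}g\gamma_{m(l)}\in W_F$ must stabilize $\mu_{m(l)}=\mu_{j(l)}$, whence $g\in\gamma_{m(l)}W_{F_{j(l)}}\gamma_{m(l)}^{-1}$; this gives the required $F_{j(l)}\hookrightarrow F_l''$. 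The paper's formalism avoids this unwinding by letting the fundamental group do the combinatorics, which is why its proof is shorter, but the two arguments are equivalent in content; your last sentence handling the $\prod_j\Div^{\#\de_j}_{F_j}\ra\Div^d_F$ variant via $\Div^{d,\circ}_F$ is exactly the paper's.
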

\begin{proof}
  The proof of \cite[Theorem 16.3.1]{SW20} shows that the finite \'etale fundamental group of $(\Div^1_F)^d$ is $\Ga_F^d$. Thus the finite \'etale fundamental group of $[(\Div^1_F)^d/\fS_d]$ is $\Ga_F^d\rtimes\fS_d$, and its finite \'etale cover
  \begin{align*}
    \textstyle\prod_{j=1}^k[(\Div^1_{F_j})^{\de_j}/\fS_{\de_j}]\ra[(\Div^1_F)^d/\fS_d]
  \end{align*}
  corresponds to the open subgroup $\prod_{j=1}^k\Ga_{F_j}^{\de_j}\rtimes\fS_{\de_j}$.

  Using our enumeration of representatives for $W_Q/W_F$ in $W_Q$ from Proposition \ref{ss:plecticreflexgroup}, we see that the map on fundamental groups induced by
  \begin{align*}
    m^{-1}:\Div^1_Q\ra[(\Div^1_F)^d/\fS_d]
  \end{align*}
  is precisely $\Ga_Q\ra\Ga^{\plec}_{F/Q}\cong\Ga_F^d\rtimes\fS_d$. Note that $W_E$ equals the intersection of $W^{[\mu]}_{F/Q}$ with $W_Q$, so Proposition \ref{ss:plecticreflexgroup} indicates that the pullback of
  \begin{align*}
    \textstyle\prod_{j=1}^k[(\Div^1_{F_j})^{\de_j}/\fS_{\de_j}]\ra[(\Div^1_F)^d/\fS_d]
  \end{align*}
  to $\Div_Q^1$ has a connected component corresponding to the open subgroup $\Ga_E$ of $\Ga_Q$ (using the fact that absolute Galois and Weil groups agree on profinite completions). The latter corresponds to the finite \'etale cover $\Div^1_E\ra\Div^1_Q$, as desired.

By Proposition \ref{ss:divpullback}, the image of $m^{-1}$ lies in $\Div^{d,\circ}_F$. The preimage of $\Div^{d,\circ}_F$ in $\prod_{j=1}^k[(\Div^1_{F_j})^{\de_j}/\fS_{\de_j}]$ lies in $\prod_{j=1}^k\Div^{\#\de_j,\circ}_{F_j}$, so \ref{ss:divpullback} implies the last statement.
\end{proof}

\subsection{}\label{ss:affineSchubertcellsplecticdiagram}
The plectic diagram in Proposition \ref{ss:Hlocplecticdiagram} restricts to affine Schubert varieties as follows. Let $\mu_\bullet=(\mu_i)_{i\in I}$ be an element of $X_*^+(T)^I$. For any $i$ in $I$, write $E_i$ for the field of definition of $\mu_i$. Applying Proposition \ref{ss:plecticreflexgroup} to $\mu_i$ yields a partition $\de_{i,1},\dotsc,\de_{i,k_i}$ of $[d] = d\times\{i\}$ and finite separable extensions $F_{i,1},\dotsc,F_{i,k_i}$ of $F$, and the proof of Proposition \ref{ss:plecticreflexgroup} also yields $\mu_{i,1},\dotsc,\mu_{i,k_i}$ in $X_*^+(S)$ such that $F_{i,j}$ is the field of definition of $\mu_{i,j}$. Applying Proposition \ref{ss:divpullbackreflex} yields a natural morphism
\begin{align*}
m^{-1}:\Div^1_{E_i}\ra\textstyle\prod_{j=1}^{k_i}\Div^{\#\de_j}_{F_{i,j}}.
\end{align*}
As we vary $i$ over $I$, we obtain a partition $\coprod_{i\in I}\{\de_{i,1},\dotsc,\de_{i,k_i}\}$ of $d\times I$ indexed by $\coprod_{i\in I}[k_i]$, elements $\mu_{i,j}$ of $X_*^+(S)$ indexed by $\coprod_{i\in I}[k_i]$, and finite separable extensions $F_{i,j}$ of $F$ indexed by $\coprod_{i\in I}[k_i]$.

Let us relabel $\coprod_{i\in I}[k_i]$ as $[k]$, where $k\coloneqq\sum_{i\in I}k_i$. Then we get a partition $\de_1,\dotsc,\de_k$ of $d\times I$ refining the partition given by $\big\{d\times\{i\}\big\}_{i\in I}$, elements $\mu_1,\dotsc,\mu_k$ of $X_*^+(S)$, and finite separable extensions $F_1,\dotsc,F_k$ of $F$. By abuse of notation, we also write $\mu_\bullet$ for the associated element of $X_*^+(S)^{d\times I}$ as in \ref{ss:affineSchubertcells}, i.e. the $(h,i)$-th entry of $\mu_\bullet$ equals $\mu_j$ for all $(h,i)$ in $\de_j\subseteq d\times I$.
\begin{prop*}
  We have a natural Cartesian square
  \begin{align*}
    \xymatrix{\Hloc^I_{G,\leq\mu_\bullet}\ar[r]^-{(m^{-1})^I}\ar[d]^-f & \Hloc^{(d)I}_{H,\leq\mu_\bullet}\ar[d]^-f\\
    \prod_{i\in I}\Div^1_{E_i}\ar[r]^-{(m^{-1})^I} & \prod_{j=1}^k\Div^{\#\de_j}_{F_j}.}
  \end{align*}
Moreover, we also obtain a Cartesian square after replacing $\Hloc^I_{G,\leq\mu_\bullet}$ and $\Hloc^{(d)I}_{H,\leq\mu_\bullet}$ with $\Hloc^I_{G,\mu_\bullet}$ and $\Hloc^{(d)I}_{H,\mu_\bullet}$, respectively.
\end{prop*}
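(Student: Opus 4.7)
The plan is to bootstrap from Proposition \ref{ss:Hlocplecticdiagram}. My first step is to base-change its Cartesian square along the bottom using Proposition \ref{ss:divpullbackreflex}, which organizes the passage to the reflex curves: the maps $\prod_{i\in I}\Div^1_{E_i}\to\prod_{i\in I}\Div^1_Q$ and $\prod_{j=1}^k\Div^{\#\de_j}_{F_j}\to\prod_{i\in I}\Div^d_F$ fit together compatibly via $(m^{-1})^I$. The result is a Cartesian square whose upper corners are
\begin{align*}
  \Hloc^I_G\times_{(\Div^1_Q)^I}\textstyle\prod_{i\in I}\Div^1_{E_i}\quad\text{and}\quad\Hloc^{(d)I}_H\times_{(\Div^d_F)^I}\textstyle\prod_{j=1}^k\Div^{\#\de_j}_{F_j}.
\end{align*}

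It will then suffice to verify that the closed substacks $\Hloc^I_{G,\leq\mu_\bullet}$ and $\Hloc^{(d)I}_{H,\leq\mu_\bullet}$, which live inside these upper corners by \ref{ss:affineSchubertcells}, correspond under the Cartesian identification. Since both are cut out by relative-position conditions at geometric points, this reduces to a pointwise comparison at a geometric point $\ov{s}$. The identification from Proposition \ref{ss:Hlocplecticdiagram} arises from applying Proposition \ref{ss:Weilrestrictionbundles} to $\Spec B^+_{\dR}(\sum_{i\in I}x_i)$, which via Proposition \ref{ss:FFcurvefieldextension} decomposes into a product $\prod_{(h,i)}\Spec B^+_{\dR}(x_{h,i})$ of local factors, where the $x_{h,i}$ are the $d$ preimages of each $x_i$ and are distinct by Proposition \ref{ss:divpullback}.

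The crucial step is to confirm that the relative position of a Hecke modification $\al$ at $x_i$, viewed in $X_*^+(T)\cong X_*^+(S)^d$, decomposes componentwise into the tuple of relative positions of the associated $H$-modification $\be$ at the $x_{h,i}$, viewed in $X_*^+(S)$. This follows from the identification $T=\R_{F/Q}S$ together with the local product decomposition above: a $T$-bundle on $\Spec B^+_{\dR}(x_i)$ corresponds to an $S$-bundle on $\Spec B^+_{\dR}(x_i)\otimes_QF$, and the Cartan decomposition transports factor-by-factor. Granting this, the bound ``relative position $\leq\mu_i$ at $x_i$'' translates exactly to ``relative position $\leq\mu_{h,i}$ at $x_{h,i}$'' for each $h$, matching the defining condition of $\Hloc^{d\times I}_{H,\leq\mu_\bullet}$ (noting that the sum over $(h',i')$ with $x_{h',i',\ov{s}}=x_{h,i,\ov{s}}$ in \ref{ss:affineSchubertcells} reduces to a single term by the distinctness of preimages). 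Since $\Hloc^{(d)I}_{H,\leq\mu_\bullet}$ is defined as the image of $\Hloc^{d\times I}_{H,\leq\mu_\bullet}$ under $\Sg$, this yields the first Cartesian square. For the moreover part, the open cells are cut out by the same conditions with ``$\leq$'' replaced by ``$=$'', so the same argument gives the second Cartesian square.

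The hard part will be pinning down the componentwise behavior of relative positions under Weil restriction. While conceptually clean, it requires carefully tracking the isomorphism $T\cong\R_{F/Q}S$ through the local decomposition of the Fargues--Fontaine curve at $x_i$, and checking that the Cartan decomposition is compatible with this product structure.
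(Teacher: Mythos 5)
Your proof follows the paper's own route exactly: base-change the Cartesian square of Proposition \ref{ss:Hlocplecticdiagram} along the top row of Proposition \ref{ss:divpullbackreflex}, then check on geometric points that the relative-position bounds cut out corresponding substacks. The paper disposes of the geometric-point check in a single sentence; your elaboration of how the Cartan decomposition for $G=\R_{F/Q}H$ factors across the $d$ distinct preimages $x_{h,i}$ of each $x_i$ is precisely the substance it leaves implicit, and it is correct.
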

\begin{proof}
  Pulling back Proposition \ref{ss:Hlocplecticdiagram} to the top row of Proposition \ref{ss:divpullbackreflex} gives a Cartesian square
  \begin{align*}
    \xymatrix{\Hloc^I_{G}\times_{(\Div^1_Q)^I}\prod_{i\in I}\Div^1_{E_i}\ar[r]^-{(m^{-1})^I}\ar[d]^-f & \Hloc^{(d)I}_{H}\times_{(\Div^d_F)^I}\prod_{j=1}^k\Div^{\#\de_j}_{F_j}\ar[d]^-f\\
    \prod_{i\in I}\Div^1_{E_i}\ar[r]^-{(m^{-1})^I} & \prod_{j=1}^k\Div^{\#\de_j}_{F_j}.}
  \end{align*}
From here, the desired results follow by checking on geometric points.
\end{proof}

\subsection{}\label{ss:plecticSatakesheaves}
Recall the notation of Proposition \ref{ss:symmetrizedSatakesheaves}.
\begin{cor*}
The pullback of $\IC^{(d)I}_{\mu_\bullet}$ under $(m^{-1})^I$ is naturally isomorphic to $\IC^I_{\mu_\bullet}$.
\end{cor*}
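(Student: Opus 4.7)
The plan is to deduce the identification from the Cartesian square of Proposition \ref{ss:affineSchubertcellsplecticdiagram} by base change, mimicking the argument used in the proof of Proposition \ref{ss:symmetrizedSatakesheaves} to establish $\Sigma^*(\IC^{(d)I}_{\mu_\bullet})\cong\IC^{d\times I}_{\mu_\bullet}$. First I verify that the cohomological shifts $d_{\mu_\bullet}$ agree on both sides: under $G_{\ov{Q}}\cong H_{\ov{Q}}^d$ coming from Weil restriction, $2\rho_G$ decomposes as the sum over $h\in[d]$ of the pullbacks of $2\rho_H$, and $\mu_i$ corresponds to $(\mu_{h,i})_{h\in[d]}$, so $\ang{2\rho_G,\mu_i}=\sum_{h\in[d]}\ang{2\rho_H,\mu_{h,i}}$; summing over $i\in I$ yields the matching of shifts.

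Then base change for $j_!$ applied to the $\mu_\bullet$ version of the Cartesian square of Proposition \ref{ss:affineSchubertcellsplecticdiagram} gives
\[
(m^{-1})^{I,*}(j_{\mu_\bullet!}\La[d_{\mu_\bullet}])\cong j_{\mu_\bullet!}\La[d_{\mu_\bullet}]
\]
on $\Hloc^I_G\times_{(\Div^1_Q)^I}\prod_{i\in I}\Div^1_{E_i}$. Taking $\prescript{p}{}\cH^0$ of the right-hand side produces $\IC^I_{\mu_\bullet}$ by definition, so the corollary reduces to knowing that pullback by $(m^{-1})^{I,*}$ commutes with $\prescript{p}{}\cH^0$ when applied to $j_{\mu_\bullet!}\La[d_{\mu_\bullet}]$; equivalently, that $(m^{-1})^{I,*}\IC^{(d)I}_{\mu_\bullet}$ is perverse.

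I expect this perverse exactness to be the main obstacle, since $(m^{-1})^I$ is a closed embedding on the base and hence not smooth. To handle it, I would use that $\IC^{(d)I}_{\mu_\bullet}$ is universally locally acyclic over $\prod_{j=1}^k\Div^{\#\de_j}_{F_j}$ by Proposition \ref{ss:symmetrizedSatakesheaves}. By \cite[Proposition IV.2.15]{FS21}, ULA is preserved under arbitrary pullback on the base, so $(m^{-1})^{I,*}\IC^{(d)I}_{\mu_\bullet}$ is ULA over $\prod_{i\in I}\Div^1_{E_i}$. Combining this with \cite[Proposition VI.6.4]{FS21} reduces perversity to a fiberwise check, and the Cartesian square of Proposition \ref{ss:affineSchubertcellsplecticdiagram} identifies the fibers of $\Hloc^I_{G,\leq\mu_\bullet}$ over geometric points of the source with the corresponding fibers of $\Hloc^{(d)I}_{H,\leq\mu_\bullet}$, where both complexes restrict to the classical Schubert IC on products of affine Grassmannians and are manifestly perverse. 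As an alternative that sidesteps this analysis, one could pass v-locally to a lift of $(m^{-1})^I$ through the cover $\prod_{j=1}^k(\Div^1_{F_j})^{\de_j}\to\prod_{j=1}^k\Div^{\#\de_j}_{F_j}$ supplied by Proposition \ref{ss:divpullbackreflex}, and then combine Proposition \ref{ss:symmetrizedSatakesheaves} (fusion product structure of $\IC^{d\times I}_{\mu_\bullet}$) with Proposition \ref{ss:Hlocplecticdiagram} (identifying local Hecke stacks for $G$ via Weil restriction) to reduce directly to the unsymmetrized case.
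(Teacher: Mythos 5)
Your overall decomposition matches the paper's: handle the $j_{\mu_\bullet!}$ step by base change along the Cartesian square of Proposition \ref{ss:affineSchubertcellsplecticdiagram}, then show $(m^{-1})^{I*}$ commutes with $\prescript{p}{}\cH^0$. The explicit check that the shifts $d_{\mu_\bullet}$ agree is a fine sanity check, though it is already implicit in how the index set $d\times I$ and $\mu_\bullet$ were set up in \ref{ss:affineSchubertcells}--\ref{ss:affineSchubertcellsplecticdiagram}.

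Where you diverge is in the $\prescript{p}{}\cH^0$ step, and this is the part that needs tightening. The paper simply cites \cite[Proposition VI.7.4]{FS21}, which characterizes the relative perverse $t$-structure on $\Hloc$ through the constant term functor $\CT_B$; since $\CT_B$ commutes with arbitrary base change on the divisor base, this directly gives that $(m^{-1})^{I*}$ is $t$-exact on the relevant objects, with no need to separately invoke ULA or pass to geometric fibers. Your proposed route --- pull ULA through the base change via \cite[Proposition IV.2.15]{FS21} and then check perversity fiberwise --- is morally sound, but the citation of \cite[Proposition VI.6.4]{FS21} to ``reduce perversity to a fiberwise check'' misattributes what that proposition does: as used in Proposition \ref{ss:symmetrizedSatakesheaves}, VI.6.4 is a criterion deducing universal local acyclicity from local finite freeness of $\CT_B$, not a statement that ULA plus fiberwise perversity implies perversity. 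The fact you actually need is precisely the content packaged in VI.7.4, so you would end up re-deriving it. Your alternative sketch --- lifting $(m^{-1})^I$ v-locally through $\prod_j(\Div^1_{F_j})^{\de_j}$ and then invoking the fusion description of $\IC^{d\times I}_{\mu_\bullet}$ over the open locus together with Proposition \ref{ss:Hlocplecticdiagram} --- is a reasonable backup, since the image of $(m^{-1})^I$ lies in $\prod_j\Div^{\#\de_j,\circ}_{F_j}$ where the fusion product becomes an exterior tensor product; but it is not shorter than the paper's argument, and it still rests on the same constant-term machinery to compare the $G$- and $H$-sides. In short: correct strategy, but replace the VI.6.4 appeal with VI.7.4 and the gap closes.
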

\begin{proof}
Proposition \ref{ss:affineSchubertcellsplecticdiagram} and \cite[Proposition VI.7.4]{FS21} indicate that the $\prescript{p}{}\cH^0$ is preserved. The Cartesian square
  \begin{align*}
    \xymatrix{\Hloc^I_{G,\mu_\bullet}\ar[r]^-{(m^{-1})^I}\ar[d]^-{j_{\mu_\bullet}} & \Hloc^{(d)I}_{H,\mu_\bullet}\ar[d]^-{j_{\mu_\bullet}}\\
    \Hloc^I_{G}\times_{(\Div^1_Q)^I}\prod_{i\in I}\Div^1_{E_i}\ar[r]^-{(m^{-1})^I} & \Hloc^{(d)I}_{H}\times_{(\Div^d_F)^I}\prod_{j=1}^k\Div^{\#\de_j}_{F_j}}
  \end{align*}
and \cite[Proposition 22.19]{Sch17} indicate that the $j_{\mu_\bullet,!}$ is preserved.
\end{proof}

\subsection{}\label{ss:shtukaplecticdiagram}
Maintain the notation of Proposition \ref{ss:affineSchubertcellsplecticdiagram}, view $b$ as an element of $B(H)$ via the bijection $B(G)\ra^\sim B(H)$ from \cite[1.10]{Kot85}, and view $K$ as a compact open subgroup of $G(Q)=H(F)$. We arrive at the plectic diagram for moduli spaces of local shtukas.
\begin{prop*}
  We have a natural Cartesian square
  \begin{align*}
    \xymatrix{\Sht^I_{G,b,\leq\mu_\bullet,K}\ar[r]^-{(m^{-1})^I}\ar[d]^-{f_K} & \Sht^{(d)I}_{H,b,\leq\mu_\bullet,K}\ar[d]^-{f_K} \\
    \prod_{i\in I}\Div^1_{E_i}\ar[r]^-{(m^{-1})^I} & \prod_{j=1}^k\Div^{\#\de_j}_{F_j}.
    }
  \end{align*}
\end{prop*}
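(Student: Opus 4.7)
The plan is to build the Cartesian square in three stages: first for affine Grassmannians, then for unbounded shtuka spaces, and finally after cutting down via Schubert bounds and quotienting by $\ul{K}$.

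First, I would establish the plectic diagram for affine Grassmannians,
\begin{align*}
\xymatrix{\Gr^I_{G}\ar[r]^-{(m^{-1})^I}\ar[d]^-f & \Gr^{(d)I}_{H}\ar[d]^-f \\
\prod_{i\in I}\Div^1_Q\ar[r]^-{(m^{-1})^I} & \prod_{i\in I}\Div^d_F,}
\end{align*}
by imitating Proposition \ref{ss:Hlocplecticdiagram}. Given $(x_i)_{i\in I}$ in $(\Div^1_Q)^I(S)$, the open subspace $U\coloneqq X_{S,Q}\ssm\sum_{i\in I}x_i$ is an adic space whose preimage $m^{-1}(U)$ equals $X_{S,F}\ssm\sum_{i\in I}m^{-1}(x_i)$ by Proposition \ref{ss:FFcurvefieldextension}. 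Applying Proposition \ref{ss:Weilrestrictionbundles} to both $X_{S,Q}$ and $U$ (and to the restriction map between them) matches the data of a $G$-bundle on $X_{S,Q}$ equipped with a meromorphic modification along $\sum_i x_i$ with the data of an $H$-bundle on $X_{S,F}$ equipped with a meromorphic modification along $\sum_i m^{-1}(x_i)$.

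Next, I would pass to unbounded shtukas via the defining Cartesian squares $\Sht^{(d)I}_{G,b}=\ast\times_{\Bun_G,\cE_b}\Gr^{(d)I}_G$ and $\Sht^{(d)I}_{H,b}=\ast\times_{\Bun_H,\cE_b}\Gr^{(d)I}_H$. Corollary \ref{cor:geometrizeShapiro} supplies an isomorphism $c:\Bun_G\ra^\sim\Bun_H$ sending $\cE_b\in B(G)$ to $\cE_b\in B(H)$, and the Grassmannian plectic diagram is compatible with the two forget maps to $\Bun$ because both arise from (different special cases of) the Weil restriction equivalence underlying $c$. Pasting Cartesian squares then yields the plectic diagram for unbounded shtukas. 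Pulling back along the Schubert bounds via Proposition \ref{ss:affineSchubertcellsplecticdiagram} (or rather its Grassmannian analog, which descends from the $\Hloc$ version along the v-cover $\Gr\ra\Hloc$) produces a Cartesian square for the bounded shtuka spaces over $\prod_{j=1}^k\Div^{\#\de_j}_{F_j}$. Finally, under the canonical identification $G(Q) = H(F)$, the equivalence of Proposition \ref{ss:Weilrestrictionbundles} intertwines the $G(Q)$- and $H(F)$-actions on the two shtuka towers, so quotienting by $\ul{K}\subseteq\ul{G(Q)}=\ul{H(F)}$ preserves the Cartesian property and gives the claimed square.

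The principal verification is the compatibility of the Grassmannian plectic diagram with the maps to $\Bun_G$ and $\Bun_H$ via $c$, and with the $K$-actions. Both reduce to the naturality in $U$ of Proposition \ref{ss:Weilrestrictionbundles}: gluing, restricting, and postcomposing with group elements commute with the Weil restriction equivalence. Since this is essentially a restatement of the geometrization of Shapiro's lemma encoded in Corollary \ref{cor:geometrizeShapiro}, no serious new difficulty should arise.
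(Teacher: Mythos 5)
Your proposal is correct and rests on the same three ingredients as the paper's proof: Proposition \ref{ss:Weilrestrictionbundles} applied to the punctured curve, Corollary \ref{cor:geometrizeShapiro} to match $\cE_b$ across $\Bun_G\cong\Bun_H$, and Proposition \ref{ss:affineSchubertcellsplecticdiagram} to match Schubert bounds, followed by quotienting by $\ul{K}$. The paper takes a more compressed route: it writes down the $S$-points of $\Sht^I_{G,b,\leq\mu_\bullet}$ directly (which already folds in the Cartesian square over $\Bun_G$ and the boundedness), transforms that moduli data in one step, and then quotients; you instead build up in stages (Grassmannian plectic square, paste with $\ast\to\Bun$, pull back along Schubert bounds, quotient). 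Your decomposition is somewhat longer but equivalent, and the one thing it requires that the paper elides — a Grassmannian version of the plectic diagram and of Proposition \ref{ss:affineSchubertcellsplecticdiagram} — you correctly obtain by pulling back the $\Hloc$ versions along the v-cover $\Gr\ra\Hloc$, so there is no gap.
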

\begin{proof}
  Note that an $S$-point of $\Sht^I_{G,b,\leq\mu_\bullet}$ consists of
  \begin{enumerate}[i)]
  \item for all $i$ in $I$, a point $D_i$ of $(\Div^1_{E_i})(S)$,
  \item an isomorphism $\al:\cE_1|_{X_{S,Q}\ssm\sum_{i\in I}D_i}\ra^\sim\cE_b|_{X_{S,Q}\ssm\sum_{i\in I}D_i}$ of $G$-bundles that is meromorphic along $\sum_{i\in I}D_i$ and satisfies the relative position bound from Definition \ref{ss:affineSchubertcells}.a).
  \end{enumerate}
  By applying Proposition \ref{ss:Weilrestrictionbundles} to $X_{S,Q}\ssm\sum_{i\in I}D_i$, using Corollary \ref{cor:geometrizeShapiro} to identify the bundles, and using Proposition \ref{ss:affineSchubertcellsplecticdiagram} to compare relative position bounds, we see that ii) is equivalent to an isomorphism
  \begin{align*}
    \be:\cE_1|_{X_{S,F}\ssm\sum_{i\in I}m^{-1}(D_i)}\ra^\sim\cE_b|_{X_{S,F}\ssm\sum_{i\in I}m^{-1}(D_i)}
  \end{align*}
  of $H$-bundles that is meromorphic along $\sum_{i\in I}m^{-1}(D_i)$ and satisfies the relative position bound from Definition \ref{ss:affineSchubertcells}.b). Therefore we obtain a Cartesian square
  \begin{align*}
    \xymatrix{\Sht^I_{G,b,\leq\mu_\bullet}\ar[r]^-{(m^{-1})^I}\ar[d]^-{f} & \Sht^{(d)I}_{H,b,\leq\mu_\bullet}\ar[d]^-{f} \\
    \prod_{i\in I}\Div^1_{E_i}\ar[r]^-{(m^{-1})^I} & \prod_{j=1}^k\Div^{\#\de_j}_{F_j}.
    }    
  \end{align*}
Quotienting the top row by $\ul{K}$ yields the desired Cartesian square.
\end{proof}

\subsection{}\label{ss:maintheorem}
We now prove the plectic conjecture over local fields.
\begin{thm*}
The object $f_{K\natural}(\prescript{\prime}{}{\IC}_{\mu_\bullet}^I)$ of $D_{\mathrm{lis}}([*/\ul{G_b(Q)}],\La)^{B\prod_{i\in I}W_{E_i}}$ canonically lifts to an object of $D_{\mathrm{lis}}([*/\ul{G_b(Q)}],\La)^{B\prod_{j=1}^kW_{F_j}^{\de_j}\rtimes\fS_{\de_j}}$.
\end{thm*}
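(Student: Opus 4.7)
The plan is to define the desired lift as the pullback of the symmetrized homology $\mathrm{H}^{(d)I}_{\mu_\bullet}$ of Proposition \ref{ss:descendedhomology} along the twisted diagonal, so that the plectic action comes for free from Proposition \ref{ss:descendedhomology}, and then to check that this pullback reproduces $f_{K\natural}(\prescript{\prime}{}{\IC}^I_{\mu_\bullet})$. More precisely, Proposition \ref{ss:descendedhomology} says $\mathrm{H}^{(d)I}_{\mu_\bullet}$ arises via pullback from an object of $D_{\mathrm{lis}}([*/\ul{G_b(Q)}],\La)^{B\prod_{j=1}^k W_{F_j}^{\de_j}\rtimes\fS_{\de_j}}$. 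Pulling back along the morphism
\[
(m^{-1})^I:\textstyle\prod_{i\in I}\Div^1_{E_i}\longrightarrow\prod_{j=1}^k\bigl[(\Div^1_{F_j})^{\de_j}/\fS_{\de_j}\bigr]
\]
supplied by Proposition \ref{ss:divpullbackreflex} therefore yields an object of $D_{\mathsmaller{\blacksquare}}\bigl([*/\ul{G_b(Q)}]\times\prod_{i\in I}\Div^1_{E_i},\La\bigr)$ that is itself the pullback of an object of $D_{\mathrm{lis}}([*/\ul{G_b(Q)}],\La)^{B\prod_{j=1}^k W_{F_j}^{\de_j}\rtimes\fS_{\de_j}}$, which is the candidate lift.

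The remaining task is to identify $(m^{-1})^{I,*}\mathrm{H}^{(d)I}_{\mu_\bullet}$ with $f_{K\natural}(\prescript{\prime}{}{\IC}^I_{\mu_\bullet})$. By Proposition \ref{ss:divpullbackreflex}, the image of $(m^{-1})^I$ lies in $\prod_{j=1}^k\Div^{\#\de_j,\circ}_{F_j}$, which by the second half of Proposition \ref{ss:descendedhomology} is precisely the locus where $\mathrm{H}^{(d)I}_{\mu_\bullet}$ agrees with $f_{K\natural}(\prescript{\prime}{}{\IC}^{(d)I}_{\mu_\bullet})$. So it suffices to show $(m^{-1})^{I,*}f_{K\natural}(\prescript{\prime}{}{\IC}^{(d)I}_{\mu_\bullet})\cong f_{K\natural}(\prescript{\prime}{}{\IC}^I_{\mu_\bullet})$. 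For this I combine two ingredients: the Cartesian square
\[
  \xymatrix{\Sht^I_{G,b,\leq\mu_\bullet,K}\ar[r]^-{(m^{-1})^I}\ar[d]^-{f_K} & \Sht^{(d)I}_{H,b,\leq\mu_\bullet,K}\ar[d]^-{f_K}\\
  \prod_{i\in I}\Div^1_{E_i}\ar[r]^-{(m^{-1})^I} & \prod_{j=1}^k\Div^{\#\de_j}_{F_j}}
\]
of Proposition \ref{ss:shtukaplecticdiagram}, giving proper base change $(m^{-1})^{I,*}\circ f_{K\natural}\cong f_{K\natural}\circ(m^{-1})^{I,*}$, and Corollary \ref{ss:plecticSatakesheaves}, which identifies $(m^{-1})^{I,*}\IC^{(d)I}_{\mu_\bullet}\cong\IC^I_{\mu_\bullet}$; the same identification then holds for the primed versions by the same formal argument as the one used at the end of the proof of Proposition \ref{ss:descendedhomology} (compatibility of the double-dual embedding with pullback, cf.\ \cite[Proposition IV.2.15]{FS21} and \cite[Proposition VII.4.1]{FS21}).

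The main obstacle is ensuring that the proper base change step is legitimate. By Proposition \ref{ss:divpullback}, the morphism $(m^{-1})^I$ is a closed embedding, while by Corollary \ref{cor:openaffineSchubertcells} and its shtuka counterpart the morphism $f_K$ is compactifiable and representable in locally spatial diamonds of finite $\dimtrg$, and the sheaf $\prescript{\prime}{}{\IC}^{(d)I}_{\mu_\bullet}$ is universally locally acyclic over the base (again by Proposition \ref{ss:symmetrizedSatakesheaves} plus the compatibility of ULA with the double-dual embedding, \cite[Proposition VII.4.1]{FS21}). These hypotheses ensure that $f_{K\natural}$ commutes with arbitrary base change along $(m^{-1})^I$, completing the identification and hence the proof.
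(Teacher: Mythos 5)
Your proposal is correct and follows essentially the same route as the paper: pull back the descended object $\mathrm{H}^{(d)I}_{\mu_\bullet}$ of Proposition \ref{ss:descendedhomology} along $(m^{-1})^I$, use its restriction to the open locus $\prod_{j=1}^k\Div^{\#\de_j,\circ}_{F_j}$ to match it with $f_{K\natural}(\prescript{\prime}{}{\IC}^{(d)I}_{\mu_\bullet})$, and then use the shtuka plectic square together with Corollary \ref{ss:plecticSatakesheaves} and the compatibilities \cite[Proposition IV.2.15]{FS21}, \cite[Proposition VII.4.1]{FS21} to identify the result with $f_{K\natural}(\prescript{\prime}{}{\IC}^I_{\mu_\bullet})$. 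One small correction: the base-change step does not need any hypotheses of compactifiability, finite $\dimtrg$, or universal local acyclicity --- $f_{K\natural}$ commutes with arbitrary pullback unconditionally because it is a left adjoint in the solid formalism, which is exactly \cite[Proposition VII.3.1.(iii)]{FS21} (it is not ``proper base change''), so the paragraph of justification you worry about is unnecessary; and the final descent to the stated target category uses the commutative square of $(m^{-1})^I$ over the classifying stacks of Weil groups, which is worth making explicit.
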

\begin{proof}
  First, I claim that the pullback of $f_{K\natural}(\prescript{\prime}{}{\IC}_{\mu_\bullet}^{(d)I})$ under $(m^{-1})^I$ is naturally isomorphic to $f_{K\natural}(\prescript{\prime}{}{\IC}_{\mu_\bullet}^I)$. By applying Corollary \ref{ss:plecticSatakesheaves}, \cite[Proposition IV.2.15]{FS21}, and \cite[Proposition VII.4.1]{FS21}, we see that $(m^{-1})^{I*}(\prescript{\prime}{}{\IC}^{(d)I}_{\mu_\bullet})=\prescript{\prime}{}{\IC}^I_{\mu_\bullet}$. Hence the claim follows from combining Proposition \ref{ss:shtukaplecticdiagram} with \cite[Proposition VII.3.1.(iii)]{FS21}.

By modifying the claim using Proposition \ref{ss:divpullback} and Proposition \ref{ss:descendedhomology}, we also see that the pullback of $\mathrm{H}^{(d)I}_{\mu_\bullet}$ under $(m^{-1})^I$ is naturally isomorphic to $f_{K\natural}(\prescript{\prime}{}{\IC}^I_{\mu_\bullet})$. Proposition \ref{ss:descendedhomology} also indicates that $\mathrm{H}^{(d)I}_{\mu_\bullet}$ arises via pullback from $D_{\mathrm{lis}}([*/\ul{H_b(F)}],\La)^{B\prod_{j=1}^kW_{F_j}^{\de_j}\rtimes\fS_{\de_j}}$, so the identification $G_b(Q)=H_b(F)$ and the commutative square
  \begin{align*}
    \xymatrix{\prod_{i\in I}\Div^1_{E_i}\ar[r]^-{(m^{-1})^I}\ar[d] & \prod_{j=1}^k[(\Div^1_{F_j})^{\de_j}/\fS_{\de_j}]\ar[d] \\
    [*/\ul{\prod_{i\in I}W_{E_i}}]\ar[r] & [*/\ul{\prod_{j=1}^kW_{F_j}^{\de_j}\rtimes\fS_{\de_j}}]
    }
  \end{align*}
enable us to conclude.
\end{proof}

\subsection{}\label{ss:TheoremC}
Let us specialize Theorem \ref{ss:maintheorem} to the situation of \ref{ss:localShimuravarieties}, proving Theorem C. Write $f_K$ for the structure morphism $\breve\cM_{G,b,\mu,K}\ra\Spa\breve{E}$, and recall that $Rf_{K!}\La$ is naturally an object of $D_{\mathrm{lis}}([*/\ul{G_b(\bQ_p)}],\La)^{BW_E}$ \cite[Theorem IX.3.1]{FS21}.
\begin{cor*}
  The object $Rf_{K!}\La$ canonically lifts to an object of
  \begin{align*}
    D_{\mathrm{lis}}([*/\ul{G_b(\bQ_p)}],\La)^{B\prod_{j=1}^kW_{F_j}^{\de_j}\rtimes\fS_{\de_j}}.
  \end{align*}
  \begin{proof}
    By \cite[(7.5.3)]{Hub96}, $\La$ is isomorphic to $Rf_K^!\La(-d_\mu)[-2d_\mu]$. Therefore $Rf_{K!}\La$ is isomorphic to $Rf_{K!}Rf_K^!\La(-d_\mu)[-2d_\mu]$. As in the proof of \cite[Theorem IX.3.1]{FS21}, we can identify $Rf_{K!}Rf_K^!\La$ with $f_{K\natural}(\prescript{\prime}{}{\IC}_\mu)$, which canonically lifts to an object of $D_{\mathrm{lis}}([*/\ul{G_b(\bQ_p)}],\La)^{B\prod_{j=1}^kW_{F_j}^{\de_j}\rtimes\fS_{\de_j}}$ by Theorem \ref{ss:maintheorem}. Furthermore, because $d_\mu=\sum_{j=1}^kd_{\mu_j}\cdot\#\de_j$, we explicitly see that $\La(-d_\mu)$ also canonically lifts to an object of $D_{\mathrm{lis}}([*/\ul{G_b(\bQ_p)}],\La)^{B\prod_{j=1}^kW_{F_j}^{\de_j}\rtimes\fS_{\de_j}}$. Hence taking shifts and tensor products yields the desired result.
  \end{proof}
\end{cor*}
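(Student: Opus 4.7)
The plan is to reduce the statement to Theorem \ref{ss:maintheorem} by rewriting $Rf_{K!}\La$ as the compactly supported pushforward of the Satake sheaf attached to $\mu$, up to a Tate twist and shift. Since $\breve\cM_{G,b,\mu,K}$ is smooth of dimension $d_\mu$ over $\breve E$, Huber's Poincar\'e duality \cite[(7.5.3)]{Hub96} supplies an isomorphism $\La \cong Rf_K^!\La(-d_\mu)[-2d_\mu]$. Applying $Rf_{K!}$ gives
\[
Rf_{K!}\La \;\cong\; \bigl(Rf_{K!}Rf_K^!\La\bigr)(-d_\mu)[-2d_\mu].
\]
Following the argument of \cite[Theorem IX.3.1]{FS21}, the object $Rf_{K!}Rf_K^!\La$ may then be identified with $f_{K\natural}(\prescript{\prime}{}{\IC}_\mu)$, using that $\breve\cM_{G,b,\mu}^{\diamondsuit}$ is the pullback of $\Sht_{G,b,\leq\mu}$ over $\Spd\breve E$ and that, for the minuscule cocharacter $\mu$ appearing in a local Shimura datum, $\IC_\mu$ reduces (up to shift) to a constant sheaf.

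With this identification in hand, Theorem \ref{ss:maintheorem} applied with $I$ a singleton immediately produces a canonical plectic lift of $f_{K\natural}(\prescript{\prime}{}{\IC}_\mu)$ to $D_{\mathrm{lis}}([*/\ul{G_b(\bQ_p)}],\La)^{B\prod_{j=1}^k W_{F_j}^{\de_j}\rtimes\fS_{\de_j}}$. To obtain the analogous lift of $Rf_{K!}\La$ itself, I must lift the Tate twist $\La(-d_\mu)$ compatibly. For this I would exploit the numerical identity
\[
d_\mu \;=\; \sum_{j=1}^k d_{\mu_j}\cdot \#\de_j,
\]
which follows by computing $\langle 2\rho,\mu\rangle$ orbit by orbit under the plectic action of Proposition \ref{ss:plecticreflexgroup}. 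This identity expresses the cyclotomic character of $W_{\bQ_p}$ on $\La(-d_\mu)$ as the product of the $W_{F_j}$-cyclotomic characters raised to the power $d_{\mu_j}$ across the factors of the plectic group, providing the desired lift of $\La(-d_\mu)$. Taking the appropriate shift and tensor product then assembles the canonical plectic lift of $Rf_{K!}\La$.

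The main obstacle is verifying that this combinatorially constructed lift of the Tate twist is genuinely compatible with the plectic lift of $f_{K\natural}(\prescript{\prime}{}{\IC}_\mu)$ furnished by Theorem \ref{ss:maintheorem}. Concretely, after restricting along the twisted diagonal morphism $m^{-1}$ of Proposition \ref{ss:divpullbackreflex}, one must check that the two incarnations of the Tate twist---one inherited from $W_E$ via Poincar\'e duality, the other from the plectic factors $\prod_j W_{F_j}^{\de_j}\rtimes\fS_{\de_j}$---agree. The identity above accounts for the discrepancy in cohomological degree exactly, but this compatibility should be tracked carefully through the Cartesian square of Proposition \ref{ss:shtukaplecticdiagram} and the commutative diagram relating $\prod_{i}\Div^1_{E_i}$ to $\prod_j [(\Div^1_{F_j})^{\de_j}/\fS_{\de_j}]$ used in the proof of Theorem \ref{ss:maintheorem}.
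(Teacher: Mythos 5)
Your proposal is correct and takes essentially the same approach as the paper: Poincar\'e duality via \cite[(7.5.3)]{Hub96} reduces $Rf_{K!}\La$ to $f_{K\natural}(\prescript{\prime}{}{\IC}_\mu)$ up to a shift and Tate twist, Theorem \ref{ss:maintheorem} supplies the plectic lift of the latter, and the identity $d_\mu=\sum_{j=1}^k d_{\mu_j}\cdot\#\de_j$ handles the Tate twist. The compatibility concern you raise at the end is reasonable but is already implicitly resolved by the fact that the plectic lift of $\La(-d_\mu)$ is determined by its pullback under $(m^{-1})$, which is exactly the $W_E$-twist from duality.
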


\subsection{}\label{ss:Kunneth}
To extend our results to finite \'etale $Q$-algebras $F$, we need the K\"unneth formula for relative homology. This does not seem to be written down yet in the 5-functor formalism of \cite[Chapter VII]{FS21}, so we record it here. In this subsection, $\La$ is any solid $\wh\bZ^p$-algebra.
\begin{lem*}
Let $f:X\ra S$ and $g:Y\ra S$ be morphisms of small v-stacks, and let $A$ and $B$ be objects of $D_{\mathsmaller{\blacksquare}}(X,\La)$ and $D_{\mathsmaller{\blacksquare}}(Y,\La)$, respectively. Write $h:X\times_SY\ra S$ for the structure morphism. Then we have a natural isomorphism $h_\natural(\pr_1^*A\soliddotimesLambda\pr_2^*B)\cong f_\natural A\soliddotimesLambda g_\natural B$.
\end{lem*}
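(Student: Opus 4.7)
The plan is to reduce the K\"unneth formula to two formal ingredients in the 5-functor formalism of \cite[Chapter VII]{FS21}: a projection formula for $(-)_\natural$, and base change for $(-)_\natural$ along $(-)^*$ on the Cartesian square
\begin{align*}
  \xymatrix{X\times_SY\ar[r]^-{\pr_2}\ar[d]^-{\pr_1} & Y\ar[d]^-g \\
  X\ar[r]^-f & S.}
\end{align*}
Factoring $h=f\circ\pr_1$ decouples the two ``legs'' of the product, so each factor can be handled separately.

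First I would rewrite $h_\natural=f_\natural\circ\pr_{1\natural}$ and strip off the right-hand factor via the projection formula for $\pr_{1\natural}$, applied with $\pr_2^*B$ playing the role of the ``extra'' tensorand:
\begin{align*}
\pr_{1\natural}(\pr_1^*A\otimes_\Lambda\pr_2^*B)\cong A\otimes_\Lambda\pr_{1\natural}\pr_2^*B.
\end{align*}
Next, I would apply the base change isomorphism $\pr_{1\natural}\pr_2^*\cong f^*g_\natural$, which is adjoint to the tautological exchange of $\pr_1^*f^*\cong\pr_2^*g^*$ and is precisely the $\natural$-variant of the base change already used implicitly elsewhere in the paper (compare the appeal to \cite[Proposition VII.3.1.(iii)]{FS21} in the proof of Theorem \ref{ss:maintheorem}). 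This yields
\begin{align*}
h_\natural(\pr_1^*A\otimes_\Lambda\pr_2^*B)\cong f_\natural(A\otimes_\Lambda f^*g_\natural B).
\end{align*}
Finally, one more invocation of the projection formula, this time for $f_\natural$ with $g_\natural B$, gives $f_\natural A\otimes_\Lambda g_\natural B$, as desired.

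The main obstacle is checking that both the projection formula and the base change isomorphism for the left adjoint $(-)_\natural$ are indeed available in the solid formalism of \cite[Chapter VII]{FS21}. The projection formula map $f_\natural(A\otimes_\Lambda f^*C)\to f_\natural A\otimes_\Lambda C$ is constructed formally from the unit $A\to f^*f_\natural A$ together with the symmetric monoidality of $f^*$; showing it is an isomorphism boils down to the $D_\mathsmaller{\blacksquare}(S,\La)$-linearity of $f_\natural$, which is built into the solid 5-functor package. The base change map $\pr_{1\natural}\pr_2^*\to f^*g_\natural$ is the left-adjoint mate of the symmetric monoidal commutativity of pullbacks along the Cartesian square; its invertibility may be verified v-locally on the base, where it reduces to the pre-existing base change results in \cite{FS21}. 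Once both are in hand, the four-step chain above is entirely formal.
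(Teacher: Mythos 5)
Your proof is correct and is essentially the same as the paper's: you factor $h=f\circ\pr_1$ and use the projection formula twice around a base change for $(-)_\natural$, whereas the paper factors $h=g\circ\pr_2$ and does the mirror-image computation. Both the projection formula and the base change you flag as ``the main obstacle'' are already supplied by \cite[Proposition VII.3.1.(i) and (iii)]{FS21}, which is precisely what the paper cites, so there is no gap.
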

\begin{proof}
  By \cite[Proposition VII.3.1.(i)]{FS21} and \cite[Proposition VII.3.1.(iii)]{FS21}, we have natural isomorphisms
  \begin{align*}
    \pr_{2\natural}(\pr_1^*A\soliddotimesLambda\pr_2^*B)\cong (\pr_{2\natural}\pr_1^*A)\soliddotimesLambda B \cong (g^*f_\natural A)\soliddotimesLambda B.
  \end{align*}
  Applying $g_\natural$ to both sides and using \cite[Proposition VII.3.1.(i)]{FS21} again yield
  \begin{gather*}
h_\natural(\pr_1^*A\soliddotimesLambda\pr_2^*B)\cong g_\natural((g^*f_\natural A)\soliddotimesLambda B)\cong f_\natural A\soliddotimesLambda g_\natural B.\qedhere
  \end{gather*}
\end{proof}

\begin{rem}\label{rem:fetalgebras}
  The results of \S\ref{s:FFcurves}--\S\ref{s:plectic} generalize to finite \'etale $Q$-algebras $F$. We explain how this works when $I$ is a singleton. Write $F$ as $\textstyle\prod_{l=1}^cF_l$, where $F_l$ is a degree $d_l$ finite separable extension of $Q$.
  \begin{enumerate}[a)]
  \item We have a natural Cartesian square
    \begin{align*}
      \xymatrix{\coprod_{l=1}^cX_{S,F_l}\ar[d]\ar[r]^-m & X_{S,Q}\ar[d]\\
      \Spa F\ar[r] & \Spa Q,}
    \end{align*}
    which induces a natural closed embedding $m^{-1}:\Div^1_Q\ra\textstyle\prod_{l=1}^c\Div^{d_l}_{F_l}$.
  \item Let $H$ be a reductive group scheme over $F$ with geometrically connected fibers. Write $H$ as $\coprod_{l=1}^cH_l$, where $H_l$ is a connected reductive group over $F_l$, and let $S_l$ be a maximal subtorus of $H_l$ over $F_l$. Then $T=\prod_{l=1}^c\R_{F_l/Q}S_l$ is a maximal subtorus of $G=\R_{F/Q}H$, and we can identify $X_*^+(T)$ with the product of the $W_Q$-sets induced from the $W_{F_l}$-sets $X_*^+(S_l)$. Hence the $W_Q$-action on $X_*^+(T)$ naturally extends to a $W^{\plec}_{F/Q}\coloneqq\prod_{l=1}^cW^{\plec}_{F_l/Q}$-action via the canonical map $W_Q\ra W^{\plec}_{F/Q}$.

  \item Let $\mu$ be in $X_*^+(T)$, and write $W^{[\mu]}_{F/Q}$ for its stabilizer in $W^{\plec}_{F/Q}$. Then there exists an enumeration of representatives for $W_Q/W_{F_l}$ in $W_Q$ for all $1\leq l\leq c$ such that, under the resulting identification $W^{\plec}_{F/Q}\cong\prod_{l=1}^cW_{F_l}^{d_l}\rtimes\fS_{d_l}$, the subgroup $W^{[\mu]}_{F/Q}$ corresponds to $\prod_{l=1}^c\prod_{j=1}^{k_l}W_{F_{l,j}}^{\de_{l,j}}\rtimes\fS_{\de_{l,j}}$, where the $\de_{l,1},\dotsc,\de_{l,k_l}$ is a partition of $[d_l]$, and the $F_{l,1},\dotsc,F_{l,k_l}$ are finite separable extensions of $F_l$.

  \item Write $E$ for the field of definition of $\mu$, and write $\mu_l$ for the component of $\mu$ in the factor induced from $X_+^*(S_l)$. Write $b_l$ for the component of $b$ in the factor $B(H_l)$, let $K_l$ be a compact open subgroup of $H_l(F_l)$, and take $K=\prod_{l=1}^cK_l$. Then we have a natural Cartesian square
    \begin{align*}
      \xymatrix{\Sht_{G,b,\leq\mu,K}\ar[r]^-{m^{-1}}\ar[d]^-{f_K} & \prod_{l=1}^c\Sht^{(d_l)}_{H_l,b_l,\leq\mu_l,K_l}\ar[d]^-{\prod_{l=1}^cf_K} \\
      \Div^1_E\ar[r]^-{m^{-1}} & \prod_{l=1}^c\prod_{j=1}^{k_l}\Div^{\#\de_{l,j}}_{F_{l,j}}.
      }
    \end{align*}
  \item By using Theorem \ref{ss:maintheorem}, part d), and Lemma \ref{ss:Kunneth}, one proves that the object $f_{K\natural}(\prescript{\prime}{}{\IC}_\mu)$ of $D_{\mathrm{lis}}([*/\ul{G_b(Q)}],\La)^{BW_E}$ canonically lifts to an object of $D_{\mathrm{lis}}([*/\ul{G_b(Q)}],\La)^{B\prod_{l=1}^c\prod_{j=1}^{k_l}W^{\de_{l,j}}_{F_{l,j}}\rtimes\fS_{\de_{l,j}}}$. Consequently, when in the situation of \ref{ss:localShimuravarieties}, the complex of compactly supported cohomology of $\breve\cM_{G,b,\mu,K}$ with coefficients in $\La$ also canonically lifts to an object of $D_{\mathrm{lis}}([*/\ul{G_b(\bQ_p)}],\La)^{B\prod_{l=1}^c\prod_{j=1}^{k_l}W^{\de_{l,j}}_{F_{l,j}}\rtimes\fS_{\de_{l,j}}}$.
  \end{enumerate}
\end{rem}

\section{Applications to Shimura varieties}\label{s:Shimuravarieties}
At this point, we shift focus to a global context. Our goal in this section is to prove Theorem A and Theorem B. We start by describing the relation between decomposition groups and plectic decomposition groups. Next, we recall $p$-adic uniformization results of Rapoport--Zink and Varshavsky for certain very special Shimura varieties, as well as analogous results from Appendix \ref{s:appendix} for the basic loci of unramified abelian type Shimura varieties. We conclude by using these results to prove Theorem A and Theorem B.

\subsection{}\label{ss:localglobalplecticgroup}
Let $F$ be a degree $d$ finite extension of $\bQ$, and write $\ov\bQ$ for the algebraic closure of $\bQ$ in $\bC$. Write $F_p$ for $F\otimes_\bQ\bQ_p$, and recall that $F_p\cong\prod_u F_u$, where $u$ runs over places of $F$ above $p$. Write $\Ga^{\plec}_{F_p/\bQ_p}$ for $\Aut_{F_p}(F\otimes_\bQ\ov\bQ_p)$, and fix an isomorphism $\eta:\bC\ra^\sim\bC_p$.
\begin{prop*}
  We have a natural Cartesian square
  \begin{align*}
    \xymatrix{\Ga_{\bQ_p}\ar[r]\ar[d] & \Ga^{\plec}_{F_p/\bQ_p}\ar[d] \\
    \Ga_\bQ\ar[r] & \Ga^{\plec}_{F/\bQ},
    }
  \end{align*}
where each arrow is a continuous injective homomorphism. Moreover, $\Ga^{\plec}_{F_p/\bQ_p}$ is naturally identified with $\prod_u\Ga^{\plec}_{F_u/\bQ_p}$.
\end{prop*}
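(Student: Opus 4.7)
The plan is to reduce the statement to bookkeeping with explicit parametrizations of the plectic groups in terms of embeddings, with the only nontrivial input being Krasner's lemma.

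I would first fix the setup. The isomorphism $\eta:\bC\ra^\sim\bC_p$ realizes $\ov\bQ$ as a subfield of $\ov\bQ_p$ (every $\bQ$-algebraic element of $\bC_p$ lies in $\ov\bQ_p$), and every $F\hookrightarrow\ov\bQ_p$ factors through $\ov\bQ$ since $F/\bQ$ is algebraic. Hence the $d$ embeddings $F\to\ov\bQ$ are in natural bijection with those $F\to\ov\bQ_p$, and using them I would fix compatible isomorphisms $F\otimes_\bQ\ov\bQ\cong\prod_\iota\ov\bQ$ and $F\otimes_\bQ\ov\bQ_p\cong\prod_\iota\ov\bQ_p$ via $f\otimes z\mapsto(\iota(f)z)_\iota$. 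Under these, $\Ga^{\plec}_{F/\bQ}$ (respectively $\Ga^{\plec}_{F_p/\bQ_p}$) is parametrized by pairs $(\pi,(\tau_\iota)_\iota)$ with $\pi\in\fS_d$ and $\tau_\iota\in\Ga_\bQ$ (respectively $\tau_\iota\in\Ga_{\bQ_p}$) satisfying $\tau_\iota\circ\iota=\pi(\iota)$, where the constraint reflects $F$-linearity (respectively $F_p$-linearity). An element $\sigma\in\Ga_\bQ$ acts as $\mathrm{id}\otimes\sigma$ and corresponds to $(\pi_\sigma,(\sigma)_\iota)$ with $\pi_\sigma(\iota)=\sigma\circ\iota$; similarly for $\sigma\in\Ga_{\bQ_p}$. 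The map $\Ga^{\plec}_{F_p/\bQ_p}\to\Ga^{\plec}_{F/\bQ}$ then has the direct description $(\pi,(\tau_\iota))\mapsto(\pi,(\tau_\iota|_{\ov\bQ}))$, and continuity of all arrows is immediate from the construction.

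Next I would verify injectivity and the Cartesian property using Krasner's lemma, which shows every finite extension of $\bQ_p$ inside $\ov\bQ_p$ is generated by a $\bQ$-algebraic element, so $\ov\bQ\cdot\bQ_p=\ov\bQ_p$. This immediately gives injectivity of the restriction $\Ga_{\bQ_p}\to\Ga_\bQ$ (its image being the decomposition subgroup at $p$), hence of each component map $\tau_\iota\mapsto\tau_\iota|_{\ov\bQ}$, so $\Ga^{\plec}_{F_p/\bQ_p}\to\Ga^{\plec}_{F/\bQ}$ is injective; injectivity of the horizontal arrows follows from the formula. For a compatible pair $(\sigma,(\pi,(\tau_\iota)))$ in the fiber product, matching of images forces $\pi=\pi_\sigma$ and $\tau_\iota|_{\ov\bQ}=\sigma$ for every $\iota$. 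Hence each $\tau_\iota\in\Ga_{\bQ_p}$ is the unique extension $\tilde\sigma\in\Ga_{\bQ_p}$ of $\sigma$, and this $\tilde\sigma$ is the unique lift to $\Ga_{\bQ_p}$.

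Finally, the decomposition $\Ga^{\plec}_{F_p/\bQ_p}\cong\prod_u\Ga^{\plec}_{F_u/\bQ_p}$ follows from $F_p\cong\prod_u F_u$, which yields $F\otimes_\bQ\ov\bQ_p\cong\prod_u(F_u\otimes_{\bQ_p}\ov\bQ_p)$; any $F_p$-algebra automorphism commutes with multiplication by the minimal idempotents of $F_p$ and hence preserves this factorization. The main obstacle is organizational: setting up the parametrization so that the map $\Ga^{\plec}_{F_p/\bQ_p}\to\Ga^{\plec}_{F/\bQ}$ and the images of elements of $\Ga_\bQ$ admit clean formulas. The only nontrivial mathematical input is Krasner's lemma.
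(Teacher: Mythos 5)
Your argument is correct and rests on the same essential input as the paper's, namely that $\ov\bQ\cdot\bQ_p=\ov\bQ_p$ inside $\bC_p$ (Krasner), equivalently that the restriction map $\Ga_{\bQ_p}\to\Ga_\bQ$ is injective. The only difference is presentational: you fix an explicit parametrization of the plectic groups by pairs $(\pi,(\tau_\iota)_\iota)$, whereas the paper argues intrinsically by noting that $F\otimes_\bQ\ov\bQ$ is the integral closure of $F$ in $F\otimes_\bQ\ov\bQ_p$ (so that the right vertical arrow is restriction) and, for the Cartesian property, that an element of $\Ga^{\plec}_{F_p/\bQ_p}$ whose restriction lies in $\Ga_\bQ$ preserves $\ov\bQ$ and hence, by $\bQ_p$-linearity, preserves $\ov\bQ_p$.
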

\begin{proof}
  Note that $F\otimes_\bQ\ov\bQ$ is the integral closure of $F$ in $F\otimes_\bQ\ov\bQ_p$. Hence restriction to $F\otimes_\bQ\ov\bQ$ yields the right arrow. The top arrow is given by $\ga\mapsto\id_F\otimes_\bQ\ga$, from which commutativity is immediate. Next, the natural identification $\Ga^{\plec}_{F_p/\bQ_p}\cong\prod_u\Ga^{\plec}_{F_u/\bQ_p}$ follows from the $F_p$-linear identification
  \begin{gather*}
    F\otimes_\bQ\ov\bQ_p\cong F_p\otimes_{\bQ_p}\ov\bQ_p\cong\textstyle\prod_uF_u\otimes_{\bQ_p}\ov\bQ_p.
  \end{gather*}
Finally, let $\de$ lie in $\Ga^{\plec}_{F_p/\bQ_p}$, and suppose that its restriction to $F\otimes_\bQ\ov\bQ$ lies in the image of $\Ga_\bQ$. Then $\de$ preserves $\ov\bQ$ in $F\otimes_\bQ\ov\bQ$, and its $\bQ_p$-linearity shows that $\de$ also preserves $\ov\bQ_p$ in $F\otimes_\bQ\ov\bQ_p$. Hence $\de$ arises from an element of $\Ga_{\bQ_p}$, as desired.
\end{proof}

\subsection{}\label{ss:localglobalplecticreflexgroup}
Let $H$ be a connected reductive group over $F$, and write $G\coloneqq\R_{F/\bQ}H$. Let $S$ be a maximal subtorus of $H$ over $F$, write $T\coloneqq\R_{F/\bQ}S$, and let $\mu$ be in $X_*^+(T)$. Write $E$ for the field of definition of $\mu$, and write $E_v$ for the field of definition of $\mu_{\bC_p}$.
\begin{cor*}
  We have a natural Cartesian square
  \begin{align*}
    \xymatrix{\Ga_{E_v}\ar[r]\ar[d] & \Ga^{[\mu]}_{F_p/\bQ_p}\ar[d]\\
    \Ga_E\ar[r] & \Ga^{[\mu]}_{F/\bQ},}
  \end{align*}
and $E_v$ is the closure of $E$ in $\bC_p$.
\end{cor*}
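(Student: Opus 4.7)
The plan is to deduce the corollary by intersecting the Cartesian square of Proposition \ref{ss:localglobalplecticgroup} with stabilizers of $[\mu]$. First I would check the following basic compatibility: the closed embedding $\Ga^{\plec}_{F_p/\bQ_p}\hra\Ga^{\plec}_{F/\bQ}$ from Proposition \ref{ss:localglobalplecticgroup} intertwines the actions on conjugacy classes of cocharacters. More precisely, under the identification of $X_*^+(T)$ induced by $\eta$ (so that the $F$-linear actions on $F\otimes_\bQ\ov\bQ$ and $F\otimes_\bQ\ov\bQ_p$ are compatible via restriction to $F\otimes_\bQ\ov\bQ$), the $\Ga^{\plec}_{F_p/\bQ_p}$-action on $X_*^+(T)$ is simply the restriction of the $\Ga^{\plec}_{F/\bQ}$-action. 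This follows by unwinding the construction of the embedding, which is purely formal.

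Once this is established, passing to the stabilizer of $[\mu]$ yields $\Ga^{[\mu]}_{F_p/\bQ_p}=\Ga^{\plec}_{F_p/\bQ_p}\cap\Ga^{[\mu]}_{F/\bQ}$ inside $\Ga^{\plec}_{F/\bQ}$. Combined with the characterizations $\Ga_E=\Ga_\bQ\cap\Ga^{[\mu]}_{F/\bQ}$ and $\Ga_{E_v}=\Ga_{\bQ_p}\cap\Ga^{[\mu]}_{F_p/\bQ_p}$, together with the Cartesianness of Proposition \ref{ss:localglobalplecticgroup}, one computes
\begin{align*}
\Ga_E\cap\Ga^{[\mu]}_{F_p/\bQ_p}
&=\Ga_\bQ\cap\Ga^{[\mu]}_{F_p/\bQ_p}
=\Ga_\bQ\cap\Ga^{\plec}_{F_p/\bQ_p}\cap\Ga^{[\mu]}_{F/\bQ}\\
&=\Ga_{\bQ_p}\cap\Ga^{[\mu]}_{F/\bQ}
=\Ga_{\bQ_p}\cap\Ga^{[\mu]}_{F_p/\bQ_p}=\Ga_{E_v},
\end{align*}
which is precisely the Cartesianness of the square in question.

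For the final claim, the closure $E\cdot\bQ_p$ of $E$ in $\bC_p$ is the finite extension of $\bQ_p$ whose absolute Galois group inside $\Ga_{\bQ_p}$ is the $\eta$-induced intersection $\Ga_{\bQ_p}\cap\Ga_E$. But this intersection coincides with $\Ga_{E_v}$ by the Cartesianness of Proposition \ref{ss:localglobalplecticgroup} applied to the subgroup $\Ga_E\subseteq\Ga_\bQ$: we have $\Ga_E\cap\Ga^{\plec}_{F_p/\bQ_p}=\Ga_{\bQ_p}\cap\Ga_E$, and by the displayed equalities above this equals $\Ga_{E_v}$. Thus $E_v$ is exactly $E\cdot\bQ_p$ inside $\bC_p$.

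The only nontrivial step is the verification of action-compatibility in the first paragraph, and even that is essentially a bookkeeping exercise since both the $\Ga^{\plec}_{F/\bQ}$- and $\Ga^{\plec}_{F_p/\bQ_p}$-actions on cocharacter classes arise from the same construction applied to $F\otimes_\bQ(-)$ with $(-)=\ov\bQ$ or $\ov\bQ_p$, and the embedding of plectic groups is by construction the restriction map induced by $\eta$. Everything else is formal manipulation of subgroups already provided by Proposition \ref{ss:localglobalplecticgroup} and the definitions of $E$ and $E_v$.
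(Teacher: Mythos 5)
Your proof is correct and follows essentially the same route as the paper's: identify $\Ga_E$ and $\Ga_{E_v}$ as the stabilizers of $\mu$ (resp.\ $\mu_{\bC_p}$) in $\Ga_\bQ$ (resp.\ $\Ga_{\bQ_p}$), then intersect the Cartesian square from Proposition \ref{ss:localglobalplecticgroup} with the $[\mu]$-stabilizer subgroups. You spell out the bookkeeping (the compatibility of the two plectic actions and the resulting chain of intersections) that the paper compresses into ``follows immediately,'' but the underlying idea is identical. One small remark: since the paper defines $\Ga^{[\mu]}_{F_p/\bQ_p}$ directly as the stabilizer of $[\mu]$ in $\Ga^{\plec}_{F_p/\bQ_p}$ viewed as a subgroup of $\Ga^{\plec}_{F/\bQ}$ (via the embedding induced by $\eta$), the identity $\Ga^{[\mu]}_{F_p/\bQ_p}=\Ga^{\plec}_{F_p/\bQ_p}\cap\Ga^{[\mu]}_{F/\bQ}$ you single out as the ``only nontrivial step'' is actually true by definition; there is no independent $\Ga^{\plec}_{F_p/\bQ_p}$-action to compare against.
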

\begin{proof}
Because $\Ga_E$ (respectively $\Ga_{E_v}$) equals the stabilizer of $\mu$ in $\Ga_\bQ$ (respectively $\Ga_{\bQ_p}$), this follows immediately from Proposition \ref{ss:localglobalplecticgroup}.
\end{proof}

\subsection{}\label{ss:Shimuravarieties}
Maintain the notation of \ref{ss:localglobalplecticreflexgroup}, and assume that $F$ is totally real. We will consider two situations: the \emph{Drinfeld} case in \ref{ss:Drinfeldcase}, and the \emph{unramified abelian} case in \ref{ss:unramifiedabelian}. In both situations, we will have a Shimura datum $(G,X)$, and take $\mu$ to be its inverse Hodge cocharacter. For any compact open subgroup $K$ of $G(\bA_{f})$, write $\Sh_K(G,X)$ for the associated Shimura variety over $E$, and write $\Sh_K(G,X)^{\an}_{\breve{E_v}}$ for its rigid analytification over $\breve{E}_v$.

Write $b$ for the unique basic element of $B(G_{\bQ_p},\mu_{\bC_p})$, and recall that $G$ has a canonical inner form $G'$ over $\bQ$ such that
\begin{enumerate}[a)]
\item $G'_{\bQ_p}$ is isomorphic to $(G_{\bQ_p})_b$ over $\bQ_p$,
\item $G'_{\bA_f^p}$ is isomorphic to $G_{\bA_f^p}$ over $\bA_f^p$,
\item $G'_\bR$ is anisotropic modulo center over $\bR$
\end{enumerate}
\cite[Proposition 3.1]{Han21}. 

\subsection{}\label{ss:Drinfeldcase}
Let us describe the Drinfeld case, which refers to one of the following two subcases:
\begin{enumerate}[i)]
\item Let $\De$ be a quaternion algebra over $F$, and suppose that $\De$ splits at a nonempty set of archimedean places $\infty_1,\dotsc,\infty_t$ of $F$. For all $1\leq i\leq t$, the composition $\eta\circ\infty_i$ induces a place $u_i$ of $F$ above $p$. Suppose that the places $u_1,\dotsc,u_t$ are distinct and that $\De$ is not split at said places.

  Take $H$ to be the unit group $\De^\times$ over $F$, and take $(G,X)$ to be the associated Shimura datum as in \cite[5.1]{Var98}.

\item Let $F_c$ be a totally imaginary quadratic extension of $F$, let $\De$ be a central simple algebra over $F_c$ of dimension $n^2$, and let $*$ be an involution of the second kind on $\De$ over $F$. Suppose that $*$ has signature $(n-1,1)$ at a nonempty set of archimedean places $\infty_1,\dotsc,\infty_t$ of $F$ and signature $(n,0)$ at all other archimedean places of $F$, For all $1\leq i\leq d$, fix an embedding $\ov\infty_i:F_c\ra\bC$ that extends $\infty_i:F\ra\bR$, so that $\eta\circ\ov\infty_i$ induces a place $w_i$ of $F_c$ above $p$. Suppose that the places of $F$ below the places $w_i$ are distinct and split in $F_c$, and suppose that $\De$ has Hasse invariant $1/n$ at the places $w_i$.

Take $H$ to be the general unitary group $\GU(\De,*)$ over $F$, and take $(G,X)$ to be the associated Shimura datum as in \cite[2.2]{Var98}.
\end{enumerate}
In both subcases, note that $\Sh_K(G,X)$ is projective over $E$.

\subsection{}\label{ss:Varshavskyuniformization}
Maintain the notation of \ref{ss:Drinfeldcase}, and  recall the following $p$-adic uniformization result of Rapoport--Zink and Varshavsky. Let $K_p$ be a maximal compact open subgroup of $G(\bQ_p)$, and let $K^p$ be a compact open subgroup of $G(\bA_{f}^p)$.
\begin{thm*}
  We have an isomorphism
\begin{align*}
    G'(\bQ)\bs(\breve\cM_{G_{\bQ_p},b,\mu_{\bC_p},K_p}\times\ul{G(\bA_{f}^p)/K^p})\ra^\sim\Sh_{K_pK^p}(G,X)^{\an}_{\breve{E}_v}
\end{align*}
of rigid spaces over $\breve{E}_v$ that is compatible the Weil descent datum and varying $K^p$.
\end{thm*}
\begin{proof}
  Note that the factors of $(G_{\bQ_p},b,\mu_{\bC_p})$ where $\mu_{\bC_p}\neq0$ are of the form considered in \cite[(3.54)]{RZ96}, modulo twisting by a central character. For these factors, Theorem \ref{ss:RapoportZinkspaces}, \cite[theorem (3.72)]{RZ96}, and compatibility with twisting by central characters show that the local Shimura variety at level $K_p$ is precisely Drinfeld upper half space. As for the factors where $\mu_{\bC_p}=0$, we use \cite[Proposition 23.3.1]{SW20} and the proof of \cite[Proposition 23.2.1]{SW20} to identify the associated tower of local Shimura varieties with the appropriate discrete spaces. With these identifications, the desired result is \cite[Theorem 5.3]{Var98} in subcase i) and \cite[First Main Theorem 2.13]{Var98} in subcase ii).
\end{proof}

\subsection{}\label{ss:Varshavskydecompositionplectic}
Maintain the notation of \ref{ss:Varshavskyuniformization}. We now prove Theorem B.
\begin{thm*}
The complex $R\Ga(\Sh_{K_pK^p}(G,X)_{\ov\bQ_p},\La)$ canonically lifts from an object of $D^b(\Ga_{E_v},\La)$ to an object of $D^b(\Ga^{[\mu]}_{F_p/\bQ_p},\La)$.
\end{thm*}
\begin{proof}
Begin by replacing $\ov\bQ_p$ with $\bC_p$. Next, \cite[(3.8.1)]{Hub96} identifies
\begin{align*}
R\Ga(\Sh_{K_pK^p}(G,X)_{\bC_p},\La)\ra^\sim R\Ga(\Sh_{K_pK^p}(G,X)_{\bC_p}^{\an},\La).
\end{align*}
By taking derived invariants, we may assume $K_pK^p$ is sufficiently small. Then $G'(\bQ)$ acts properly discontinuously on $\breve\cM_{G_{\bQ_p},b,\mu_{\bQ_p},K_p}\times_{\breve{E}_v}\ul{G(\bA_f^p)/K^p}$ \cite[Lemma 2.9 a)]{Var98}, so Theorem \ref{ss:Varshavskyuniformization} and the K\"unneth formula yield an isomorphism from $R\Ga(\Sh_{K_pK^p}(G,X)^{\an}_{\bC_p},\La)$ to
\begin{align}\label{eq:Kunnethresult}
R\Ga(G'(\bQ),R\Ga_c(\breve\cM_{G_{\bQ_p},b,\mu_{\bQ_p},K_p},\La)\otimes^\bL_\La C_c(G(\bA_f^p)/K^p,\La)).\tag{$\star$}
\end{align}
Consider the image of (\ref{eq:Kunnethresult}) in $D^b(W_{E_v},\La)$. Remark \ref{rem:fetalgebras}.e) canonically lifts this to an object of $D^b(W^{[\mu]}_{F_p/\bQ_p},\La)$, and since we are calculating the cohomology of an algebraic variety, the underlying object of $D^b(\La)$ is a perfect complex. Because $\Ga^{[\mu]}_{F_p/\bQ_p}$ is the profinite completion of $W^{[\mu]}_{F_p/\bQ_p}$, our object of $D^b(W^{[\mu]}_{F_p/\bQ_p},\La)$ must lie in the full subcategory $D^b(\Ga^{[\mu]}_{F_p/\bQ_p},\La)$, as desired.
\end{proof}
\begin{rem}
Maintain the notation of \ref{ss:Drinfeldcase}, and suppose we are in subcase ii). When all places of $F$ above $p$ are unramified in $F_c$, Deligne's theory of connected components and results of Rapoport--Zink \cite[theorem (6.36)]{RZ96} yield a generalization of Theorem \ref{ss:Varshavskyuniformization} that allows arbitrary compact open subgroups $K_p$ of $G(\bQ_p)$. This gives a corresponding generalization of Theorem B, with exactly the same proof.
\end{rem}

\subsection{}\label{ss:unramifiedabelian}
Let us describe the unramified abelian case. Choose a reductive group scheme over $\bZ_{(p)}$ with geometrically connected fibers whose generic fiber is isomorphic to $G$. By abuse of notation, we also write $G$ for this group scheme. Suppose that $p\neq2$, and take $(G,X)$ to be a Shimura datum of abelian type.

\subsection{}\label{ss:unramifiedabelianuniformization}
Maintain the notation of \ref{ss:unramifiedabelian}. In this case, we have the following $p$-adic uniformization result. Let $K^p$ be a compact open subgroup of $G(\bA_{f}^p)$. Write $\Sh_{G(\bZ_p)K^p}(G,X)^b_{\breve{E}_v}$ for the basic locus as in \ref{ss:hyperspecialleveluniformization}, which is an open rigid subspace of $\Sh_{G(\bZ_p)K^p}(G,X)_{\breve{E}_v}^{\an}$ preserved by the Weil descent datum over $\breve{E}_v$.
\begin{thm*}
  We have an isomorphism
  \begin{align*}
    G'(\bQ)\bs(\breve\cM_{G_{\bQ_p},b,\mu_{\bC_p},G(\bZ_p)}\times\ul{G(\bA_f^p)/K^p})\ra^\sim\Sh_{G(\bZ_p)K^p}(G,X)^b_{\breve{E}_v}
  \end{align*}
of rigid spaces over $\breve{E}_v$ that is compatible with the Weil descent datum and varying $K^p$.
\end{thm*}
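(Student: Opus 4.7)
The plan is to deduce this theorem from Theorem D of the appendix, which is the analogous $p$-adic uniformization result at the level of formal schemes, by passing to rigid generic fibers on both sides. First I would apply Theorem D with $K_p=G(\bZ_p)$ to obtain an isomorphism
\begin{align*}
G'(\bQ)\bs(\breve\fM_{G,b,\mu_{\bC_p}}\times\ul{G(\bA_f^p)/K^p})\ra^\sim\fS\fh_{G(\bZ_p)K^p}(G,X)^b
\end{align*}
of formal schemes over $\breve\cO_{E_v}$, compatibly with the Weil descent datum and varying $K^p$. The goal is then to convert this into the desired statement about rigid analytic spaces.

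On the right, the formal scheme $\fS\fh_{G(\bZ_p)K^p}(G,X)^b$ is defined as the completion of $\sS_{G(\bZ_p)K^p}(G,X)_{\breve\cO_{E_v}}$ along the basic locus in the special fiber, so its rigid generic fiber is, by the definition recalled in \ref{ss:hyperspecialleveluniformization}, the basic locus $\Sh_{G(\bZ_p)K^p}(G,X)^b_{\breve{E}_v}$ of the rigid analytification. On the left, Theorem \ref{ss:Kimspaces} identifies $\breve\fM_{G,b,\mu_{\bC_p}}$ with Kim's Rapoport--Zink space $\mathtt{RZ}_{G,b}$, whose rigid generic fiber is $\breve\cM_{G,b,\mu_{\bC_p},G(\bZ_p)}$, compatibly with the Weil descent datum and the $G_b(\bQ_p)$-action. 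The passage to rigid generic fibers obviously commutes with the discrete factor $\ul{G(\bA_f^p)/K^p}$.

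The remaining step is to show that the $G'(\bQ)$-quotient commutes with passing to rigid generic fibers. Since $K^p$ is a compact open subgroup of $G(\bA_f^p)$ and $G'_\bR$ is anisotropic modulo center by property c) of \ref{ss:Shimuravarieties}, an argument analogous to \cite[Lemma 2.9 a)]{Var98} shows that $G'(\bQ)$ acts properly discontinuously on the product of the formal scheme with $\ul{G(\bA_f^p)/K^p}$, so the formal-scheme quotient commutes with rigid generic fibers. I expect this proper discontinuity verification to be the main obstacle, since it requires adapting Varshavsky's argument from the projective (P)EL Drinfeld setting to Pappas--Rapoport's hyperspecial integral local Shimura varieties in the abelian type case, where the basic locus is only an open part of a non-projective variety. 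Once this is in hand, the compatibility with the Weil descent datum and varying $K^p$ follow by naturality from the corresponding compatibilities in Theorem D.
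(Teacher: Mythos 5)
Your proposal follows the same high-level strategy as the paper — deduce the rigid-analytic statement from the formal-scheme uniformization by passing to generic fibers — but the paper organizes the reduction differently in a way that bypasses the proper-discontinuity hurdle you identify. The paper applies the infinite-level version (Theorem \ref{thm:uniformization}) rather than the finite $K^p$-level form of Theorem D, passes to adic generic fibers at the level of v-sheaves/diamonds (citing \ref{ss:integrallocalshimuravarieties} to identify the generic fiber of $\breve\fM_{G_{\bZ_p},b,\mu_{\bC_p}}$ with $\breve\cM_{G_{\bQ_p},b,\mu_{\bC_p},G(\bZ_p)}$), and only quotients by $K^p$ at the very end, using that $\Sh_{G(\bZ_p)K^p}(G,X)^b_{\breve{E}_v}$ is \emph{defined} in \ref{ss:hyperspecialleveluniformization} as $\Sh_{G(\bZ_p)}(G,X)^b_{\breve{E}_v}/K^p$. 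In this formulation the passage to generic fibers is a base change of v-sheaves, so it commutes with the $G'(\bQ)$-quotient (a sheaf quotient) and with the inverse limit over $K^p$ purely formally; there is nothing additional to check beyond Theorem \ref{thm:uniformization}. Your plan, by contrast, works directly at finite $K^p$-level with the formal-scheme quotient and proposes to verify that the $G'(\bQ)$-quotient commutes with rigid generic fibers via a properly-discontinuous-action argument à la Varshavsky. This would work, but it recreates work that the diamond/v-sheaf formalism makes unnecessary, and it is not what the paper does. Two smaller remarks: you route the identification of $\breve\fM_{G,b,\mu_{\bC_p}}$'s generic fiber through Theorem \ref{ss:Kimspaces} (Kim's RZ spaces), whereas the paper simply invokes \ref{ss:integrallocalshimuravarieties}, which already records that the generic fiber of $\breve\fM_{G,b,\mu}$ is $\breve\cM_{G,b,\mu,G(\bZ_p)}$; and your description of $\Sh_{G(\bZ_p)K^p}(G,X)^b_{\breve{E}_v}$ as "the rigid generic fiber of the finite-level formal completion" does not match the paper's actual definition (it is defined as a $K^p$-quotient of the infinite-level rigid space), though the two agree.
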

\begin{proof}
  Adapt the notation of \ref{ss:hyperspecialleveluniformization}. Theorem \ref{thm:uniformization} yields an isomorphism
  \begin{align*}
    \breve\fU(G,X)\ra^\sim\fS\fh_{G(\bZ_p)}(G,X)^b
  \end{align*}
  of formal schemes over $\breve\cO_{E_v}$ that is compatible with the Weil descent datum and the $G(\bA_f^p)$-action. By \ref{ss:integrallocalshimuravarieties}, taking adic generic fibers gives an isomorphism
  \begin{align*}
    \varprojlim_{K^p}\left[G'(\bQ)\bs(\breve\cM_{G_{\bQ_p},b,\mu_{\bC_p},G(\bZ_p)}\times\ul{G(\bA_f^p)/K^p})\right]\ra^\sim\varprojlim_{K^p}\left[\Sh_{G(\bZ_p)K^p}(G,X)^b_{\breve{E}_v}\right]
  \end{align*}
of rigid spaces over $\breve{E}_v$. Thus quotienting by $K^p$ yields the desired result.
\end{proof}
\subsection{}\label{ss:unramifiedabeliandecompositionplectic}
Maintain the notation of \ref{ss:unramifiedabelianuniformization}. We now prove Theorem A.
\begin{thm*}
The complex $R\Ga_c(\Sh_{G(\bZ_p)K^p}(G,X)_{\bC_p}^b,\La)$ canonically lifts from an object of $D^b(\Ga_{E_v},\La)$ to an object of $D^b(\Ga^{[\mu]}_{F_p/\bQ_p},\La)$.
\end{thm*}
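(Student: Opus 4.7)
The plan is to mirror the proof of Theorem \ref{ss:Varshavskydecompositionplectic}, substituting the $p$-adic uniformization of Theorem \ref{ss:unramifiedabelianuniformization} for Varshavsky's. The basic locus $\Sh_{G(\bZ_p)K^p}(G,X)^b_{\bC_p}$ is the tube over the closed basic stratum of the special fiber of the integral canonical model $\sS_{G(\bZ_p)K^p}$, hence a quasi-compact smooth rigid space over $\bC_p$. In particular $R\Ga_c(\Sh_{G(\bZ_p)K^p}(G,X)^b_{\bC_p}, \La)$ is a perfect complex of $\La$-modules.

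Assume first that $K^p$ is sufficiently small so that $G'(\bQ)$ acts properly discontinuously on $\breve\cM_{G_{\bQ_p},b,\mu_{\bC_p},G(\bZ_p)} \times \ul{G(\bA_f^p)/K^p}$; this is possible since $G'_{\bR}$ is anisotropic modulo center. Pulling back Theorem \ref{ss:unramifiedabelianuniformization} to $\bC_p$ and applying the K\"unneth formula as in the proof of Theorem \ref{ss:Varshavskydecompositionplectic} yields a $W_{E_v}$-equivariant identification
\begin{align*}
R\Ga_c(\Sh_{G(\bZ_p)K^p}(G,X)^b_{\bC_p}, \La) \cong R\Ga(G'(\bQ), R\Ga_c(\breve\cM_{G_{\bQ_p},b,\mu_{\bC_p},G(\bZ_p)}, \La) \otimes^\bL_\La C_c(G(\bA_f^p)/K^p, \La)).
\end{align*}
By Remark \ref{rem:fetalgebras}.e) applied to the finite \'etale $\bQ_p$-algebra $F_p = \textstyle\prod_{u\mid p} F_u$, the local factor canonically lifts to $D^b(W^{[\mu]}_{F_p/\bQ_p}, \La)$; the discrete factor $C_c(G(\bA_f^p)/K^p, \La)$ carries the trivial plectic action, and applying $R\Ga(G'(\bQ),-)$ propagates this lift to $R\Ga_c(\Sh^b_{\bC_p}, \La)$. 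Corollary \ref{ss:localglobalplecticreflexgroup} identifies the resulting $W^{[\mu]}_{F_p/\bQ_p}$ with the decomposition subgroup of the global plectic group.

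Since $R\Ga_c(\Sh^b_{\bC_p}, \La)$ is a perfect $\La$-complex, any continuous $W^{[\mu]}_{F_p/\bQ_p}$-action on it automatically factors through the profinite completion $\Ga^{[\mu]}_{F_p/\bQ_p}$, giving the desired object of $D^b(\Ga^{[\mu]}_{F_p/\bQ_p}, \La)$. For general $K^p$, choose a finite-index normal subgroup $K'^p \triangleleft K^p$ small enough for the preceding steps to apply, and recover the statement for $K^p$ via $(K^p/K'^p)$-derived invariants along the finite \'etale Galois cover $\Sh^b_{G(\bZ_p)K'^p,\bC_p} \to \Sh^b_{G(\bZ_p)K^p,\bC_p}$, using the compatibility with varying level in both Theorem \ref{ss:unramifiedabelianuniformization} and Remark \ref{rem:fetalgebras}.e). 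The main obstacle I anticipate is verifying the K\"unneth identification above for $R\Ga_c$ on the open (rather than proper) basic locus; this is where the argument diverges most visibly from Theorem \ref{ss:Varshavskydecompositionplectic}, in which the projectivity of $\Sh_K(G,X)$ permitted one to work with $R\Ga$ and sidestep the question. Everything after that formula is formal.
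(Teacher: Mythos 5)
Your proposal matches the paper's proof: perfectness of $R\Ga_c(\Sh_{G(\bZ_p)K^p}(G,X)_{\bC_p}^b,\La)$ is supplied by \cite[(3.5.17)]{Hub96}, after which one argues exactly as in Theorem \ref{ss:Varshavskydecompositionplectic}, with the uniformization of Theorem \ref{ss:unramifiedabelianuniformization} in place of Varshavsky's. The obstacle you anticipate is in fact not one: the proof of Theorem \ref{ss:Varshavskydecompositionplectic} already produces $R\Ga(G'(\bQ),R\Ga_c(\breve\cM,\La)\otimes^\bL_\La C_c(G(\bA_f^p)/K^p,\La))$ from the properly-discontinuous quotient (so the right-hand side involves $R\Ga_c$ of the cover from the start), and projectivity enters there only to identify the left-hand side $R\Ga_c$ of the quotient with $R\Ga$ of $\Sh_K^{\an}$; here the statement is already phrased in terms of $R\Ga_c$, so that final replacement is simply unnecessary.
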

\begin{proof}
Using \cite[(3.5.17)]{Hub96} to show that the object $R\Ga_c(\Sh_{G(\bZ_p)K^p}(G,X)_{\bC_p}^b,\La)$ of $D^b(\La)$ is a perfect complex, this follows from arguing exactly as in the proof of Theorem \ref{ss:Varshavskydecompositionplectic}.
\end{proof}

\appendix
\section{Basic uniformization for unramified abelian type Shimura varieties at hyperspecial level}\label{s:appendix}
Our goal for the appendix is to prove Theorem D. First, we recall the theory of integral local Shimura varieties. Using their relation with affine Deligne--Lusztig varieties, we describe their sets of connected components via results of Chen--Kisin--Viehmann and Gleason. Next, we interweave this description with work of Kisin and Pappas--Rapoport to develop a theory of connected components for basic $p$-adic uniformization at hyperspecial level. We conclude by applying this theory to deduce Theorem D from Kim's results in the unramified Hodge type case.

\subsection{}\label{ss:integrallocalshimuravarieties}
Let $G$ a reductive group scheme over $\bZ_p$ with geometrically connected fibers. By abuse of notation, we also write $G$ for its generic fiber. Let $(G,b,\mu)$ be a local Shimura datum over $\bQ_p$, and write $E$ for the field of definition of $\mu$, which is unramified over $\bQ_p$. Write $r$ for $[E:\bQ_p]$, and write $q$ for $p^r$.

Write $\breve\fM_{G,b,\mu}$ for the small  v-sheaf over $\Spd\breve\cO_E$ as in \cite[Definition 25.1.1]{SW20}. Recall that $\breve\fM_{G,b,\mu}$ enjoys a Weil descent datum $\Phi$ over $\breve\cO_E$ as well as an action of $G_b(\bQ_p)$, and its fiber over $\Spd\breve{E}$ is naturally isomorphic to $\breve\cM_{G,b,\mu,G(\bZ_p)}$. Moreover, \cite[Proposition 2.30]{Gle21} identifies its reduction with the affine Deligne--Lusztig variety $X_\mu^G(b)$ as in \cite[Section 3.1.1]{Zhu17b}. 

When $p\neq2$ and $(G,b,\mu)$ is of abelian type, $\breve\fM_{G,b,\mu}$ is the diamond associated with a unique normal formal scheme that is flat and locally formally of finite type over $\breve\cO_E$ by \cite[Proposition 18.4.1]{SW20} and \cite[Theorem 2.5.4]{PR22}. By abuse of notation, we also write $\breve\fM_{G,b,\mu}$ for this formal scheme.

\subsection{}\label{ss:integrallocalshimuravarietiesconnectedcomponents}
Write $w_G$ for the homomorphism $G(\breve\bQ_p)\ra\pi_1(G)$ as in \cite[(7.1.1)]{Kot97}. Its restriction to $G_b(\bQ_p)$ surjects onto $\pi_1(G)^\vp$ \cite[(4.6.4)]{Kis17}\footnote{While this is stated for $(G,\mu)$ arising from a Shimura datum of Hodge type, the proof applies verbatim in our situation.}, and write $G_b(\bQ_p)^+$ for the kernel of this restriction.

We can describe the sets of connected components of $\breve\fM_{G,b,\mu}$ as follows.
\begin{prop*}
  Assume that $b$ is basic. Then
  \begin{enumerate}[a)]
  \item $G_b(\bQ_p)$ acts transitively on $\pi_0(\breve\fM_{G,b,\mu})$ with stabilizer $G_b(\bQ_p)^+$, so
    \begin{align*}
      \pi_0(\breve\fM_{G,b,\mu})\cong \pi_1(G)^\vp,
    \end{align*}
  \item $\Phi$ acts on $\pi_0(\breve\fM_{G,b,\mu})$ via translation by the image of
    \begin{align*}
      \mu+\vp(\mu)+\dotsb+\vp^{r-1}(\mu).
    \end{align*}
  \end{enumerate}
In particular, each connected component $\breve\fM_{G,b,\mu}^+$ of $\breve\fM_{G,b,\mu}$ is isomorphic.
\end{prop*}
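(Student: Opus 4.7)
The plan is to transfer the entire question to the affine Deligne--Lusztig variety $X_\mu^G(b)$ via Gleason's specialization theorem, and then invoke the Chen--Kisin--Viehmann description of $\pi_0(X_\mu^G(b))$ together with a direct analysis of the Weil descent datum. By \cite[Proposition 2.30]{Gle21} the reduction of $\breve\fM_{G,b,\mu}$ is $X_\mu^G(b)$, and Gleason's work on specialization for kimberlite-type v-sheaves provides a $G_b(\bQ_p)$-equivariant, $\Phi$-equivariant bijection
\begin{align*}
\pi_0(\breve\fM_{G,b,\mu}) \ra^\sim \pi_0(X_\mu^G(b)).
\end{align*}
Once we verify that $\breve\fM_{G,b,\mu}$ satisfies Gleason's hypotheses, the problem becomes one about $X_\mu^G(b)$ with its natural $G_b(\bQ_p)$-action and its $\sigma$-Frobenius descent datum.

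For part~(a), the Chen--Kisin--Viehmann theorem shows that, for basic $b$, the Kottwitz map $w_G$ induces a $G_b(\bQ_p)$-equivariant bijection from $\pi_0(X_\mu^G(b))$ onto a $\pi_1(G)^\vp$-torsor, on which $G_b(\bQ_p)$ acts through the surjection $w_G|_{G_b(\bQ_p)}\colon G_b(\bQ_p)\twoheadrightarrow\pi_1(G)^\vp$. Transitivity is then immediate, and the stabilizer is by definition $G_b(\bQ_p)^+=\ker(w_G|_{G_b(\bQ_p)})$, so after fixing a base point to trivialize the torsor we obtain the isomorphism claimed in (a).

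For part~(b), the Weil descent datum $\Phi$ is induced by the $q=p^r$-Frobenius $\sigma$ on $\breve\cO_E$; after the trivialization of (a), $\Phi$ acts on $\pi_1(G)^\vp$ by translation by a single element, which can be computed by evaluating $\Phi$ on any chosen base point. I would pick as base point an explicit $\mu$-admissible Schubert representative (say the class of $\mu(\xi)$ for a uniformizer $\xi$ of $\breve\bQ_p$) and use that $w_G(\mu(\xi))$ equals the image of $\mu$ in $\pi_1(G)$. The absolute $q$-Frobenius $\sigma$ decomposes into $r$ applications of the relative $\vp$-Frobenius, each of which contributes one term of the Galois orbit $\{\vp^i(\mu)\}_{i=0}^{r-1}$ to the Kottwitz invariant; since $b$ lies in the basic class of $B(G_{\bQ_p},\mu)$, the standard Kottwitz formulas for $w_G(b)$ then identify the accumulated translation with the image of $\mu+\vp(\mu)+\dotsb+\vp^{r-1}(\mu)$ in $\pi_1(G)^\vp$.

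The main obstacle will be the careful book-keeping in part~(b): distinguishing the relative $\vp$ on $\breve\bQ_p$ from the absolute $q$-Frobenius underlying $\Phi$, and correctly accumulating the $r$ twists so that the normalization matches. As a sanity check I would first carry out the computation for a torus, where $X_\mu^T(b)$ is essentially a point-set model and the answer reduces to Kottwitz's original formula, before transporting it to general reductive $G$ by functoriality through a maximal subtorus. The final assertion, that all connected components $\breve\fM_{G,b,\mu}^+$ are isomorphic, is then immediate from (a): $G_b(\bQ_p)$ acts by automorphisms on $\breve\fM_{G,b,\mu}$ over $\breve\cO_E$ and permutes connected components transitively.
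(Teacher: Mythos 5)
Your strategy mirrors the paper's: transfer the question to the affine Deligne--Lusztig variety $X_\mu^G(b)$ via Gleason's identification of the reduction, then invoke Chen--Kisin--Viehmann for the structure of $\pi_0$. For part~(a) this is exactly what the paper does, with one small step you elide: the paper explicitly notes that $b$ basic forces $(G,b,\mu)$ to be Hodge--Newton irreducible, which is the hypothesis under which \cite[Corollary 4.1.16]{CKV15} applies. Simply saying ``for basic $b$'' skips the reduction of the CKV hypothesis to the basic setting, although this is a short and standard remark.

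For part~(b) the approaches diverge in execution. The paper disposes of the $\Phi$-action in one stroke by citing \cite[Theorem A.4]{HZ20} alongside the CKV result, whereas you propose a from-scratch computation by tracking a base point such as $\mu(\xi)$ through the descent datum. The intuition (the translation should be the ``norm'' $\mu+\vp(\mu)+\dotsb+\vp^{r-1}(\mu)$ of $\mu$ over $E$) is correct, but your sketch glosses over the actual mechanism. The individual relative $p$-Frobenius twists do \emph{not} give descent data on $\breve\fM_{G,b,\mu}$, since $\mu$ is only defined over $E$ and not over $\bQ_p$, so one cannot literally compose $r$ contributions one step at a time. What one actually has is a single descent isomorphism over $\breve\cO_E$, and the Kottwitz invariant of its effect on a Schubert representative must be computed directly from the explicit formula for $\Phi$ on $X_\mu^G(b)$; the $r$ Galois conjugates appear because $\Phi$ involves conjugating by $b\sigma$ iterated $r$ times, picking up one copy of $\mu$ per iteration. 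This is precisely the kind of computation the cited reference packages cleanly. So your proposal is sound in outline, but part~(b) needs either the explicit formula for $\Phi$ on the ADLV with a careful Kottwitz-invariant calculation, or a citation to the reference that does this. The final assertion that all components are isomorphic does follow immediately from~(a) as you say.
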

\begin{proof}
Because $b$ is basic, $(G,b,\mu)$ is Hodge--Newton irreducible. Therefore everything follows from \cite[Theorem A.4]{HZ20} and \cite[Corollary 4.1.16]{CKV15}.
\end{proof}

\subsection{}\label{ss:pi1invariants}
We will need the following description of $\pi_1(G)^\vp$, which has already been used in the proof of \cite[(3.8.5)]{Kis17}. Recall that the image of $G^{\simplyc}(\bQ_p)$ in $G(\bQ_p)$ contains the commutator subgroup of $G(\bQ_p)$ \cite[2.0.2]{Del79}.
\begin{lem*}
We have a natural isomorphism $G^{\simplyc}(\bQ_p)\bs G(\bQ_p)/G(\bZ_p)\ra^\sim\pi_1(G)^\vp$.
\end{lem*}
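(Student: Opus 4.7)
The plan is to use the Kottwitz homomorphism $w_G$ from the previous subsection, restricted to $G(\bQ_p) \subseteq G(\breve\bQ_p)$; the image lies in $\pi_1(G)^\vp$ by Galois equivariance of $w_G$. That this descends to the double coset is immediate: $w_G$ vanishes on $G^{\simplyc}(\bQ_p)$ by functoriality and $\pi_1(G^{\simplyc}) = 0$, and it vanishes on $G(\bZ_p) \subseteq G(\breve\bZ_p)$ as is visible from the Cartan-decomposition description $w_G(k_1\mu(p)k_2) = [\mu]$ (any $\mu$ with $k_i \in G(\breve\bZ_p)$ can be taken to be trivial).

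For bijectivity, my plan is to bootstrap from the analogous statement over $\breve\bQ_p$. Combining the Cartan decomposition with the containment $Q^\vee \subseteq X_*(T^{\simplyc})$ (cocharacters in the coroot lattice lift through $G^{\simplyc} \to G^{\der}$) yields the short exact sequence
\begin{align*}
1 \to G(\breve\bZ_p) \cdot G^{\simplyc}(\breve\bQ_p) \to G(\breve\bQ_p) \xrightarrow{w_G} \pi_1(G) \to 1.
\end{align*}
Taking $\vp$-invariants produces a long exact sequence of pointed sets
\begin{align*}
1 \to \bigl(G(\breve\bZ_p) G^{\simplyc}(\breve\bQ_p)\bigr)^\vp \to G(\bQ_p) \to \pi_1(G)^\vp \to H^1\bigl(\vp, G(\breve\bZ_p) G^{\simplyc}(\breve\bQ_p)\bigr),
\end{align*}
so it remains to verify two things: the equality $\bigl(G(\breve\bZ_p) G^{\simplyc}(\breve\bQ_p)\bigr)^\vp = G(\bZ_p) \cdot G^{\simplyc}(\bQ_p)$, and the vanishing of the $H^1$.

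Both reduce to classical $H^1$-vanishings over $\breve\bQ_p$ and $\breve\bZ_p$. For a $\vp$-fixed element $g = ks$ with $k \in G(\breve\bZ_p)$ and $s \in G^{\simplyc}(\breve\bQ_p)$, the identity $k^{-1}\vp(k) = s\vp(s)^{-1}$ defines a cocycle valued in $G(\breve\bZ_p) \cap G^{\simplyc}(\breve\bQ_p) = G^{\simplyc}(\breve\bZ_p)$, whose triviality (Lang's theorem for the connected $G^{\simplyc}_{\ov\bF_p}$) lets us adjust $k$ and $s$ to be individually $\vp$-fixed. The $H^1$-vanishing follows from a nonabelian Mayer--Vietoris argument assembled from $H^1(\vp, G(\breve\bZ_p)) = 1$ (Lang for $G_{\ov\bF_p}$), $H^1(\vp, G^{\simplyc}(\breve\bQ_p)) = 1$ (Kneser--Steinberg over the strictly henselian $\breve\bQ_p$), and $H^1(\vp, G^{\simplyc}(\breve\bZ_p)) = 1$ (Lang again). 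The hard part will be carefully setting up this nonabelian Mayer--Vietoris and verifying that the connecting maps behave correctly, since the subgroup $G(\breve\bZ_p) G^{\simplyc}(\breve\bQ_p)$ is noncommutative and not a product; however, this bookkeeping is technical rather than substantive, given the three vanishings above.
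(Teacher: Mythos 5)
Your strategy---work directly with Kottwitz's $w_G$, establish the short exact sequence
\begin{align*}
1 \to G(\breve\bZ_p)\, G^{\simplyc}(\breve\bQ_p) \to G(\breve\bQ_p) \xrightarrow{\ w_G\ } \pi_1(G) \to 1
\end{align*}
from the Cartan decomposition, and take $\vp$-invariants---is a genuinely different route from the paper's. The paper instead reduces via a $z$-extension to the case where $G^{\der}$ is simply connected; there Kneser's theorem identifies $G^{\simplyc}(\bQ_p)\backslash G(\bQ_p)$ with $G^{\ab}(\bQ_p)$, and what remains is a computation for the torus $G^{\ab}$ that is settled by Lang over $\breve\bZ_p$. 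Your sketch, however, has two gaps that are more than bookkeeping.

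First, when $G^{\simplyc}\to G$ has a nontrivial central kernel $Z$ (that is, when $G^{\der}$ is not simply connected), the claimed equality $G(\breve\bZ_p)\cap G^{\simplyc}(\breve\bQ_p)=G^{\simplyc}(\breve\bZ_p)$ of subgroups of $G(\breve\bQ_p)$ does not literally make sense, and the Lang trick only adjusts $s$ so that it is $\vp$-fixed \emph{modulo} $Z(\breve\bQ_p)$. The resulting $\vp$-fixed element of the image of $G^{\simplyc}(\breve\bQ_p)$ need not lie in the image of $G^{\simplyc}(\bQ_p)$: the discrepancy is controlled by cohomology of $Z(\breve\bQ_p)$, which is nonzero already for $\mathrm{PGL}_2$. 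This is repairable---one applies Lang once more inside $G^{\simplyc}(\breve\bZ_p)$ and observes that the correction lands in $G(\bZ_p)$ precisely because it dies under $G^{\simplyc}\to G$---but that step is missing, and your argument as written tacitly assumes $Z=1$.

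Second, and more seriously, the $H^1$-vanishing step conflates two different cohomology sets. The boundary obstruction in the long exact sequence of $\vp$-invariants is a $\vp$-conjugacy class in $N\coloneqq G(\breve\bZ_p)G^{\simplyc}(\breve\bQ_p)$; for $G^{\simplyc}(\breve\bQ_p)$ alone, the set of $\vp$-conjugacy classes is $B(G^{\simplyc})$, which is far from a single point (it carries the Newton stratification). What Kneser and Steinberg give is the vanishing of \emph{continuous} Galois cohomology, $H^1(\bQ_p,G^{\simplyc})=1$, and via inflation this controls only cocycles of finite order. To invoke it you must first show the boundary class is of finite order---for instance by lifting $x\in\pi_1(G)^\vp$ to $g=\lambda(p)$ with $\lambda\in X_*(T)$, so that $g$ is $\vp^n$-fixed once $G$ splits over $\bQ_{p^n}$---and even then $N$ is a product of two subgroups that is neither a direct product nor an amalgam, so there is no standard Mayer--Vietoris for nonabelian $H^1$ to assemble the three vanishings you list. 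The paper's reduction to the abelian case via a $z$-extension avoids all of this: the only nonabelian vanishing it uses is Kneser's theorem in its ordinary Galois-cohomological form.
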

\begin{proof}
  First, assume that $G^{\der}$ is simply connected. Then Kneser's theorem gives an isomorphism $G^{\simplyc}(\bQ_p)\bs G(\bQ_p)\ra^\sim G^{\ab}(\bQ_p)$, and Lang's lemma implies that $G(\bZ_p)$ surjects onto $G^{\ab}(\bZ_p)$. Since $G^{\der}$ is simply connected, the evaluation map induces an isomorphism $(\pi_1(G)\otimes_\bZ\breve\bQ_p^\times)^\vp\ra^\sim G^{\ab}(\bQ_p)$ that identifies $(\pi_1(G)\otimes_\bZ\breve\bZ_p^\times)^\vp$ with $G^{\ab}(\bZ_p)$. Finally, the valuation map yields
  \begin{align*}
    (\pi_1(G)\otimes_\bZ\breve\bQ_p^\times)^\vp/(\pi_1(G)\otimes_\bZ\breve\bZ_p^\times)^\vp\ra^\sim\pi_1(G)^\vp.
  \end{align*}
  
  Next, consider general $G$. Since $G$ splits over an unramified extension of $\bQ_p$, there exists a $z$-extension $G'\ra G$ whose kernel $Z'$ is a product of unramified induced tori \cite[V.3.1]{DMOS82}. Lang's lemma implies that $G'(\bZ_p)$ surjects onto $G(\bZ_p)$, so Hilbert 90 and Shapiro's lemma give a surjection $G'^{\ab}(\bQ_p)/G'^{\ab}(\bZ_p)\ra G^{\simplyc}(\bQ_p)\bs G(\bQ_p)/G(\bZ_p)$ whose kernel equals the image of $Z'(\bQ_p)$. The sequence $\pi_1(Z')^\vp\ra\pi_1(G')^\vp\ra\pi_1(G)^\vp$ is short exact by Shapiro's lemma, and under our identification $G'^{\ab}(\bQ_p)/G'^{\ab}(\bZ_p)\ra^\sim\pi_1(G')^\vp$, the image of $Z'(\bQ_p)$ is precisely $\pi_1(Z')^\vp$. Hence the result follows.
\end{proof}

\subsection{}
We now switch our notation to a global context. Let $G$ be a reductive group scheme over $\bZ_{(p)}$ with geometrically connected fibers. By abuse of notation, we also write $G$ for its generic fiber. Let $(G,X)$ be a Shimura datum, write $\mu$ for its inverse Hodge cocharacter, and write $E$ for the field of definition of $\mu$. For any compact open subgroup $K$ of $G(\bA_f)$, write $\Sh_K(G,X)$ for the associated Shimura variety over $E$.

Fix an isomorphism $\bC\ra^\sim\bC_p$, write $v$ for the resulting place of $E$ over $p$, and note that the closure $E_v$ of $E$ in $\bC_p$ equals the field of definition of $\mu_{\bC_p}$. Write $b$ for the basic element of $B(G_{\bQ_p},\mu_{\bC_p})$. Recall that $G$ has a canonical inner form $G'$ over $\bQ$ such that
\begin{enumerate}[a)]
\item $G'_{\bQ_p}$ is isomorphic to $(G_{\bQ_p})_b$ over $\bQ_p$,
\item $G'_{\bA_f^p}$ is isomorphic to $G_{\bA_f^p}$ over $\bA_f^p$,
\item $G'_\bR$ is anisotropic modulo center over $\bR$
\end{enumerate}
\cite[Proposition 3.1]{Han21}. This enables us to form the small v-sheaf
\begin{align*}
\breve\fU(G,X)\coloneqq\varprojlim_{K^p}\left[G'(\bQ)\bs(\breve\fM_{G_{\bZ_p},b,\mu_{\bC_p}}\times\ul{G(\bA_f^p)/K^p})\right],
\end{align*}
where $K^p$ runs through compact open subgroups of $G(\bA_f^p)$. This is meant to be the source of the uniformization morphism. Note that $\breve\fU(G,X)$ inherits a Weil descent datum $\Phi$ over $\breve\cO_{E_v}$ as well as an action of $\ul{G(\bA_f^p)}$.

\subsection{}\label{ss:connectedcomponentsmap}
The following is meant to correspond to the connected components morphism of the relevant Shimura variety. Write $\sA_p(G)$ for the topological group as in \cite[(3.3.2)]{Kis10}, and write $\sA_p(G)^\circ$ for its closed subgroup as in \cite[(3.3.2)]{Kis10}.\footnote{These groups are denoted $\sA(G_{\bZ_{(p)}})$ and $\sA(G_{\bZ_{(p)}})^\circ$ in \cite{Kis10}, \cite{Kis17}, and \cite{SZ17}.} Recall that $\sA_p(G)^\circ$ only depends on $G^{\der}$, and $\sA_p(G)^\circ$ contains the image of $G^{\simplyc}(\bA_f^p)$ \cite[2.0.9]{Del79}.

The proof of \cite[(3.7.2)]{Kis17} yields a natural action of $\ul{\sA_p(G)}$ on $\breve\fU(G,X)$.
\begin{prop*}
  We have a natural $\ul{\sA_p(G)}$-equivariant morphism over $\Spd\breve\cO_{E_v}$
  \begin{align*}
    \fc_G:\breve\fU(G,X)\ra\ul{\sA_p(G)^\circ\bs\sA_p(G)}.
  \end{align*}
Moreover, $\fc_G\circ\Phi$ equals $\fc_G$ postcomposed with translation by the image of $\mu+\vp(\mu)+\dotsb+\vp^{r-1}(\mu)$ in $\sA_p(G)$.
\end{prop*}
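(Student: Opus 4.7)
The plan is to build $\fc_G$ from the connected component map of $\breve\fM_{G_{\bZ_p},b,\mu_{\bC_p}}$ identified in Proposition~\ref{ss:integrallocalshimuravarietiesconnectedcomponents}(a), combined with the obvious projection on the $\ul{G(\bA_f^p)}$-factor, and then to descend through the $G'(\bQ)$-quotient.

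At each finite level $K^p$, I would first define a morphism of v-sheaves
\[
\breve\fM_{G_{\bZ_p},b,\mu_{\bC_p}}\times\ul{G(\bA_f^p)/K^p}\longrightarrow\ul{G^{\simplyc}(\bA_f)\bs G(\bA_f)/G(\bZ_p)K^p}
\]
over $\Spd\breve\cO_{E_v}$ as the product of (i) the $G_b(\bQ_p)$-equivariant morphism
\[
\breve\fM_{G_{\bZ_p},b,\mu_{\bC_p}}\longrightarrow\ul{G^{\simplyc}(\bQ_p)\bs G(\bQ_p)/G(\bZ_p)}
\]
obtained by combining Proposition~\ref{ss:integrallocalshimuravarietiesconnectedcomponents}(a) with the isomorphism of Lemma~\ref{ss:pi1invariants}, and (ii) the evident $G(\bA_f^p)$-equivariant projection $\ul{G(\bA_f^p)/K^p}\to\ul{G^{\simplyc}(\bA_f^p)\bs G(\bA_f^p)/K^p}$.

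Next, I would verify that this descends through the quotient by $G'(\bQ)$ into the quotient of $\ul{\sA_p(G)^\circ\bs\sA_p(G)}$ by $\ul{K^p}$. Since $G'$ is an inner form of $G$, the $\bQ$-groups $G'^{\simplyc}$ and $G^{\simplyc}$, as well as $Z_{G'}$ and $Z_G$, are canonically identified. Combined with the comparison of Kottwitz maps at $p$ (whose target $\pi_1(G)^\vp=\pi_1(G_b)^\vp$ depends only on the quasi-split inner form), this reduces the descent to the assertion that the diagonal image of $G(\bQ)$ in $G^{\simplyc}(\bA_f)\bs G(\bA_f)/G(\bZ_p)K^p$ lies in the image of $\sA_p(G)^\circ$, which is built into the definition in \cite[(3.3.2)]{Kis10}. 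Passing to the inverse limit over $K^p$ then yields $\fc_G$ with target $\ul{\sA_p(G)^\circ\bs\sA_p(G)}$.

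Finally, for the $\ul{\sA_p(G)}$-equivariance I would unravel the action constructed in the proof of \cite[(3.7.2)]{Kis17}, which at the level of $\pi_0$ is right translation on double cosets, and observe that $\fc_G$ is manifestly compatible with this. For the Frobenius identity, postcompose the translation on $\pi_0(\breve\fM_{G_{\bZ_p},b,\mu_{\bC_p}})\cong\pi_1(G)^\vp$ supplied by Proposition~\ref{ss:integrallocalshimuravarietiesconnectedcomponents}(b) with the map $\pi_1(G)^\vp\to\sA_p(G)^\circ\bs\sA_p(G)$ from Lemma~\ref{ss:pi1invariants}, and note that $\Phi$ acts trivially on $\ul{G(\bA_f^p)/K^p}$. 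The main obstacle will be the descent step through $G'(\bQ)$: one must reconcile the abstract inclusion $G'(\bQ)\hookrightarrow G(\bA_f^p)\times G_b(\bQ_p)$ with the diagonal image of $G(\bQ)$ in $G(\bA_f)$ and verify that the resulting coset lies in $\sA_p(G)^\circ$, which rests on Lemma~\ref{ss:pi1invariants}, the canonical identification of centers and simply connected covers for inner forms, and strong approximation for $G^{\simplyc}$.
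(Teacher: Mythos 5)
Your overall scaffolding matches the paper's: build a finite-level map from $\pi_0$ of the local piece (via Proposition~\ref{ss:integrallocalshimuravarietiesconnectedcomponents}(a) and Lemma~\ref{ss:pi1invariants}) times the evident projection on the prime-to-$p$ adelic factor, descend through $G'(\bQ)$, take the inverse limit, and read off the Frobenius compatibility from Proposition~\ref{ss:integrallocalshimuravarietiesconnectedcomponents}(b). The last part of this is fine and essentially identical to the paper.

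The genuine gap is the descent through $G'(\bQ)$, which you correctly flag as the ``main obstacle'' but then do not actually close. Two specific problems. First, the assertion that ``the diagonal image of $G(\bQ)$ in $G^{\simplyc}(\bA_f)\bs G(\bA_f)/G(\bZ_p)K^p$ lies in the image of $\sA_p(G)^\circ$'' is false as stated: only the image of $G(\bQ)_+$ (the preimage of $G^{\ad}(\bR)^\circ$), or rather its closure $G(\bQ)_+^-$, is killed, and indeed \cite[(3.3.3)]{Kis10} identifies $\sA_p(G)^\circ\bs\sA_p(G)$ with $G(\bQ)_+^-\bs G(\bA_f)/G(\bZ_p)$. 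The paper defines $\fc_{G,K^p}$ directly with this target rather than your intermediate quotient $G^{\simplyc}(\bA_f)\bs G(\bA_f)/G(\bZ_p)K^p$, exploiting that $G(\bQ)_+^-$ contains the image of $G^{\simplyc}(\bQ_p)$. Second and more seriously, the group that acts is $G'(\bQ)$ (embedded in $G_b(\bQ_p)\times G(\bA_f^p)$), not $G(\bQ)$, and the identification of the induced $G'(\bQ)$-action on $\pi_0$ with something arising from $G(\bQ)_+^-$ is precisely the content of Kisin's \cite[(3.7.4)]{Kis17} and \cite[(3.6.10)]{Kis17}, which the paper invokes after reducing to the reduction of $\breve\fM_{G_{\bZ_p},b,\mu_{\bC_p}}$. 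Your appeal to ``comparison of Kottwitz maps at $p$, canonical identification of centers and simply connected covers for inner forms, and strong approximation for $G^{\simplyc}$'' gestures in the right direction but does not substitute for these results: the compatibility of the Kottwitz-type invariant of $G'(\bQ)$ across all places with the map to $\sA_p(G)^\circ\bs\sA_p(G)$ is a nontrivial global statement that Kisin establishes via the geometry of the basic Newton stratum, and it is not an abstract consequence of $G'$ being an inner form. As written, your proposal leaves the key step unproved.
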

\begin{proof}
  Recall that the natural map $\sA_p(G)^\circ\bs\sA_p(G)\ra G(\bQ)^-_+\bs G(\bA_f)/G(\bZ_p)$ is an isomorphism \cite[(3.3.3)]{Kis10}, where $G(\bQ)^-_+$ denotes the closure in $G(\bA_f)$ of the preimage of $G^{\ad}(\bR)^\circ$ in $G(\bQ)$.  Proposition \ref{ss:integrallocalshimuravarietiesconnectedcomponents} and Lemma \ref{ss:pi1invariants} indicate that the connected components morphism of $\breve\fM_{G_{\bZ_p},b,\mu_{\bC_p}}$ is of the form
  \begin{align*}
\pi_0:\breve\fM_{G_{\bZ_p},b,\mu_{\bC_p}}\ra\ul{G^{\simplyc}(\bQ_p)\bs G(\bQ_p)/G(\bZ_p)}.
  \end{align*}
Using this, we can consider the morphism
\begin{align*}
\fc_{G,K^p}:\breve\fM_{G_{\bZ_p},b,\mu_{\bC_p}}\times\ul{G(\bA_f^p)/K^p}\ra\ul{G(\bQ)_+^-\bs G(\bA_f)/G(\bZ_p)K^p}
\end{align*}
that sends $(x,g^pK^p)$ to the image of $(\pi_0(x),g^pK^p)$, where we use the fact that $G(\bQ)_+^-$ contains the image of $G^{\simplyc}(\bQ_p)$.

By construction, the restriction of $\fc_G$ to the images of $\breve\fM_{G_{\bZ_p},b,\mu_{\bC_p}}$ depend only on its connected components. The connected components of $\breve\fM_{G_{\bZ_p},b,\mu_{\bC_p}}$ equal those of its reduction, so \cite[(3.7.4)]{Kis17} and \cite[(3.6.10)]{Kis17} imply that $\fc_{G,K^p}$ factors through $G'(\bQ)\bs(\breve\fM_{G_{\bZ_p},b,\mu_{\bC_p}}\times\ul{G'(\bA_f^p)/K^p})$.

Finally, form the inverse limit
\begin{align*}
\varprojlim_{K^p}\fc_{G,K^p}:\breve\fU(G,X)\ra\ul{G(\bQ)^-_+\bs G(\bA_f)/G(\bZ_p)},
\end{align*}
and let $\fc_G$ by its postcomposition with the inverse of
\begin{align*}
\ul{\sA_p(G)^\circ\bs\sA_p(G)}\ra^\sim \ul{G(\bQ)^-_+\bs G(\bA_f)/G(\bZ_p)}.
\end{align*}
Proposition \ref{ss:integrallocalshimuravarietiesconnectedcomponents}.b) shows $\fc_G$ satisfies the desired compatibility with $\Phi$.
\end{proof}

\subsection{}\label{ss:actualsurjectionindependence}
The connected components of the uniformization morphism should depend only on $G^{\der}$ and $X^+$. To make this precise, we need the following lemma. Let $G_1$ be another reductive group scheme over $\bZ_{(p)}$ with geometrically connected fibers, let $(G_1,X_1)$ be another Shimura datum, and let $G_1\ra G$ be a surjective morphism over $\bZ_{(p)}$ that induces an isomorphism $G^{\der}_1\ra^\sim G^{\der}$ and a morphism $(G_1,X_1)\ra(G,X)$ of Shimura data.

By Proposition \ref{ss:connectedcomponentsmap}, each fiber $\breve\fU(G,X)^+$ of $\fc_G$ is isomorphic.
\begin{lem*}
The natural morphism $\breve\fU(G_1,X_1)^+\ra\breve\fU(G,X)^+$ is an isomorphism.
\end{lem*}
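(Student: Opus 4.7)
The plan is to describe both sides explicitly as discrete quotients of the identity component of the relevant integral local Shimura variety, and then to match the quotient descriptions under $G_1\to G$. Write $Z_1\coloneqq\ker(G_1\to G)$; since $G_1^{\der}\ra^\sim G^{\der}$, this $Z_1$ is a central subgroup scheme of $G_1$, and (inner twisting preserving the centre) it is also the kernel of $G_1'\to G'$. Set $M\coloneqq\breve\fM_{G_{\bZ_p},b,\mu_{\bC_p}}$ and $M_1\coloneqq\breve\fM_{(G_1)_{\bZ_p},b_1,\mu_{1,\bC_p}}$. Proposition \ref{ss:integrallocalshimuravarietiesconnectedcomponents} decomposes $M$ (resp.\ $M_1$) as a $G_b(\bQ_p)$-equivariant (resp.\ $G_{1,b_1}(\bQ_p)$-equivariant) disjoint union of copies of its identity component $M^+$ (resp.\ $M_1^+$), indexed by $\pi_1(G)^\vp$ (resp.\ $\pi_1(G_1)^\vp$), and I fix compatible choices of these identity components under the natural morphism $M_1\to M$.

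Next, unravelling the construction of $\fc_G$ in Proposition \ref{ss:connectedcomponentsmap} via the isomorphism $\sA_p(G)^\circ\bs\sA_p(G)\cong G(\bQ)^-_+\bs G(\bA_f)/G(\bZ_p)$, I compute the fibre of $\fc_G$ over the identity class at finite level $K^p$ to be $\Gamma_{G,K^p}\bs M^+$, where
\begin{align*}
\Gamma_{G,K^p}\coloneqq G'(\bQ)\cap(G_b(\bQ_p)^+\times K^p)
\end{align*}
denotes the intersection inside $G_b(\bQ_p)\times G(\bA_f^p)=G'(\bQ_p)\times G'(\bA_f^p)$. Passing to the inverse limit in $K^p$ yields
\begin{align*}
\breve\fU(G,X)^+\cong\Gamma_G\bs M^+,\qquad\Gamma_G\coloneqq\textstyle\varprojlim_{K^p}\Gamma_{G,K^p},
\end{align*}
and an analogous description gives $\breve\fU(G_1,X_1)^+\cong\Gamma_{G_1}\bs M_1^+$. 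The morphism of the lemma is then the map of quotients induced by $\Gamma_{G_1}\to\Gamma_G$ and $M_1^+\to M^+$, so it suffices to show that $M_1^+\to M^+$ is an isomorphism and that $\Gamma_{G_1}\to\Gamma_G$ is surjective with kernel acting trivially on $M_1^+$.

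The group-level check is relatively direct: the kernel of $\Gamma_{G_1}\to\Gamma_G$ equals $Z_1(\bQ)\cap\Gamma_{G_1}\subseteq Z_1(\bQ_p)$, which by centrality acts on $M_1$ only through the component map $G_{1,b_1}(\bQ_p)\to\pi_1(G_1)^\vp$, hence trivially on $M_1^+$; surjectivity follows from the fact that $\sA_p(\cdot)^\circ$ depends only on the derived group combined with the Hilbert 90 / $z$-extension argument of Lemma \ref{ss:pi1invariants}. The main obstacle is establishing that $M_1^+\to M^+$ is an isomorphism. This expresses the principle that the identity component of the integral local Shimura variety depends only on (the simply connected cover of) $G^{\der}$: while the same centrality argument easily shows that $Z_1(\bQ_p)$ acts trivially on $M_1^+$, promoting this to a genuine geometric isomorphism appears to require a more careful use of the moduli description of $\breve\fM_{G,b,\mu}^+$, either via the affine Deligne--Lusztig variety description of Proposition \ref{ss:integrallocalshimuravarietiesconnectedcomponents} together with the fact that the identity component of the affine Grassmannian is controlled by $G^{\der,\simplyc}$, or via the Pappas--Rapoport characterisation of the integral model.
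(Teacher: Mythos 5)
There is a genuine gap, and you have in fact flagged it yourself: the entire weight of the lemma rests on showing $M_1^+\to M^+$ is an isomorphism, and you stop at the point of saying this ``appears to require'' one of two possible strategies without carrying either out. The paper's proof settles this in one stroke by observing that $(G_{1,\bZ_p},b_1,\mu_{1,\bC_p})\to(G_{\bZ_p},b,\mu_{\bC_p})$ is an \emph{ad-isomorphism} of integral local shtuka data in the sense of Pappas--Rapoport, so \cite[Theorem 5.1.2]{PR22} directly gives that the map on each connected component of $\breve\fM$ is an isomorphism. This is exactly your second suggested route, but as written your proposal does not contain the argument.

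There is a second problem with your reduction. You write $\breve\fU(G,X)^+\cong\Gamma_G\bs M^+$ with $M^+$ connected, which would force $\breve\fU(G,X)^+$ to be connected; but $\breve\fU(G,X)^+$ is a fiber of $\fc_G$ and is in general \emph{disconnected} --- the paper's proof relies precisely on the observation that the images of the $\breve\fM^+$ are \emph{the} connected components of $\breve\fU(G,X)^+$, of which there may be many. Because $\fc_G$ is built from the map to $G(\bQ)^-_+\bs G(\bA_f)/G(\bZ_p)$ while the quotient upstairs is by the inner form $G'(\bQ)$, the fiber over the identity class is not obviously a single $\Gamma_{G,K^p}\bs M^+$; a class set intervenes. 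The paper sidesteps this entirely by working one connected component at a time (where PR22 gives the geometric isomorphism) and then separately establishing a bijection on $\pi_0$ using the reduction and \cite[(3.8.2)]{Kis17}. Your group-theoretic surjectivity claim for $\Gamma_{G_1}\to\Gamma_G$ would, if made precise, have to carry the weight that Kisin's global $\pi_0$ computation carries in the paper, and as stated it relies on a vague appeal to Hilbert 90 and $z$-extensions that does not obviously apply since $Z_1=\ker(G_1\to G)$ is not assumed to be a product of induced tori in this lemma (that extra hypothesis only appears later, in Lemma \ref{ss:inducedindependence}).
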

\begin{proof}
  Note that the connected components $\breve\fM_{G_{\bZ_p},b,\mu_{\bC_p}}^+$ map isomorphically onto their images in $\breve\fU(G,X)$. As $\varprojlim_{K^p}\ul{G'(\bQ)\bs G(\bA_f^p)/K^p}$ is totally disconnected, these images are precisely the connected components of $\breve\fU(G,X)$ and hence $\breve\fU(G,X)^+$.

  The morphism $(G_{1\bZ_p},b_1,\mu_{1\bC_p})\ra(G_{\bZ_p},b,\mu_{\bC_p})$ is an ad-isomorphism of integral local shtuka data as in \cite[p.~31]{PR22}. Therefore \cite[Theorem 5.1.2]{PR22} implies that our natural morphism induces an isomorphism on connected components. Because the connected components of $\breve\fU(G,X)^+$ equal those of its reduction, we see from \cite[(3.8.2)]{Kis17} that our morphism also induces a bijection on $\pi_0$, which enables us to conclude.
\end{proof}

\subsection{}\label{ss:generalindependence}
For any multiple $s$ of $r$, write $\sE_p^s(G^{\der})$ for the stabilizer of $\breve\fU(G,X)^+$ in $\sA_p(G)\times\Phi^{\frac{s}r\bZ}$. When $s=r$, we omit it from our notation.
\begin{lem*}
The closed subgroup $\sE_p^s(G^{\der})$ is an extension of $\Phi^{\frac{s}r\bZ}$ by $\sA_p(G)^\circ$. Moreover, $\sE_p^s(G^{\der})$ and $\breve\fU(G,X)^+$ only depend on $G^{\der}$, $X^+$, and $s$.
\end{lem*}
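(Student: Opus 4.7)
The plan is to handle the two assertions in turn. For the extension structure, I would use the $\ul{\sA_p(G)}$-equivariance of $\fc_G$ from Proposition \ref{ss:connectedcomponentsmap}: since $\sA_p(G)$ acts transitively on $\sA_p(G)^\circ\bs\sA_p(G)$ with stabilizer $\sA_p(G)^\circ$, the stabilizer of $\breve\fU(G,X)^+$ in $\sA_p(G)\times\{1\}$ is exactly $\sA_p(G)^\circ$, which gives the claimed kernel. For surjectivity onto $\Phi^{\frac{s}{r}\bZ}$, I would invoke Proposition \ref{ss:connectedcomponentsmap}.b): $\Phi^{\frac{s}{r}n}$ translates $\fc_G(\breve\fU(G,X)^+)$ by $\frac{s}{r}n$ times the image $\tau$ of $\mu+\vp(\mu)+\dotsb+\vp^{r-1}(\mu)$, and this translation can be undone by any lift of $-\frac{s}{r}n\tau$ to $\sA_p(G)$, which exists because $\sA_p(G)\to\sA_p(G)^\circ\bs\sA_p(G)$ is surjective.

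For the independence, I would adapt the standard construction for comparing Shimura data. Given another pair $(G_1,X_1)$ of our type with an identification $G_1^{\der}\cong G^{\der}$ sending $X_1^+$ to $X^+$, form the fiber product $G_2\coloneqq G\times_{G^{\ad}} G_1$ over $\bZ_{(p)}$, where $G^{\ad}$ is the common adjoint group. I expect $G_2$ to be a reductive group scheme with geometrically connected fibers whose derived group maps isomorphically onto $G^{\der}$ and $G_1^{\der}$ via the projections. Equipping $G_2$ with the Shimura datum $X_2$ obtained as the $G_2(\bR)$-conjugacy class of $(x,x_1):\bS\to G_{2,\bR}$ for any compatible $x\in X^+$ and $x_1\in X_1^+$, the surjections $(G_2,X_2)\to(G,X)$ and $(G_2,X_2)\to(G_1,X_1)$ satisfy the hypotheses of Lemma \ref{ss:actualsurjectionindependence}, yielding canonical isomorphisms $\breve\fU(G,X)^+\cong\breve\fU(G_2,X_2)^+\cong\breve\fU(G_1,X_1)^+$. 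For $\sE_p^s$, I would exploit that $\sA_p(-)^\circ$ depends only on the derived group (as recorded in \ref{ss:connectedcomponentsmap}), so both projections induce isomorphisms on the kernels of the extensions from the first part, while both induce the identity on the quotient $\Phi^{\frac{s}{r}\bZ}$ once $s$ is a common multiple of the relevant residue degrees (since this factor is absolute $p^s$-Frobenius). The five lemma then yields $\sE_p^s(G^{\der})\cong\sE_p^s(G_1^{\der})$ canonically.

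The main obstacle, in my view, is the preparatory algebro-geometric verification that $G_2=G\times_{G^{\ad}} G_1$ is indeed reductive over $\bZ_{(p)}$ with geometrically connected fibers whose derived group is canonically $G^{\der}$ via each projection, together with the check that the proposed $(G_2,X_2)$ is a Shimura datum in our sense (which is delicate because $Z(G_2)$ can be strictly larger than either $Z(G)$ or $Z(G_1)$, and one must ensure the axioms descend appropriately). Once this infrastructure is in place, the remainder of the argument reduces to two applications of Lemma \ref{ss:actualsurjectionindependence} together with a routine diagram chase.
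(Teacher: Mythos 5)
Your proposal is correct and follows essentially the route the paper takes: the paper's proof is a citation to Kisin's (3.8.5) and (3.8.6) together with Proposition \ref{ss:connectedcomponentsmap} and Lemma \ref{ss:actualsurjectionindependence}, and Kisin's argument is the fiber-product-over-the-adjoint comparison you describe. Your extension argument is sound; note that when you say the stabilizer of $\breve\fU(G,X)^+$ in $\sA_p(G)\times\{1\}$ is \emph{exactly} $\sA_p(G)^\circ$, you are implicitly using that $\sA_p(G)^\circ$ is normal (so the stabilizer of any coset is $\sA_p(G)^\circ$ and not merely a conjugate); this holds because the quotient $\sA_p(G)^\circ\bs\sA_p(G)\cong G(\bQ)^-_+\bs G(\bA_f)/G(\bZ_p)$ is an abelian group.

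On the independence part, the obstacle you flag is genuine but is resolved in the standard way: one does not take $G_2=G\times_{G^{\ad}}G_1$ itself but its \emph{relative identity component} over $\bZ_{(p)}$ (this is the device the paper uses explicitly in the proof of Theorem \ref{ss:mainequality}, and is also what Kisin does). The projections from the identity component to $G$ and $G_1$ remain surjective because a closed subgroup of finite index in a group scheme with connected fibers is the whole group; the derived group of the identity component is the diagonally embedded $G^{\der}$, so both projections induce isomorphisms on derived groups; and the Deligne axioms for $(G_2^\circ,X_2)$ hold because they depend only on the adjoint datum, which is unchanged. With those verifications, Lemma \ref{ss:actualsurjectionindependence} applies to both projections exactly as you say, and your five-lemma argument for $\sE^s_p$ goes through once $s$ is a common multiple of $r$, $r_1$, and hence of $r_2=\mathrm{lcm}(r,r_1)$.
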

\begin{proof}
Using Proposition \ref{ss:connectedcomponentsmap} and Lemma \ref{ss:actualsurjectionindependence}, this follows from arguing exactly as in the proof of \cite[(3.8.5)]{Kis17} and \cite[(3.8.6)]{Kis17}.
\end{proof}

\subsection{}\label{ss:inducedindependence}
We start by proving the main theorem for connected components of the uniformization morphism in the following special case. Let $G_2$ be another reductive group scheme over $\bZ_{(p)}$ with geometrically connected fibers, let $(G_2,X_2)$ be another Shimura datum, and let $G_2\ra G$ be a surjective morphism over $\bZ_{(p)}$ that induces a morphism $(G_2,X_2)\ra(G,X)$ of Shimura data and whose kernel is a product of induced tori. Assume that $s$ is a multiple of $r_2$.
\begin{lem*}
  We have a natural isomorphism
  \begin{align*}
  \sA_p(G)^\circ*_{\sA_p(G_2)^\circ}\sE_p^s(G^{\der}_2)\ra^\sim\sE_p^s(G^{\der}).
  \end{align*}
  Under this identification, we have a natural $\ul{\sE_p^s(G^{\der})}$-equivariant isomorphism $\ul{\sA_p(G)^\circ}\times^{\ul{\sA_p(G_2)^\circ}}\breve\fU(G_2,X)^+\ra^\sim\breve\fU(G,X)^+$.
\end{lem*}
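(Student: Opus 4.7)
The plan is to construct both maps directly from the surjection $G_2\to G$ and verify bijectivity via the cohomological triviality of the kernel $Z\coloneqq\ker(G_2\to G)$. Throughout, I will invoke Lemma \ref{ss:generalindependence} to view $\sE_p^s(G^{\der}_j)$ as an extension of $\Phi^{(s/r_j)\bZ}$ by $\sA_p(G_j)^\circ$, and will exploit the fact that $s$ is a multiple of $r_2$, hence of $r$ (since $E_v\subseteq E_{2,v}$ gives $r\mid r_2$).

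First, I would produce the natural maps. The surjection $G_2\to G$ sends $(b_2,\mu_2)$ to $(b,\mu)$, and by functoriality of the integral local Shimura variety (see \ref{ss:integrallocalshimuravarieties}) we obtain a compatible morphism $\breve\fM_{G_{2,\bZ_p},b_2,\mu_{2,\bC_p}}\to\breve\fM_{G_{\bZ_p},b,\mu_{\bC_p}}$. Taking products with $\ul{G_2(\bA_f^p)/K^p}$, descending through the $G_2'(\bQ)$-quotient (noting that $Z$ maps compatibly to the canonical inner forms), and passing to the inverse limit yields a natural morphism $\breve\fU(G_2,X_2)\to\breve\fU(G,X)$. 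Restricting to a chosen connected component $\breve\fU(G_2,X_2)^+$ and its image in $\breve\fU(G,X)^+$ and taking stabilizers gives a homomorphism $\sE_p^s(G^{\der}_2)\to\sE_p^s(G^{\der})$; combined with the canonical inclusion $\sA_p(G)^\circ\hookrightarrow\sE_p^s(G^{\der})$ from Lemma \ref{ss:generalindependence}, this furnishes the desired pushout map on groups, and its $\sA_p(G)^\circ$-equivariance provides the map of v-sheaves.

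Next, I would verify the group-level isomorphism. Because $Z$ is a product of induced tori, Shapiro's lemma and Hilbert 90 yield $H^1(\bQ,Z)=0$ and $H^1(\bQ_\ell,Z)=0$ for all $\ell$, so $G_2(\bA_f^p)\to G(\bA_f^p)$ is surjective with kernel $Z(\bA_f^p)$. Combined with Lemma \ref{ss:pi1invariants} describing $\pi_1(G_j)^\vp$ as a double coset space, one deduces that $\sA_p(G_2)\to\sA_p(G)$ is surjective with kernel identified with the image of $Z(\bQ)\bs Z(\bA_f^p)$, and that the induced $\sA_p(G_2)^\circ\to\sA_p(G)^\circ$ is injective. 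Viewing both $\sE_p^s(G^{\der}_j)$ as extensions of $\Phi^{(s/r_j)\bZ}$ by $\sA_p(G_j)^\circ$ and checking that the $\Phi$-parts line up via $r\mid r_2\mid s$, the pushout description then follows by a direct snake-lemma comparison of extensions.

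Finally, for the v-sheaf isomorphism, the $\sA_p(G_2)^\circ$-equivariant map $\breve\fU(G_2,X_2)^+\to\breve\fU(G,X)^+$ extends to the induced morphism $\ul{\sA_p(G)^\circ}\times^{\ul{\sA_p(G_2)^\circ}}\breve\fU(G_2,X_2)^+\to\breve\fU(G,X)^+$. Bijectivity then reduces, via Proposition \ref{ss:integrallocalshimuravarietiesconnectedcomponents}, to a check on $\pi_0$ and on each connected component: the $\pi_0$ check follows from the just-established group isomorphism combined with the transitivity of the $\sA_p$-action on connected components afforded by $\fc_G$ in Proposition \ref{ss:connectedcomponentsmap}, while the check on a single connected component is a variant of Lemma \ref{ss:actualsurjectionindependence}, applied to the ad-isomorphism of integral local shtuka data $(G_{2,\bZ_p},b_2,\mu_{2,\bC_p})\to(G_{\bZ_p},b,\mu_{\bC_p})$ induced by $Z$ being central. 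The main obstacle will be ensuring that the two quotient structures---the right $\sA_p(G)^\circ$-action and the diagonal $\sA_p(G_2)^\circ$-action---glue compatibly across all connected components, which I expect to follow from the $\sA_p$-equivariance and functoriality of the connected components map $\fc_G$ of Proposition \ref{ss:connectedcomponentsmap} as one varies between $G_2$ and $G$.
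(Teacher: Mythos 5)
Your overall structure (construct the maps functorially from the surjection $G_2\to G$, verify the group isomorphism by comparing extensions, verify the v-sheaf isomorphism by splitting into a $\pi_0$ check and a per-component check using PR22 5.1.2) tracks the paper's proof, and your identification of the per-component input — the ad-isomorphism $(G_{2,\bZ_p},b_2,\mu_{2,\bC_p})\to(G_{\bZ_p},b,\mu_{\bC_p})$ and \cite[Theorem 5.1.2]{PR22} — is correct. But there is a genuine gap in the $\pi_0$ step, and a dubious auxiliary claim in the group step.

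The $\pi_0$ gap is the serious one. You claim that bijectivity on $\pi_0$ "follows from the just-established group isomorphism combined with the transitivity of the $\sA_p$-action on connected components afforded by $\fc_G$." But $\fc_G$ is a morphism to $\ul{\sA_p(G)^\circ\bs\sA_p(G)}$, and as noted in the proof of Lemma~\ref{ss:actualsurjectionindependence}, each fiber $\breve\fU(G,X)^+$ of $\fc_G$ has many connected components (the isomorphic images of the $\breve\fM^+$'s). So $\fc_G$ does not resolve $\pi_0(\breve\fU(G,X)^+)$, and transitivity of $\sA_p(G)$ on the cosets says nothing about a bijection
\begin{align*}
\sA_p(G)^\circ\times^{\sA_p(G_2)^\circ}\pi_0(\breve\fU(G_2,X_2)^+)\to\pi_0(\breve\fU(G,X)^+).
\end{align*}
The paper closes exactly this gap by observing that the connected components of the source are images of the $\breve\fM^+_{G_{2,\bZ_p},b_2,\mu_{2,\bC_p}}$ and then invoking \cite[(3.8.10)]{Kis17} (note: this is the $z$-extension analogue of the result \cite[(3.8.2)]{Kis17} used in Lemma~\ref{ss:actualsurjectionindependence}; it is a substantive input from Kisin's connected components theory for integral models of Shimura varieties, identifying the relevant $\pi_0$'s with their reductions and comparing them across $G_2$ and $G$). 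Without this, or an equivalent global argument, the $\pi_0$ step does not close.

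Two smaller points. First, your claim that $\sA_p(G_2)^\circ\to\sA_p(G)^\circ$ is injective is not justified and is in fact doubtful here: the kernel $Z$ of $G_2\to G$ being a torus only forces $G_2^{\der}\to G^{\der}$ to be an isogeny, not an isomorphism, and $\sA_p(-)^\circ$ is built from the derived group, so the induced map on $\sA_p(-)^\circ$ need not be injective. Fortunately this is also not needed — the pushout of the extension $1\to\sA_p(G_2)^\circ\to\sE_p^s(G_2^{\der})\to\Phi^{(s/r_2)\bZ}\to1$ along $\sA_p(G_2)^\circ\to\sA_p(G)^\circ$ is an extension of $\Phi^{(s/r_2)\bZ}$ by $\sA_p(G)^\circ$ regardless, and one compares it with the extension $1\to\sA_p(G)^\circ\to\sE_p^s(G^{\der})\to\Phi^{(s/r)\bZ}\to1$ from Lemma~\ref{ss:generalindependence} by a map that is the identity on both ends. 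This is really what Lemma~\ref{ss:generalindependence} gives you, and it is more structural than the Shapiro/Hilbert~90 computation you outline, though your computation is not wrong. Second, the explicit cohomological description of $\ker(\sA_p(G_2)\to\sA_p(G))$ is not actually used in closing the argument, so while it is not incorrect, it is a detour from the mechanism the lemma actually needs.
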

\begin{proof}
  Proposition \ref{ss:connectedcomponentsmap} implies that we have a natural continuous homomorphism $\sA_p(G)^\circ*_{\sA_p(G_2)^\circ}\sE_p^s(G^{\der}_2)\ra\sE_p^s(G^{\der})$, and Lemma \ref{ss:generalindependence} indicates that it is an isomorphism. Proposition \ref{ss:connectedcomponentsmap} also implies that we have a natural $\ul{\sE_p^s(G^{\der})}$-equivariant morphism $\ul{\sA_p(G)^\circ}\times^{\ul{\sA_p(G_2)^\circ}}\breve\fU(G_2,X)^+\ra\breve\fU(G,X)^+$. Observe that the connected components $\breve\fM^+_{G_{2\bZ_p},b_2,\mu_{2\bC_p}}$ map isomorphically onto their images in $\ul{\sA_p(G)^\circ}\times^{\ul{\sA_p(G_2)^\circ}}\breve\fU(G_2,X)^+$, and these images are precisely its connected components. Using \cite[Theorem 5.1.2]{PR22} and \cite[(3.8.10)]{Kis17}, the desired result follows from arguing as in the proof of Lemma \ref{ss:actualsurjectionindependence}.
\end{proof}

\subsection{}\label{ss:mainequality}
We now bootstrap from Lemma \ref{ss:inducedindependence} to the general case. Let $G_3$ be another reductive group scheme over $\bZ_{(p)}$ with geometrically connected fibers, let $(G_3,X_3)$ be another Shimura datum, and let $G^{\der}_3\ra G^{\der}$ be an isogeny over $\bZ_{(p)}$ that induces an isomorphism $(G_3^{\ad},X^{\ad}_3)\ra^\sim(G^{\ad},X^{\ad})$. Assume that $s$ is a multiple of $r$ and $r_3$.
\begin{thm*}
  We have an isomorphism
  \begin{align*}
    \sA_p(G)^\circ*_{\sA_p(G_3)^\circ}\sE^s_p(G^{\der}_3)\cong\sE_p^s(G^{\der}).
  \end{align*}
  Under this identification, we have a $\ul{\sE_p^s(G^{\der})}$-equivariant isomorphism
  \begin{align*}
\ul{\sA_p(G)^\circ}\times^{\ul{\sA_p(G_3)^\circ}}\breve\fU(G_3,X_3)^+\cong\fU(G,X)^+
  \end{align*}
  and hence an $\ul{\sA_p(G)}\times\Phi^{\frac{s}r\bZ}$-equivariant isomorphism
  \begin{align*}
\ul{\sA_p(G)}\times^{\ul{\sA_p(G_3)^\circ}}\breve\fU(G_3,X_3)^+\cong\breve\fU(G,X).
  \end{align*}
\end{thm*}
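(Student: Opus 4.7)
The plan is to bootstrap from Lemma \ref{ss:inducedindependence} by interposing $z$-extensions with a common simply connected derived group, and then to identify the resulting amalgamated products via Lemma \ref{ss:generalindependence}. Since $G^{\der}_3\ra G^{\der}$ is an isogeny, the two simply connected covers coincide canonically; write $G^{\simplyc}$ for this common group. I would choose $z$-extensions $\tilde G\ra G$ and $\tilde G_3\ra G_3$ of reductive group schemes over $\bZ_{(p)}$ whose kernels are products of induced (unramified) tori and whose derived groups are both canonically identified with $G^{\simplyc}$; these exist because $G$ and $G_3$ split over unramified extensions of $\bQ_p$. They lift to morphisms of Shimura data $(\tilde G,\tilde X)\ra(G,X)$ and $(\tilde G_3,\tilde X_3)\ra(G_3,X_3)$, and since $\tilde G^{\ad}=G^{\ad}=G^{\ad}_3=\tilde G^{\ad}_3$, the hermitian symmetric components satisfy $\tilde X^+=X^+=X_3^+=\tilde X_3^+$.

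Applying Lemma \ref{ss:inducedindependence} separately to these two $z$-extensions yields
\[
\sA_p(G)^\circ*_{\sA_p(\tilde G)^\circ}\sE_p^s(\tilde G^{\der})\ra^\sim\sE_p^s(G^{\der}),\qquad
\sA_p(G_3)^\circ*_{\sA_p(\tilde G_3)^\circ}\sE_p^s(\tilde G_3^{\der})\ra^\sim\sE_p^s(G^{\der}_3),
\]
together with the compatible $\ul{\sE_p^s(\cdot)}$-equivariant identifications on $\breve\fU(\cdot,\cdot)^+$. By Lemma \ref{ss:generalindependence}, since both $z$-extensions have derived group $G^{\simplyc}$ and the same $X^+$, the groups $\sE_p^s(\tilde G^{\der})$ and $\sE_p^s(\tilde G_3^{\der})$ coincide (call this $\sE$), and so do the v-sheaves $\breve\fU(\tilde G,\tilde X)^+$ and $\breve\fU(\tilde G_3,\tilde X_3)^+$. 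Moreover $\sA_p(\tilde G)^\circ=\sA_p(\tilde G_3)^\circ$, since each depends only on $G^{\simplyc}$; call this common subgroup $A$.

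To conclude, I factor the map $A\ra\sA_p(G)^\circ$ as $A\ra\sA_p(G_3)^\circ\ra\sA_p(G)^\circ$, where the second map arises from the isogeny $G^{\der}_3\ra G^{\der}$ via the fact that $\sA_p(-)^\circ$ depends only on the derived group. A standard associativity-of-pushouts computation gives
\[
\sE_p^s(G^{\der})\cong\sA_p(G)^\circ*_A\sE\cong\sA_p(G)^\circ*_{\sA_p(G_3)^\circ}\bigl(\sA_p(G_3)^\circ*_A\sE\bigr)\cong\sA_p(G)^\circ*_{\sA_p(G_3)^\circ}\sE_p^s(G_3^{\der}),
\]
and the identical manipulation on the $\breve\fU^+$-side yields the $\ul{\sE_p^s(G^{\der})}$-equivariant isomorphism $\ul{\sA_p(G)^\circ}\times^{\ul{\sA_p(G_3)^\circ}}\breve\fU(G_3,X_3)^+\cong\breve\fU(G,X)^+$. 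The full $\ul{\sA_p(G)}\times\Phi^{\frac{s}{r}\bZ}$-equivariant isomorphism on $\breve\fU(G,X)$ then follows by inducing over $\sA_p(G)/\sA_p(G)^\circ$, invoking Proposition \ref{ss:connectedcomponentsmap} to see that all connected components of $\breve\fU(G,X)$ are obtained from $\breve\fU(G,X)^+$ by translation by $\sA_p(G)$. The main obstacle is the first step: arranging $z$-extensions over $\bZ_{(p)}$ compatibly enough that the lifts of Shimura data, the $\bZ_{(p)}$-integral structures, and the identifications of derived groups with $G^{\simplyc}$ all simultaneously cohere so that the outputs of Lemma \ref{ss:inducedindependence} line up as claimed.
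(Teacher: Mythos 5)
Your proof is correct and reaches the same conclusion, but by a genuinely different route. The paper's proof reduces to a \emph{single} application of Lemma~\ref{ss:inducedindependence} through two preliminary replacements: first it replaces $G_3$ with the relative connected component of $G_3\times_{G^{\ad}}G$ over $\bZ_{(p)}$ (which produces a surjection onto $G$ while preserving $G_3^{\der}$, so Lemma~\ref{ss:generalindependence} applies), then it replaces that group with its pushout $G_3\times^Z Z'$ along an embedding of the kernel $Z$ into a product of induced tori $Z'$ (again preserving the derived group). After these two reductions, $G_3\ra G$ is itself a $z$-extension and Lemma~\ref{ss:inducedindependence} applies directly; the last statement then follows from Proposition~\ref{ss:connectedcomponentsmap}. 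You instead work symmetrically, applying Lemma~\ref{ss:inducedindependence} \emph{twice} via auxiliary $z$-extensions $\tilde G\ra G$ and $\tilde G_3\ra G_3$ sharing the simply connected derived group $G^{\simplyc}$, then matching the two outputs by Lemma~\ref{ss:generalindependence} (applied to $\tilde G$ vs.\ $\tilde G_3$) and using associativity of amalgamated products, exploiting that $G^{\simplyc}\ra G^{\der}$ factors canonically through $G_3^{\der}$. Both routes are valid and require the same standard verifications that the intermediate $\bZ_{(p)}$-groups carry Shimura data lifting the originals; the concern you flag at the end is real but no more delicate than what the paper's fiber-product replacement also quietly requires (one must check that taking the relative connected component of $G_3\times_{G^{\ad}}G$ preserves the derived group, and that Shimura data lift along $z$-extensions). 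The paper's version is more economical (one invocation of Lemma~\ref{ss:inducedindependence} instead of two), while yours makes the ``only depends on $G^{\der}$ and $X^+$'' philosophy of Lemma~\ref{ss:generalindependence} more transparent by passing through the canonical simply connected cover.
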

\begin{proof}
By Lemma \ref{ss:generalindependence}, we may replace $G_3$ with the relative connected component of $G_3\times_{G^{\ad}}G$ over $\bZ_{(p)}$ and therefore assume that $G^{\der}_3\ra G^{\der}$ is induced by a surjective morphism $G_3\ra G$ over $\bZ_{(p)}$. Next, write $Z$ for the kernel of $G_3\ra G$, and embed $Z$ into a product of induced tori $Z'$ over $\bZ_{(p)}$. Lemma \ref{ss:generalindependence} enables us to replace $G_3$ with $G_3\times^ZZ'$ and thus additionally assume that $Z$ is a product of induced tori. Then the first two statements follow from Lemma \ref{ss:inducedindependence}, and the final statement follows from Proposition \ref{ss:connectedcomponentsmap}.
\end{proof}

\subsection{}\label{ss:hyperspecialleveluniformization}
Let us establish some notation on the basic locus. Assume that $p\neq2$ and $(G,X)$ is of abelian type. Write $\Sh_{G(\bZ_p)}(G,X)$ for the inverse limit
\begin{align*}
\varprojlim_{K^p}\Sh_{G(\bZ_p)K^p}(G,X),
\end{align*}
and write $\sS_{G(\bZ_p)}(G,X)$ for its integral canonical model over $\cO_{E(v)}$ as in \cite[(3.4.14)]{Kis10}. Write $\sS_{G(\bZ_p)}(G,X)^b_{\ov\bF_q}$ for the basic locus as in \cite[p.~183]{SZ17}, which is a closed subscheme of $\sS_{G(\bZ_p)}(G,X)_{\ov\bF_q}$ preserved by Frobenius as well as the $\sA_p(G)$-action. Write $\fS\fh_{G(\bZ_p)}(G,X)^b$ for the completion of $\sS_{G(\bZ_p)}(G,X)_{\breve\cO_{E_v}}$ along $\sS_{G(\bZ_p)}(G,X)_{\ov\bF_q}^b$, and write $\Sh_{G(\bZ_p)}(G,X)^b_{\breve{E}_v}$ for the adic generic fiber of $\fS_{G(\bZ_p)}(G,X)^b$ over $\breve{E}_v$. Note that $\fS\fh_{G(\bZ_p)}(G,X)^b$ inherits a Weil descent datum over $\breve\cO_{E_v}$ as well as an action of $\sA_p(G)$.

Write $\Sh_{G(\bZ_p)K^p}(G,X)^b_{\breve{E}_v}$ for the smooth rigid space $\Sh_{G(\bZ_p)}(G,X)^b_{\breve{E}_v}/K^p$.

\subsection{}\label{thm:uniformization}
We now prove Theorem D.
\begin{thm*}
  We have an isomorphism
  \begin{align*}
    \breve\fU(G,X)\ra^\sim\fS\fh_{G(\bZ_p)}(G,X)^b
  \end{align*}
that is compatible with the Weil descent datum and the $\sA_p(G)$-action.
\end{thm*}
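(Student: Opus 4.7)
The plan is to reduce to the Hodge type case via the connected-component machinery developed earlier in the appendix. Choose an auxiliary Hodge type Shimura datum $(G_3, X_3)$ together with an isogeny $G_3^{\der} \to G^{\der}$ over $\bZ_{(p)}$ inducing an isomorphism $(G_3^{\ad}, X_3^{\ad}) \cong (G^{\ad}, X^{\ad})$, as in Kisin's abelian-type construction \cite{Kis10}, and extend $G_3$ to a reductive group scheme over $\bZ_{(p)}$ so that $(G_3, X_3)$ is of unramified Hodge type.

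First, I would establish the Hodge type case. Theorem \ref{ss:Kimspaces} identifies the integral local Shimura variety $\breve\fM_{G_{3,\bZ_p}, b_3, \mu_{3,\bC_p}}$ with Kim's Rapoport--Zink formal scheme $\mathtt{RZ}_{G_3, b_3}$ compatibly with the Weil descent datum and the $(G_3)_{b_3}(\bQ_p)$-action. Combining this with Kim's basic $p$-adic uniformization theorem for unramified Hodge type Shimura varieties at hyperspecial level yields an isomorphism
\begin{align*}
\breve\fU(G_3, X_3) \ra^\sim \fS\fh_{G_3(\bZ_p)}(G_3, X_3)^b
\end{align*}
of formal schemes over $\breve\cO_{E_{3,v_3}}$ compatible with the Weil descent datum and the $\sA_p(G_3)$-action. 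Restricting to a connected component gives an equivariant isomorphism $\breve\fU(G_3, X_3)^+ \ra^\sim \fS\fh_{G_3(\bZ_p)}(G_3, X_3)^{b,+}$ for the stabilizer group $\sE_p(G_3^{\der})$.

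Next, I would bootstrap to the abelian type case. Theorem \ref{ss:mainequality} presents the uniformizing side as $\breve\fU(G, X) \cong \ul{\sA_p(G)} \times^{\ul{\sA_p(G_3)^\circ}} \breve\fU(G_3, X_3)^+$. On the Shimura variety side, Kisin's quotient construction of abelian type integral canonical models in \cite[\S 3.4]{Kis10}, combined with the Shen--Zhang analysis \cite{SZ17} of the basic locus, should produce the analogous induction
\begin{align*}
\fS\fh_{G(\bZ_p)}(G, X)^b \cong \ul{\sA_p(G)} \times^{\ul{\sA_p(G_3)^\circ}} \fS\fh_{G_3(\bZ_p)}(G_3, X_3)^{b,+}.
\end{align*}
Splicing these two inductions with the Hodge type isomorphism from the first step produces the desired map. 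Compatibility with the Weil descent datum then follows from Proposition \ref{ss:connectedcomponentsmap}, since on both sides $\Phi$ translates connected components by the image of $\mu + \vp(\mu) + \cdots + \vp^{r-1}(\mu)$ in $\sA_p(G)$; compatibility with the $\sA_p(G)$-action is built into the induction.

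The main obstacle will be verifying that these two induction formulas genuinely agree under the Hodge type isomorphism --- in particular, that the $\sA_p(G_3)^\circ$-equivariance of $\breve\fU(G_3, X_3)^+ \ra^\sim \fS\fh_{G_3(\bZ_p)}(G_3, X_3)^{b,+}$ matches on both sides. The key input will be that connected components of $\fS\fh_{G(\bZ_p)}(G, X)^b$ agree with those of its special fiber, together with Kisin's connected-component analysis in \cite[(3.7.4)]{Kis17} and \cite[(3.6.10)]{Kis17}, following the pattern used in the proof of Lemma \ref{ss:actualsurjectionindependence}.
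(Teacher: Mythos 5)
Your proposal follows essentially the same route as the paper's proof: establish the Hodge type case via Theorem \ref{ss:Kimspaces} and Kim's uniformization theorem, restrict to a connected component, and bootstrap to abelian type by matching the induction formula from Theorem \ref{ss:mainequality} against the Shen--Zhang description of the basic locus, with $\sE_p$-equivariance handling both the $\sA_p(G)$-action and the Weil descent datum. The one detail worth flagging is that you need to track the superscript on $\sE_p^s$ carefully — the auxiliary datum $(G_3,X_3)$ must be chosen so that $r_3\mid r$ (this is available from \cite[(4.6.6)]{Kis17}), and the relevant equivariance is for $\sE_p^r(G_3^{\der})$, not $\sE_p^{r_3}(G_3^{\der})$, since the Weil descent on the two sides is over $\breve\cO_{E_v}$.
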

\begin{proof}
  When $(G,X)$ is of Hodge type, this follows from Theorem \ref{ss:Kimspaces} and \cite[Theorem 4.7]{Kim18b}. Next, consider $(G,X)$ of abelian type. Then there exists a reductive group scheme $G_3$ over $\bZ_{(p)}$ with geometrically connected fibers, a Shimura datum $(G_3,X_3)$ of Hodge type, and an isogeny $G_3\ra G$ over $\bZ_{(p)}$ that induces an isomorphism $(G^{\ad}_3,X^{\ad}_3)\ra^\sim(G^{\ad},X^{\ad})$ such that $r_3$ divides $r$ \cite[(4.6.6)]{Kis17}. Combining \cite[2.1.16]{Del79}, \cite[(2.2.4)]{Kis10}, and \cite[(3.3.3)]{Kis10} shows that the connected components morphism of $\sS_{G(\bZ_p)}(G_3,X_3)_{\breve\cO_{E_{3v}}}$ is of the form
  \begin{align*}
   \pi_0:\sS_{G(\bZ_p)}(G_3,X_3)_{\breve\cO_{E_{3v}}}\ra\ul{\sA_p(G_3)^\circ\bs\sA_p(G_3)}.
  \end{align*}
Moreover, \cite[(3.6.10)]{Kis17} yields a commutative square
\begin{align*}
  \xymatrix{\fS\fh_{G_3(\bZ_p)}(G_3,X_3)^b\ar[r] & \sS_{G_3(\bZ_p)}(G_3,X_3)_{\breve\cO_{E_{3v}}}\ar[d]^-{\pi_0}\\
  \breve\fU(G_3,X_3)\ar[r]^-{\fc_{G_3}}\ar[u]^[left]{\sim} & \ul{\sA_p(G_3)^\circ\bs\sA_p(G_3)}.}
\end{align*}
Taking fibers gives us an $\sE_p^r(G^{\der}_3)$-equivariant isomorphism
\begin{align*}
\breve\fU(G_3,X_3)^+\ra^\sim\fS\fh_{G_3(\bZ_p)}(G_3,X_3)^{b,+},
\end{align*}
where $\fS\fh_{G_3(\bZ_p)}(G_3,X_3)^{b,+}$ denotes the intersection of $\fS\fh_{G_3(\bZ_p)}(G_3,X_3)^b$ with a connected component of $\sS_{G_3(\bZ_p)}(G_3,X_3)_{\breve\cO_{E_{3v}}}$. Theorem \ref{ss:mainequality} shows that we have an $\sA_p(G)\times\Phi^\bZ$-equivariant isomorphism
\begin{align*}
\breve\fU(G,X)\ra^\sim\ul{\sA_p(G)}\times^{\ul{\sA_p(G_3)^\circ}}\breve\fU(G_3,X_3)^+\ra^\sim\ul{\sA_p(G)}\times^{\ul{\sA_p(G_3)^\circ}}\fS\fh_{G_3(\bZ_p)}(G_3,X_3)^{b,+},
\end{align*}
and \cite[p.~183]{SZ17} implies that we have an $\sA_p(G)\times\Phi^{\bZ}$-equivariant isomorphism
\begin{gather*}
\ul{\sA_p(G)}\times^{\ul{\sA_p(G_3)^\circ}}\fS_{G_3(\bZ_p)}(G_3,X_3)^{b,+}\ra^\sim\fS\fh_{G(\bZ_p)}(G,X)^b.\qedhere
\end{gather*}
\end{proof}
\bibliographystyle{habbrv}
\bibliography{biblio}
\end{document}